\let\OLDthebibliography\thebibliography
\renewcommand\thebibliography[1]{
  \OLDthebibliography{#1}
  \setlength{\parskip}{0pt}
  \setlength{\itemsep}{0pt plus 0.3ex} }
\numberwithin{equation}{section}
\theoremstyle{plain}
\newtheorem{theorem}{Theorem}[section]
\newtheorem{proposition}[theorem]{Proposition}
\newtheorem{lemma}[theorem]{Lemma}
\theoremstyle{definition}
\newenvironment{remark}{\pushQED{\qed} \remarkbase}{\popQED\endremarkbase}
\renewcommand{\Im}{\mathrm{Im}\,}
\renewcommand{\Re}{\mathrm{Re}\,}
\newcommand{\N}{{\mathbb N}}
\newcommand{\R}{{\mathbb R}}
\newcommand{\C}{{\mathbb C}}
\newcommand{\Z}{\mathbb Z}
\newcommand{\T}{{\mathbb T}}
\renewcommand{\S}{{\mathbb S}}
\newcommand{\mE}{\mathcal{E}}
\newcommand{\mF}{\mathcal{F}}
\newcommand{\mH}{\mathcal{H}}
\newcommand{\mJ}{\mathcal{J}}
\newcommand{\mL}{\mathcal{L}}
\newcommand{\mM}{\mathcal{M}}
\newcommand{\mT}{\mathcal{T}}
\renewcommand{\a}{\alpha}
\newcommand{\g}{\gamma}
\renewcommand{\d}{\delta}
\newcommand{\e}{\varepsilon}
\newcommand{\ph}{\varphi}
\newcommand{\lm}{\lambda}
\newcommand{\Om}{\Omega}
\newcommand{\om}{\omega}
\newcommand{\s}{\sigma}
\renewcommand{\th}{\vartheta}
\newcommand{\la}{\langle}
\newcommand{\ra}{\rangle}
\newcommand{\pa}{\partial}
\renewcommand{\div}{\mathrm{div}\,}
\newcommand{\grad}{\nabla}
\newcommand{\pois}{\mM}
\newcommand{\res}{\mathrm{res}}
\title{Liquid drop with capillarity and rotating traveling waves}
\author{\normalsize{Pietro Baldi, Vesa Julin, Domenico Angelo La Manna}}
\date{} 
\begin{document}

\maketitle

\noindent
\textbf{Abstract.} 
We consider the free boundary problem for a 3-dimensional, incompressible, irrotational liquid drop 
of nearly spherical shape with capillarity. 
We study the problem from the beginning, 
extending some classical results from the flat case (capillary water waves) to the spherical geometry: 
the reduction to a problem on the boundary, its Hamiltonian structure, 
the analyticity and tame estimates for the Dirichlet-Neumann operator in Sobolev class, 
and a linearization formula for it, 
both with the method of the good unknown of Alinhac
and by a geometric approach.
Then, also thanks to the analyticity of the operators involved, 
we prove the bifurcation of traveling waves, 
which are nontrivial (i.e., nonspherical) fixed profiles 
rotating with constant angular velocity. 
To the best of our knowledge, this is the first example 
of global-in-time nontrivial solutions 
of the free boundary problem for the capillary liquid drop.

\tableofcontents

\section{Introduction and main results} 
\label{sec:intro}

We consider the free boundary problem for a liquid drop with capillarity, 
a problem already considered by Lord Rayleigh \cite{Rayleigh}.  
We do not consider gravity forces; in fact, a liquid drop with capillarity
falling in the vacuum under gravity 
is described by the same system, 
as gravity can be removed from the equations 
by considering a reference frame that falls together with the drop.

We assume that the liquid drop occupies the time-dependent, open bounded region 
$\Om_t \subset \R^3$ with smooth boundary $\pa \Om_t$ for some time interval $t \in (0,T)$,  
and that the velocity vector field $u$ and the pressure $p$ are defined in $\Om_t$. 
Since the only effecting force is the surface tension, then  $\Om_t$, $u$ and $p$ satisfy 
\begin{align}
\pa_t u + u \cdot \grad u + \grad p 
& = 0 \quad \text{in} \ \Om_t, 
\label{dyn.eq.01} \\
\div u 
& = 0 \quad \text{in} \ \Om_t,
\label{div.eq.01}
\\
p & = \sigma_0 H_{\Om_t} \quad \text{on} \ \pa \Om_t, 
\label{pressure.eq.01}
\\
V_t & = \la u , \nu_{\Om_t} \ra \quad \ \text{on} \ \pa \Om_t,
\label{kin.eq.01}
\end{align}
where $\sigma_0$ is the capillarity coefficient, 
$H_{\Om_t}$ is the mean curvature of the boundary $\pa \Om_t$, 
$V_t$ is the normal velocity of the boundary, 
$\nu_{\Om_t}$ is the unit outer normal of the boundary, 
and $\la \cdot , \cdot \ra$ denotes the scalar product of vectors in $\R^3$. 
Equations \eqref{dyn.eq.01}, \eqref{div.eq.01} are 
the Euler equations of incompressible fluid mechanics, 
\eqref{pressure.eq.01} gives the pressure at the boundary 
in terms of capillarity, and \eqref{kin.eq.01} is the assumption 
that the movement of the boundary $\pa \Om_t$ 
in its normal direction is due to the movement of  the liquid particles on $\pa \Om_t$, that is, 
the velocity of $\pa \Om_t$ and the vector field $u$ must have 
the same normal component at the boundary $\pa \Om_t$.  
We call \eqref{dyn.eq.01} the dynamics equation, 
\eqref{div.eq.01} the incompressibility condition,
\eqref{pressure.eq.01} the condition for the pressure at the boundary,
and \eqref{kin.eq.01} the kinematic condition. 
All four together form the free boundary problem for the motion 
of a drop of incompressible fluid with capillarity. 
The unknowns are the domain $\Om_t$, 
the velocity vector field $u$, 
and the pressure $p$.

An important property of system \eqref{dyn.eq.01}-\eqref{kin.eq.01} 
is the conservation of the total energy 
\begin{equation}\label{eq:energy}
E(t)=\frac12 \int_{\Omega_t }|u|^2\, dx + \sigma_0 \mathrm{Area}(\pa \Omega_t), 
\end{equation}
which means that $E(t) = E(0)$ for all $t \in (0,T)$; 
this follows from a straightforward calculation. 
The total fluid mass, i.e., the volume of $\Omega_t$, 
and the velocity of the fluid barycenter are also conserved quantities. 

If the velocity $u$ is zero, then by \eqref{dyn.eq.01} and \eqref{pressure.eq.01} the mean curvature $ H_{\Om_t}$ is constant, 
and therefore by the Alexandrov theorem \cite{Alexandrov} the drop is a ball. 
Moreover, if the velocity is small (in $C^1$-sense) and the drop is uniformly $C^2$-regular,
then by \cite{CM} the drop is nearly spherical, i.e., it is a small perturbation of the ball. 
Hence, if we study solutions with small velocity, we may reduce to nearly spherical geometry. 
In particular, the drop is star-shaped.

In this paper we study the case when the vorticity is zero and 
$\Om_t$ is star-shaped with respect to the origin, i.e., 
the boundary $ \pa \Om_t$ is the graph of a radial function over the sphere. 
We thus always assume that the domain $\Om_t$, or $\Omega$ when the time plays no role, is of the form 
\begin{equation} \label{def:omega}
\pa \Om_t  =\{ (1+h(t,x)) x : x \in \S^2\} \quad \text{or} \quad 
\pa \Omega  =\{ (1+h(x))x : x \in \S^2\}, 
\end{equation}
where $h(t, \cdot) : \S^2 \to (-1,\infty)$, or $h : \S^2 \to (-1,\infty)$ when time plays no role, is the elevation function. 
In particular, $\pa \Omega$ is diffeomorphic to the sphere with diffeomorphism $\gamma : \S^2 \to \pa \Omega$,
\begin{equation} \label{def:gamma}
\gamma(x) = (1+ h(x)) x .
\end{equation}
For a time-dependent boundary, we denote  $\gamma_t : \S^2 \to \pa \Om_t$,
\begin{equation} \label{def:gamma-t}
\gamma_t(x)  = (1+ h(t,x)) x.
\end{equation}
Since the vorticity is zero and $\Om_t$ is diffeomorphic to the ball, 
there exists a velocity potential $\Phi : \Om_t \to \R$ such that 
\begin{equation}  \label{u.grad.Phi}
u = \grad \Phi \quad \text{in} \ \Om_t .
\end{equation}
Like in the case of water wave equations, we may use these assumptions to reduce 
system  \eqref{dyn.eq.01}-\eqref{kin.eq.01} into a system of two equations written in terms 
of the elevation function $h$ in \eqref{def:omega} and the function $\psi(t,\cdot) : \S^2  \to \R$ 
defined as the pullback by $\gamma_t$ of the velocity potential at the boundary,
\begin{equation}  \label{def:psi}
\psi(t,x) :=  \Phi(t, \gamma_t(x)).
\end{equation}

In order to write these two equations we need to introduce further notation. Assuming sufficient regularity to exchange partial derivatives, 
one has $\pa_t u = \grad \pa_t \Phi$ and $u \cdot \grad u = \grad (\frac12 |\grad \Phi|^2)$. 
Hence the dynamics equation \eqref{dyn.eq.01} becomes 
\begin{equation} \label{dyn.eq.02}
\pa_t \Phi + \frac12 |\grad \Phi|^2 + p  \sim 0 \quad \text{in } \Om_t,
\end{equation}
which is an equation for equivalence classes with respect to the 
equivalence relation $f \sim g$ iff $f- g$ is independent of $x$.
The incompressibility condition \eqref{div.eq.01} becomes the Laplace equation 
\begin{equation}\label{Lap.eq.02}
\Delta \Phi = 0 \quad \text{in } \Om_t.
\end{equation}
We define the Dirichlet-Neumann operator $G(h)\psi$, 
where $h$ is the elevation function as in \eqref{def:omega} 
and $\psi :\S^2 \to \R$ is a generic Dirichlet datum, as 
\begin{equation} \label{def:Dirichlet-Neumann}
G(h) \psi(x) := \langle (\nabla \Phi)(\gamma(x)), \nu_{\Omega}(\gamma(x)) \rangle
\end{equation}
at all points $\gamma(x) \in \pa \Om$, i.e., for all $x \in \S^2$,
where $\Phi:\Omega \to \R$ is the solution in $H^1(\Om)$ 
of the boundary value problem
\begin{equation}
    \label{def:Dirichlet-Neumann2}
    \Delta \Phi = 0 \,\, \text{in } \, \Omega \quad \text{and} \quad 
		\Phi(\gamma(x)) = \psi(x)  \,\, \text{for all } \, \g(x) \in \pa \Omega, 
		\ \text{i.e., for all } \, x \in \S^2. 
\end{equation}
We underline that \eqref{def:Dirichlet-Neumann} defines $G(h)\psi$ 
as the Neumann datum $\la \grad \Phi , \nu_\Om \ra|_{\pa \Om}$ 
of the harmonic extension $\Phi$ of the Dirichlet datum $\psi$, 
while, in the water wave literature, the definition of the Dirichlet-Neumann operator 
often includes a normalizing factor that makes it a self-adjoint operator. 
On the other hand, \eqref{def:Dirichlet-Neumann} 
is more natural from a geometric point of view. 
Finally, we use the notation
\begin{equation}
    \label{def:mean-curv-h}
    H(h)(x) := H_{\Omega}(\gamma(x)) \qquad \text{for } \, x \in \S^2,
\end{equation}
where $\gamma :\S^2 \to \pa \Om $ is defined in \eqref{def:gamma}. 
We derive the  formula of the mean curvature $H(h)$ in terms of the elevation function $h$  
in  Lemma \ref{lem:meancurvature} as the precise explicit formula is difficult to find in literature.  

Under the assumptions of zero vorticity and star-shapedness of $\Om_t$, 
the free boundary problem \eqref{dyn.eq.01}-\eqref{kin.eq.01} 
can be formulated as the system of two equations on $\S^2$ 
\begin{align} \label{kin.eq.08}
& \pa_t h = \frac{\sqrt{(1+h)^2 + |\grad_{\S^2} h|^2}}{1 + h} \, G(h)\psi,
\\
\label{dyn.eq.08-1}
& \pa_t \psi 
- \frac12 \Big( G(h)\psi 
+ \frac{\la \grad_{\S^2} \psi , \grad_{\S^2} h \ra}{(1+h) \sqrt{(1+h)^2 + |\grad_{\S^2} h|^2}} \Big)^2
+ \frac{|\grad_{\S^2} \psi|^2}{2(1+h)^2} 
  + \s_0 H(h) \sim 0,
\end{align} 
where $\grad_{\S^2}$ denotes the tangential gradient on $\S^2$ and, 
moreover, system \eqref{kin.eq.08}, \eqref{dyn.eq.08-1} is a Hamiltonian system.
In the flat case $x \in \R^2$ or $x \in \T^2$, this is an old and well-known observation  
\cite{Zakharov.1968}, \cite{Craig.Sulem}.  In  the spherical case $x \in \S^2$ the equivalence of system  \eqref{dyn.eq.01}-\eqref{kin.eq.01}  and system \eqref{kin.eq.08}, \eqref{dyn.eq.08} 
is proved   in \cite{Beyer.Gunther, Shao.Longtime} but we give the argument in order to be self-contained, 
see Proposition \ref{prop:derivation-system}. 
The Hamiltonian  structure of  \eqref{kin.eq.08}, \eqref{dyn.eq.08}  is proved in  \cite{Beyer.Gunther}; 
we also prove it with a slightly more general argument, 
see Proposition \ref{prop:quasi.Ham} and Lemma \ref{lemma:Darboux}. 
The equation \eqref{dyn.eq.08-1} holds  for equivalence classes, 
because $\Phi(\cdot, t)$ and $\psi(\cdot, t)$ are defined only up to a constant.  
In practice, we may always choose a representative $\tilde \psi  \sim \psi$ 
for which \eqref{dyn.eq.08} is an equation with any fixed constant on the right-hand-side. 
The most convenient choice is
\begin{equation}
\label{dyn.eq.08}
\pa_t \psi 
- \frac12 \Big( G(h)\psi 
+ \frac{\la \grad_{\S^2} \psi , \grad_{\S^2} h \ra}{(1+h) \sqrt{(1+h)^2 + |\grad_{\S^2} h|^2}} \Big)^2
+ \frac{|\grad_{\S^2} \psi|^2}{2(1+h)^2} 
  + \s_0 H(h) = 2 \s_0,
\end{equation}
so that $(h,\psi) = (0,0)$ is a solution of \eqref{kin.eq.08} and \eqref{dyn.eq.08}. 

Our aim is to study the motion of a drop when $\Om_t$ is nearly spherical, 
i.e., when the elevation function in \eqref{def:omega} is small. 
To this aim, we first study the linearization of equations \eqref{kin.eq.08}, \eqref{dyn.eq.08}. 
The first main result of this paper is Theorem \ref{thm:shape.der.DN.op}, 
which gives an explicit formula of the shape derivative of the Dirichlet-Neumann operator 
$G(h)\psi$ defined in \eqref{def:Dirichlet-Neumann} and \eqref{def:Dirichlet-Neumann2}, 
i.e., given a function $\eta :\S^2 \to \R$, we compute a formula for 
\begin{equation} \label{eq:deri:diri-neumann}
G'(h)[\eta]\psi = \frac{d}{d\e } \Big|_{\e = 0}G(h+ \e \eta)\psi.
\end{equation}
The calculations for \eqref{eq:deri:diri-neumann} are rather heavy, 
and we give two different methods to derive the desired formula. 
First, we consider the method of the \emph{good unknown of Alinhac} 
from the theory of water waves \cite{Alazard.Metivier}, \cite{Lannes.book}
and adapt it to the nearly spherical geometry. 
Interestingly, the method does not generalize from water waves to the drop trivially, 
because the simplest extension of the diffeomorphism $\gamma$ in \eqref{def:gamma}, 
which is the homogeneous one,
generates a singularity at the origin, which we remove by introducing a smooth cut-off function. 
The introduction of this cut-off makes the adaptation of the proof in \cite{Lannes.book} 
highly nontrivial. For this reason, to prove Lemma \ref{lemma:alpha.immediately}, 
we do not follow the proof 
in \cite{Lannes.book} directly, but, instead, we employ a new elementary argument 
based on harmonic functions and transformations of the domain. 

After this we also give a completely different argument for \eqref{eq:deri:diri-neumann}, 
which relies only on  geometry. This geometric method is more direct and has the advantage that  
it can be adopted to more general setting as the method itself does not rely on the spherical symmetry. 
Then again, the fact that the reference manifold is the sphere and not the plane, as in the flat case, 
makes the calculations technically more challenging as the derivatives do not commute. 
On the other hand, the advantage of using the good unknown of Alinhac is that it does not require 
any knowledge in differential geometry and therefore we choose to give both arguments.     

The second main result of the paper is Theorem \ref{thm:tame.est.DN.op},
where we prove that the Dirichlet-Neumann operator in \eqref{def:Dirichlet-Neumann} 
depends analytically on the elevation function and prove tame estimates for it in Sobolev class. 

The key achievement of the paper is Theorem \ref{thm:bif}, where we prove the existence of traveling waves, 
i.e., non-trivial solutions of system \eqref{kin.eq.08}, \eqref{dyn.eq.08} 
corresponding to a fixed profile that is rotating around the $x_3$-axis with constant angular velocity
(clearly, by the rotation invariance of the equations, 
any other straight line through the origin could also be taken as the symmetry axis 
of the rotating profiles). 
To the best of our knowledge, this is the first existence result 
for three-dimensional capillary traveling waves on $\S^2$. 
An important consequence of Theorem \ref{thm:bif} is that it provides the first example 
of global-in-time solutions for system \eqref{kin.eq.08}, \eqref{dyn.eq.08}, 
for which, for general initial data, only local existence 
\cite{Beyer.Gunther, Coutand.Shkoller, Shao.On.the.Cauchy}, 
continuation criteria and a priori estimates \cite{Julin.La.Manna, Shatah.Zeng} 
have been proved.

We prove Theorem \ref{thm:bif} by applying the classical bifurcation theorem due to 
Crandall-Rabi\-no\-witz \cite{Crandall-Rabinowitz} for simple eigenvalues of the linearized operator. The set-up of the problem differs from the flat case and also the linearized operator is different due to the curvature of the sphere. This
leads us to analyze the solutions of certain Diophantine equation 
specific of the spherical geometry, see \eqref{dioph.eq.travelling.in.the.thm}.  
By a careful arithmetical argument using prime number factorization, 
we are able to find infinitely many choices of angular velocity producing a one-dimensional kernel of the linearized operator.  

Summarizing, the main results of the paper are: 
\begin{itemize}
\item 
Proposition \ref{prop:derivation-system} 
(reduction to system \eqref{kin.eq.08}, \eqref{dyn.eq.08}); 
 
\item 
Proposition \ref{prop:quasi.Ham} and Lemma \ref{lemma:Darboux} 
(Hamiltonian structure); 

\item
Theorem \ref{thm:shape.der.DN.op}  
(formula of the shape derivative of the Dirichlet-Neumann operator);

\item
Theorem \ref{thm:tame.est.DN.op}
(analytic dependence and tame estimate for the Dirichlet-Neumann operator);

\item
Theorem \ref{thm:bif} 
(existence of rotating traveling waves);
 
\item
Proposition \ref{prop:infinity-family} 
(existence of infinitely many simple bifurcation points).
\end{itemize}

\medskip

\emph{Related literature}. 
The Hamiltonian structure of the water wave equations 
has been proved to hold also with constant vorticity in \cite{Wahlen}.
With the method of the good unknown of Alinhac of \cite{Alazard.Metivier},
a formula for the shape derivative of the Dirichlet-Neumann operator 
is proved in \cite{Lannes.book} in the flat case.
See also the recent work \cite{Huang.Karakhanyan} in conical domains.
A corresponding paralinearization formula is in \cite{Alazard.Metivier} 
for the flat case, and in \cite{Shao.On.the.Cauchy} for $\S^2$.
The analyticity of the Dirichlet-Neumann operator as a function of the elevation function
is proved in the flat case in dimension 2 and 3 
by many authors, see, for example, 
the classical works \cite{Calderon, Coifman.Meyer}, 
the works \cite{Craig.Nicholls, Craig.Schanz.Sulem, Lannes.book}, 
and the recent papers \cite{Berti.Maspero.Ventura, Groves.Nilsson.Pasquali.Wahlen}. 
See also the related paper \cite{Alazard.Burq.Zuily1}. 
Tame estimates for the Dirichlet-Neumann operator are proved in \cite{Alazard.Delort, Lannes.book} 
for the flat case; for related estimates on $\S^2$ see \cite{Beyer.Gunther}.
The literature about traveling waves in the flat case is extensive, 
for both pure gravity and gravity-capillary case, 
for both periodic profiles and solitary waves,  
both finite and infinite depth,
with and without viscosity.  
For a comprehensive review of the existing literature we refer 
to the recent survey \cite{Haziot.et.al}; 
here we just mention the pioneering works \cite{Levicivita, Nekrasov}, 
the papers \cite{Craig.Nicholls, Craig.Schanz.Sulem, Groves.Sun, Wahlen.steady, Walsh},  
and the recent works \cite{Berti.Franzoi.Maspero, Leoni.Tice}. 
We also mention \cite{Moon.Wu}, where rotating travelling waves are obtained for a 2-dimensional drop,
and \cite{Garcia.Hassainia.Roulley} concerning another bifurcation problem in fluid dynamics on $\S^2$.
We finally report that, after the first submission of the present paper, the question of the bifurcation 
from multiple eigenvalues for the present problem has been studied in \cite{BLL}.

\bigskip

\textbf{Acknowledgements.} 
We thank Alberto Maspero for some interesting discussions. 
We also thank the referees for their careful reading 
and valuable comments that helped to improve the presentation of our results. 
This work is supported by Italian GNAMPA, 
by Italian PRIN 2022E9CF89 \emph{Geometric Evolution Problems and Shape Optimization},
by Italian PRIN 2020XB3EFL \emph{Hamiltonian and dispersive PDEs} and the Academy of Finland grant 314227.

\section{Notations and parametrization of the geometric objects} \label{sec:preliminary}

Throughout the  paper, 
LHS means ``left-hand side'', 
RHS means ``right-hand side'', 
$\N = \{ 1,2,\ldots \}$, 
and $\N_0 = \{ 0, 1, \ldots \}$.

\subsection{Notations for differential operators on surfaces}

We denote the fluid domain by $\Omega_t \subset \R^3$, 
or simply by $\Omega$ when time does not play any role, 
and assume it is star-shaped with respect to the origin. 
We may thus parametrize the boundary by the elevation (or height) function $h : \S^2 \to \R$ 
as in \eqref{def:omega}, 
and denote the associated diffeomorphism by $\gamma : \S^2 \to \pa \Omega$ as in \eqref{def:gamma}. 
We always assume without mentioning that the elevation function satisfies $h(x) >-1$ for all $x \in \S^2$ in order to \eqref{def:omega} make sense. If we assume that the Lipschitz norm of the elevation function is small, i.e., $\|h\|_{W^{1,\infty}(\S^2)}\leq c$ for some $c <1$, then we call the domain   $\Omega$ \emph{nearly spherical}.

We assume that  $\Omega$  is $C^2$-regular, which for us means that 
the elevation function $h$ is of class $C^2(\S^2)$, 
and we denote its outer unit normal by $\nu_{\Om}$, 
the mean curvature by $H_{\Om}$ and the second fundamental form by $B_{\Om}$. 
We use orientation for which $H_\Om$ is nonnegative for convex sets. 
We denote the tangent plane at $x \in \pa \Omega$ by $T_x(\pa \Omega)$, 
which we may identify with the plane in $\R^3$ 
\[
T_x(\pa \Omega) = \{ y \in \R^3 : \la y , \nu_{\Om}(x) \ra  = 0\}.
\]
We define the projection on $T_x(\pa \Omega)$ as
\begin{equation} \label{def.proj.tangent}
\Pi_{T_x(\pa \Omega)} =  I - \nu_{\Om}(x) \otimes \nu_{\Om}(x),
\end{equation}
where $I$ is the identity matrix and $\otimes$ denotes the tensor product 
(given $a,b \in \R^3$, $a \otimes b$ is the $3 \times 3$ matrix 
with entry $a_i b_j$ in the row $i$ and column $j$, 
i.e., $a \otimes b$ is the matrix product $a b^T$ of the column matrix $a$ with the row matrix $b^T$).
Note that in \eqref{def.proj.tangent} we identify the projection map 
with the matrix representing it. 
We also remark that \eqref{def.proj.tangent} is symmetric. 
We may split a given vector $a \in \R^3$ into normal and tangential components 
with respect to a fixed tangent plane $T_x(\pa \Omega)$ for $x \in \pa \Om$ as 
\begin{equation}
\label{def:normal-tangential}
a_\nu = a_{\nu_{\Om}(x)} = \la a, \nu_{\Om}(x) \ra \, \nu_{\Om}(x) \quad \text{and} \quad 
a_{\pa \Om} =  a - a_\nu  =  \Pi_{T_x(\pa \Omega)}a . 
\end{equation}
Then $|a|^2 = |a_\nu|^2 + |a_{\pa \Om}|^2$. 
For a vector field $F :\R^3 \to \R^3$, 
$F_\nu$ and $F_{\pa \Omega}$ denote the normal and tangential vector fields on $\pa \Om$. 
For a scalar function $f:\R^3 \to \R$, we define 
the normal and tangential gradient fields as
\begin{equation} \label{def:normal-tangential-2}
\nabla_\nu f(x) = \la \nabla f(x), \nu_{\Om}(x) \ra \, \nu_{\Om}(x) 
\quad \text{and } \quad 
\nabla_{\pa \Om} f(x) = \Pi_{T_x(\pa \Omega)} \nabla f(x),
\end{equation}
and we denote 
\begin{equation} \label{def.D.pa.Omega.scalar.f}
D_{\pa \Omega} f(x) = [\grad_{\pa \Omega} f(x)]^T
= [\grad f(x)]^T \Pi_{T_x(\pa \Omega)}
\end{equation} 
the transpose of the vector $\grad_{\pa \Omega} f(x)$; 
thus, $\grad_{\pa \Omega} f(x)$ is a vector of $\R^3$, i.e., a column, 
while $D_{\pa \Omega} f(x)$ is a $1 \times 3$ matrix, i.e., a row. 
Since we assume that the domain $\Omega$ is at least $C^2$-regular, 
we may extend any regular vector field $F : \pa \Omega \to \R^3$ to $\R^3$ 
and define the tangential differential at $x \in \pa \Omega$ as 
\begin{equation} \label{def:tangdiff}
D_{\pa \Omega} F(x) := DF(x) - DF(x)\nu_{\Omega}(x) \otimes \nu_{\Omega}(x) = DF(x)\Pi_{T_x(\pa \Omega)} ,
\end{equation}
where $DF(x)$ denotes the (Euclidian) differential (i.e., the Jacobian matrix) of the extension. 
Thus, $D_{\pa \Omega} F(x)$ is a $3 \times 3$ matrix, 
whose $k$-th row is $D_{\pa \Omega} F_k(x)$, 
the transpose of the tangential gradient of the $k$-th component $F_k$ 
of the vector field $F$. 
It is easy to see that definition \eqref{def:tangdiff} is independent of the chosen extension. 
We remark that this definition, 
unlike the covariant derivative,  
does not generalize to tensors, 
but, since we only deal with first order derivatives of vector fields, 
definition \eqref{def:tangdiff} is enough for us. 
For a scalar function $f : \pa \Omega \to \R$, 
using any extension of it, the tangential gradient is defined in \eqref{def:normal-tangential-2},
and we define the tangential Hessian as 
\begin{equation} \label{def:tangHess}
D_{\pa \Omega}^2 f := D_{\pa \Omega}(\nabla_{\pa \Omega} f). 
\end{equation}
We also define the Laplace-Beltrami on $\pa \Omega$ as the trace of the tangential Hessian 
\begin{equation} \label{def:Lap-Beltrami}
\Delta_{\pa \Omega} f(x) = \mathrm{Tr} \, D_{\pa \Omega}^2 f(x). 
\end{equation}
For a vector field $F : \pa \Omega \to \R^3$ 
we define the tangential divergence as the trace of the tangential differential
\begin{equation} \label{def:tangdiv}
\div_{\pa \Omega} F = \text{Tr}(D_{\pa \Omega} F)
= \div F - \la (DF) \nu_\Omega , \nu_\Omega \ra,
\end{equation}
and note that it holds 
\begin{equation} \label{Lap.is.div.grad}
\Delta_{\pa \Omega} f = \div_{\pa \Omega}  (\nabla_{\pa \Omega} f )
\end{equation}
and 
\begin{equation} \label{def:H.Om}
H_{\Om} = \div_{\pa \Omega} \nu_\Om.
\end{equation}
If $f$ is defined in a neighborhood of $\pa \Omega$, it holds 
\begin{equation} \label{eq:laplace-extension}
\Delta_{\pa \Omega} f = \Delta f - \la D^2 f \,  \nu_{\Omega}, \nu_{\Om} \ra - H_{\Omega} \la \nabla f, \nu_{\Om} \ra.   
\end{equation} 
Finally, the divergence theorem for hypersurfaces states that 
\begin{equation}  \label{div.thm.manifold}
\int_{\pa \Omega} \div_{\pa \Omega} F \, d \mathcal{H}^2 
= \int_{\pa \Omega} H_{\Om} \la F , \nu_{\Om} \ra \, d \mathcal{H}^2,
\end{equation}
where $d \mH^2$ is the 2-dimensional Hausdorff measure 
(see, e.g., \cite{Maggi, Ambrosio.Fusco.Pallara}).

When $\Om$ is the unit ball $B_1$, its boundary is the unit sphere $\S^2$, 
and we write $\grad_{\S^2}$, $\div_{\S^2}$, etc. 
instead of $\grad_{\pa B_1}$, $\div_{\pa B_1}$, etc. 
For the unit sphere, it is natural to extend a given scalar function $f: \S^2 \to \R$ 
to $\R^3 \setminus \{0\}$ in a homogeneous way as 
\begin{equation}  \label{def.mE.0.mE.1.sigma}
\mE_0 f(x) := f(\s(x)),  \quad 
\mE_1 f(x) := |x| f(\s(x)), \quad \text{where} \quad\ 
\s(x) := \frac{x}{|x|},
\end{equation}
for all $ x \in \R^3 \setminus \{ 0 \}$,
and similarly for vector fields $F : \S^2 \to \R^3$. 
Note that the gradient and Hessian of $0$-homogeneous extensions satisfy 
\begin{equation} \label{0.homog.tool}
\la \grad(\mE_0 f)(x) , x \ra = 0, \quad 
D^2 (\mE_0 f)(x) x + \grad (\mE_0 f)(x) = 0 
\end{equation}
for all $x \in \R^3 \setminus \{ 0 \}$ 
(differentiate the identity $\mE_0 f(\lm x) = \mE_0 f(x)$ 
with respect to $\lm$, and then with respect to $x_k$).  
For the unit sphere, 
the link between the tangential differential operators defined above 
and the corresponding classical differential operators for $0$-homogeneous extensions 
become particularly simple: one has 
\begin{alignat}{2}
\grad_{\S^2} f(x) & = \grad (\mE_0 f)(x), \quad &
D_{\S^2} F(x) & = D (\mE_0 F)(x), \quad 
D_{\S^2}^2 f(x) = D^2( \mE_0 f)(x) + \grad (\mE_0 f)(x) \otimes x, 
\notag \\
\Delta_{\S^2} f(x) & = \Delta (\mE_0 f)(x), \quad &
\div_{\S^2} F(x) & = \div (\mE_0 f)(x)
\label{plants.01}
\end{alignat}
for all $x \in \S^2$.
From \eqref{0.homog.tool} and \eqref{plants.01} it follows, in particular, that 
\begin{align} 
\la \nabla_{\S^2} f(x) ,  x  \ra 
& = 0, 
\label{S2.gradient.is.orthogonal.to.x}
\\
\la D^2_{\S^2} f(x) v , x \ra 
& = - \la \grad_{\S^2} f(x), v \ra
\label{plants.02}
\end{align}
for all $x \in \S^2$, all $v \in T_x(\S^2)$.

\subsection{Parametrization of the geometric objects}

Now we write the normal unit vector and the mean curvature in terms of the elevation function.
Let us first consider the diffeomorphism $\gamma:\S^2 \to \pa \Om$ in \eqref{def:gamma}, 
and take its 1-homogeneous extension 
 \begin{equation}  \label{def.tilde.h}
\mE_1 \g(x) =  x \, (1 + \mE_0 h(x)) ,
\end{equation}
defined on $\R^3 \setminus \{ 0 \}$, 
where $\mE_0, \mE_1$ are defined in \eqref{def.mE.0.mE.1.sigma}. 
Its Jacobian matrix is 
\begin{equation}  \label{Jac.tilde.gamma}
D (\mE_1 \g)  = (1 + \mE_0 h) I + x \otimes \nabla \mE_0 h.
\end{equation}
The advantage of the extension in \eqref{def.tilde.h} is that 
its Jacobian matrix $D (\mE_1 \gamma)$ is invertible, 
and its inverse can be immediately calculated by observing that, 
by \eqref{S2.gradient.is.orthogonal.to.x},  
$M = x \otimes \nabla (\mE_0 h)$ is a nilpotent matrix satisfying $M^2 = 0$. 
Thus
\begin{equation} \label{D.tilde.gamma.inv}
[D (\mE_1 \g)]^{-1} = \frac{I}{1+ \mE_0 h}  
- \frac{x \otimes \nabla \mE_0 h}{(1 + \mE_0 h)^2}.
\end{equation}
We also calculate the determinant 
\begin{equation} \label{det.D.tilde.gamma}
\det D(\mE_1 \g) = (1 + \mE_0 h)^3 
\end{equation}
and the transpose of the inverse matrix 
\begin{equation} \label{D.tilde.gamma.inv.T}
[D (\mE_1 \g)]^{-T} 
= \frac{I}{1+ \mE_0 h} 
- \frac{(\grad \mE_0 h) \otimes x}{(1 + \mE_0 h)^2} .
\end{equation}
Note that on $\S^2$ one has 
\begin{equation} \label{eq:Dgamma-on-S2}
D (\mE_1 \g) = (1 + h) I + x \otimes \nabla_{\S^2} h 
\quad \text{on } \, \S^2
\end{equation}
and
\begin{equation} \label{eq:Dgamma-T-on-S2}
[D (\mE_1 \g)]^{-T} 
= \frac{I}{1+ h} 
- \frac{(\nabla_{\S^2} h) \otimes x}{(1 + h)^2} \quad \text{on } \, \S^2.
 \end{equation}

The tangent plane $T_{\g(x)}(\g(\S^2))$ to the surface $\g(\S^2)$ 
at the point $\g(x) \in \g(\S^2)$ is
\begin{equation} \label{tangent.pa.Om}
T_{\g(x)}(\g(\S^2)) 
= \{ D_{\S^2} \g(x) v : v \in T_x(\S^2) \},
\quad  \text{for all } x \in \S^2.
\end{equation}
By \eqref{def:tangdiff}, one has 
$D_{\S^2} \gamma(x) v 
= D \tilde \gamma (x) (I - x \otimes x) v
= D \tilde \gamma (x) v$ for all $v \in T_x(\S^2)$, 
for any extension $\tilde \gamma$ of $\gamma$;
in particular, this holds for $\tilde \gamma = \mE_1 \gamma$. 
For all $v \in T_x(\S^2)$ one has 
\[
0 = \la x, v \ra 
= \la x, [D\tilde\g(x)]^{-1} D\tilde\g(x) v \ra 
= \la [D\tilde\g(x)]^{-T} x, D\tilde\g(x) v \ra, 
\quad \ \tilde \g = \mE_1 \g,
\]
where $[D\tilde\g(x)]^{-T}$ is in \eqref{eq:Dgamma-T-on-S2}. 
Hence, for $x \in \S^2$, the vector 
\begin{equation}
\label{nu.h.2}
N(\g)(x) := [D (\mE_1 \g) (x)]^{-T} x
\end{equation} 
satisfies $\la N(\g)(x) , w \ra = 0 $  for all $w \in T_{\g(x)} (\g(\S^2))$, i.e., 
it is orthogonal to the tangent plane \eqref{tangent.pa.Om}. 
It is also easy to see that it points outside the domain $\Omega$ 
(note that, for $h=0$ one has $N(\gamma)(x) = x$). 
Therefore the outward unit normal to the surface $\g(\S^2)$ 
at $\g(x)$ is, by \eqref{eq:Dgamma-T-on-S2} and \eqref{nu.h.2},
\begin{equation}  \label{nu.h}
\nu_{\Om}(\gamma(x)) 
= \frac{N(\g)(x)}{|N(\g)(x)|} = \frac{(1 + h(x)) x - \grad_{\S^2} h(x)}{\sqrt{(1 + h(x))^2 + |\grad_{\S^2} h(x)|^2}}
\end{equation}
for $x \in \S^2$. For future purpose it is convenient to introduce the notation
\begin{equation}  \label{def:J}
J = J(h) = \sqrt{(1 + h)^2 + |\grad_{\S^2} h|^2}.
\end{equation}

In order to deal with the forthcoming computation, 
in the next lemma we give some general differentiation rules. 

\begin{lemma}
\label{lem:formulas}
Assume that the domain $\Omega$ and  
the elevation function $h:\S^2 \to \R$ are as in \eqref{def:omega},  
and let $\gamma$ be as in \eqref{def:gamma}.
For any scalar function $f : \pa \Omega \to \R$,
let $\tilde f$ be its pullback by $\gamma$, namely $\tilde f(x) = f(\gamma(x))$. 
Then 
\begin{equation}
\begin{split}
(\nabla_{\pa \Omega} f)(\gamma(x)) &= \frac{\nabla_{\S^2} \tilde f(x)}{1+h} + \frac{\la \nabla_{\S^2}  \tilde f, \nabla_{\S^2} h \ra}{(1+h)J}\nu_\Om( \gamma(x) ) \\
&=  \frac{\nabla_{\S^2}  \tilde f(x)}{1+h}  - \frac{\la \nabla_{\S^2}  \tilde f, \nabla_{\S^2} h \ra}{(1+h)J^2} \nabla_{\S^2} h + \frac{\la \nabla_{\S^2}  \tilde f, \nabla_{\S^2} h \ra}{J^2} x,
\end{split}
\label{eq.1.in.lem:formulas}
\end{equation}
where $J$ is defined in \eqref{def:J}. 
For a vector field $F : \pa \Omega \to \R^3$ we denote similarly $\tilde F(x) = F(\gamma(x))$.
Then 
\begin{align}
(D_{\pa \Omega} F)(\gamma(x)) 
& = \frac{D_{\S^2} \tilde  F(x)}{1+h} 
- \frac{ (D_{\S^2} \tilde F) \, \nabla_{\S^2} h }{(1+h)J^2} \otimes \nabla_{\S^2} h 
+ \frac{ (D_{\S^2} \tilde  F) \, \nabla_{\S^2} h }{J^2} \otimes x,
\label{eq.2.in.lem:formulas}
\\
(\div_{\pa \Omega} F)(\gamma(x)) 
& = \frac{\div_{\S^2} \tilde F(x)}{1+h} 
- \frac{ \la (D_{\S^2} \tilde F) \, \nabla_{\S^2} h, \nabla_{\S^2} h \ra  }{(1+h)J^2} 
+ \frac{\la (D_{\S^2} \tilde F) \, \nabla_{\S^2} h , x \ra }{J^2}.
\label{eq.3.in.lem:formulas}
\end{align}
\end{lemma}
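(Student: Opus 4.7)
I plan to derive the scalar formula \eqref{eq.1.in.lem:formulas} from the chain rule applied to $\tilde f = f \circ \g$ together with the explicit inverse \eqref{eq:Dgamma-T-on-S2}, and then obtain the matrix and scalar formulas \eqref{eq.2.in.lem:formulas}--\eqref{eq.3.in.lem:formulas} by applying the scalar result componentwise and taking the trace.

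For \eqref{eq.1.in.lem:formulas}, I fix any $C^1$ extension $F$ of $f$ to a neighborhood of $\pa\Om$. Since $\mE_1\g$ agrees with $\g$ on $\S^2$ and $(I - x \otimes x) v = v$ for $v \in T_x(\S^2)$, the chain rule yields
\[
\la \nabla_{\S^2}\tilde f(x), v \ra
= \la \nabla F(\g(x)), D(\mE_1\g)(x) v\ra
\qquad \text{for all } v \in T_x(\S^2),
\]
which determines $[D(\mE_1\g)(x)]^T \nabla F(\g(x))$ only modulo a multiple of $x$. Inverting, I get $\nabla F(\g(x)) = [D(\mE_1\g)(x)]^{-T}\nabla_{\S^2}\tilde f(x) + \alpha\, [D(\mE_1\g)(x)]^{-T} x$ for some $\alpha \in \R$. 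The crucial point is that $[D(\mE_1\g)(x)]^{-T} x = N(\g)(x)$ is normal to $\pa\Om$ by \eqref{nu.h.2}, so the $\alpha$-term vanishes under the projection $\Pi_{T_{\g(x)}(\pa\Om)}$. Using \eqref{eq:Dgamma-T-on-S2}, the outer-product identity $(a\otimes b)w = \la b, w\ra a$, and the orthogonality $\la \nabla_{\S^2}\tilde f, x\ra = 0$ from \eqref{S2.gradient.is.orthogonal.to.x}, I then compute $[D(\mE_1\g)(x)]^{-T}\nabla_{\S^2}\tilde f(x) = \nabla_{\S^2}\tilde f(x)/(1+h)$. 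Subtracting its normal component and using $\la \nabla_{\S^2}\tilde f, \nu_\Om\ra = -\la \nabla_{\S^2}\tilde f, \nabla_{\S^2} h\ra/J$ (which follows from \eqref{nu.h}) produces the first form of \eqref{eq.1.in.lem:formulas}; expanding $\nu_\Om$ via \eqref{nu.h} yields the second form.

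For \eqref{eq.2.in.lem:formulas}, I apply the scalar formula to each component $F_k$ of $F = (F_1, F_2, F_3)$. Transposing the second form of \eqref{eq.1.in.lem:formulas} gives the $k$-th row of $(D_{\pa\Om} F)(\g(x))$ in terms of the $k$-th row of $D_{\S^2}\tilde F$; stacking the three rows and rewriting column-times-row products as outer products $a \otimes b = a b^T$ yields the claimed matrix identity, with $(D_{\S^2}\tilde F)\nabla_{\S^2} h$ playing the role of $a$. Finally, \eqref{eq.3.in.lem:formulas} follows by taking the trace of \eqref{eq.2.in.lem:formulas} and invoking the identity $\mathrm{Tr}(a\otimes b) = \la a, b\ra$. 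The only mildly delicate point in the whole argument is the extension ambiguity in the chain rule, resolved cleanly by the fact that $[D(\mE_1\g)]^{-T}$ sends $x$ to a purely normal vector; everything else is bookkeeping with the outer-product notation.
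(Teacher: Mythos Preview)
Your proof is correct and follows essentially the same route as the paper: both compute $[D(\mE_1\g)(x)]^{-T}\nabla_{\S^2}\tilde f(x) = \nabla_{\S^2}\tilde f(x)/(1+h)$ via \eqref{eq:Dgamma-T-on-S2} and \eqref{S2.gradient.is.orthogonal.to.x}, project onto the tangent plane using \eqref{nu.h}, and then deduce \eqref{eq.2.in.lem:formulas}--\eqref{eq.3.in.lem:formulas} componentwise and by trace. The only difference is in handling the extension: the paper chooses the specific $0$-homogeneous extension $f_0(x) = f(\gamma(x/|x|))$, so that $f_0 \circ \mE_1\gamma$ is itself $0$-homogeneous and the chain rule gives $\nabla(f_0\circ\mE_1\gamma) = \nabla_{\S^2}\tilde f$ on $\S^2$ with no normal ambiguity, whereas you take an arbitrary extension and dispose of the resulting $\alpha x$-ambiguity by observing that $[D(\mE_1\g)]^{-T}x = N(\g)(x)$ is normal and hence killed by the projection. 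Your version makes the extension-independence mechanism more explicit; the paper's choice makes the chain rule step slightly cleaner. Both arrive at the same computation.
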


\begin{proof}
We only need to prove \eqref{eq.1.in.lem:formulas}, 
because \eqref{eq.2.in.lem:formulas} 
follows by applying \eqref{eq.1.in.lem:formulas} to the components of $F$, 
and \eqref{eq.3.in.lem:formulas} follows from \eqref{eq.2.in.lem:formulas}
by applying the trace, since $\div_{\pa \Omega} F = \mathrm{Tr}(D_{\pa \Omega} F)$. 

To prove \eqref{eq.1.in.lem:formulas}, 
we extend $f: \pa \Omega \to \R$ 
by taking its $0$-homogeneous extension, which we denote $f_0$, 
namely we define $f_0(x) = f( \gamma (x/|x|) )$ for all $x \in \R^3 \setminus \{ 0 \}$.  
We also define $h_0 : \R^3 \setminus \{0\} \to \R$, $h_0(x) = (\mE_0 h)(x)= h(x /|x|)$. 
Note that $\la \nabla f_0 , x \ra = \la \nabla h_0 , x \ra =0$ on $\R^3 \setminus \{ 0 \}$ 
and $\nabla h_0 = \nabla_{\S^2} h$ on $\S^2$. 
We also extend $\gamma$ as $\mE_1 \g$ defined in \eqref{def.tilde.h}. 

For $\lambda>0$ it holds  $(f_0 \circ \mE_1 \g)(\lambda x) = (f_0 \circ \mE_1 \g)(x)$. Therefore for $x \in \S^2$ it holds
\[
\nabla (f_0 \circ \mE_1 \g)(x) = \nabla_{\S^2}(f_0 \circ \mE_1 \g)(x)  = \nabla_{\S^2} \tilde f(x). 
\] 
On the other hand, one has 
\[
\nabla (f_0 \circ \mE_1 \g)(x) = [D(\mE_1 \g)(x)]^T  \,  (\nabla f_0)(\mE_1 \g(x)). 
\]
Then, by \eqref{eq:Dgamma-T-on-S2} 
and the orthogonality $\la \nabla_{\S^2} \tilde f, x \ra = 0$, 
we have, for all $x \in \S^2$,
\[
(\nabla f_0)(\gamma(x)) = [D(\mE_1 \g)(x)]^{-T}\nabla_{\S^2} \tilde f(x) 
= \frac{ \nabla_{\S^2} \tilde f(x)}{1+h(x)}. 
\]
Then \eqref{eq.1.in.lem:formulas} follows from the identity 
\[
(\nabla_{\pa \Omega} f)(\gamma(x)) 
= (\nabla f_0)(\gamma(x)) - \la (\nabla f_0)(\gamma(x)), \nu_\Om(\gamma(x)) \ra \nu_\Om(\gamma(x))
\]
(see definitions \eqref{def.proj.tangent}, \eqref{def:normal-tangential-2})
and formula \eqref{nu.h}. 
\end{proof}

Note that in equation \eqref{dyn.eq.08}, as well as in \eqref{pressure.eq.01}, 
we have the mean curvature, which has an explicit formula in terms of the elevation function $h$. 
Such a formula follows from a classical calculation, 
which is, however, hard to find in literature in its complete version; 
see the recent paper \cite{Fusco.LaManna}. 
For the sake of completeness, we give the calculations in the next lemma. 

\begin{lemma}
\label{lem:meancurvature}
Assume that the domain $\Omega$ and the elevation function $h:\S^2 \to \R$ are as in \eqref{def:omega}. 
Then the parametrization of the mean curvature is
\begin{equation}\label{eq:meancurvature0}
H(h)(x) = H_{\Om } (\gamma(x)) 
= - \frac{\Delta_{\S^2} h}{(1+h)J} 
+ \frac{2}{J} 
+ \frac{ \la (D_{\S^2}^2 h) \nabla_{\S^2} h, \nabla_{\S^2} h \ra}{(1+h)J^3} 
+ \frac{ |\nabla_{\S^2} h|^2}{J^3},
\end{equation}
where $J$ is defined in \eqref{def:J}.
\end{lemma}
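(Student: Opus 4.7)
The plan is to use \eqref{def:H.Om}, $H_\Om = \div_{\pa \Omega} \nu_\Om$, and then transfer the tangential divergence from $\pa \Omega$ to $\S^2$ via Lemma \ref{lem:formulas}. Applying \eqref{eq.3.in.lem:formulas} with $F = \nu_\Om$, whose pullback by $\g$ is $\tilde \nu(x) := \nu_\Om(\g(x)) = A(x)/J(x)$ with $A(x) := (1+h(x))x - \grad_{\S^2} h(x)$ by \eqref{nu.h}, one obtains
\[
H(h)(x) \,=\, \frac{\div_{\S^2} \tilde\nu(x)}{1+h}
- \frac{\la (D_{\S^2}\tilde\nu)\, \grad_{\S^2} h,\, \grad_{\S^2} h \ra}{(1+h)J^2}
+ \frac{\la (D_{\S^2}\tilde\nu)\, \grad_{\S^2} h,\, x \ra}{J^2}.
\]

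The key observation is that $\tilde\nu$ has unit length, so tangentially differentiating $|\tilde\nu|^2 = 1$ yields $(D_{\S^2}\tilde\nu)^T \tilde\nu = 0$, i.e.\ $\la (D_{\S^2}\tilde\nu) v, \tilde\nu\ra = 0$ for every vector $v$. From $A = J\tilde\nu$ one reads off $(1+h)\,x = J\tilde\nu + \grad_{\S^2} h$; substituting this expression for $x$ into the third term above, the $\tilde\nu$-piece is killed by the orthogonality, and what remains is exactly the negative of the second term. Hence the last two terms cancel and the formula collapses to the single identity
\[
H(h)(x) \,=\, \frac{1}{1+h(x)}\, \div_{\S^2}\!\left(\frac{A}{J}\right)\!(x).
\]

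The remainder is a direct product-rule expansion. Writing $\div_{\S^2}(A/J) = \div_{\S^2} A/J - \la A, \grad_{\S^2} J\ra/J^2$, the numerator is computed using $\div_{\S^2} x = \mathrm{Tr}(I - x\otimes x) = 2$ on $\S^2$ together with \eqref{S2.gradient.is.orthogonal.to.x}, which give $\div_{\S^2} A = 2(1+h) - \D_{\S^2} h$. For $\grad_{\S^2} J$ I differentiate $J^2 = (1+h)^2 + |\grad_{\S^2} h|^2$, using the identity $\grad_{\S^2}(|v|^2) = 2(D_{\S^2} v)^T v$ applied to $v = \grad_{\S^2} h$. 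When one pairs the result with $A$, the $(1+h)x$-contribution arising from the Hessian term is $(1+h)\la D^2_{\S^2} h \cdot x, \grad_{\S^2} h\ra = 0$, because $D^2_{\S^2} h \cdot x = D(\grad_{\S^2} h)\, \Pi_{T_x\S^2}\, x = 0$ (since $\Pi_{T_x \S^2} x = 0$, consistently with \eqref{plants.02}). Collecting the surviving pieces produces
\[
\frac{\div_{\S^2}(A/J)}{1+h} = \frac{2}{J} - \frac{\D_{\S^2} h}{(1+h)J} + \frac{|\grad_{\S^2} h|^2}{J^3} + \frac{\la (D^2_{\S^2} h)\grad_{\S^2} h, \grad_{\S^2} h\ra}{(1+h)J^3},
\]
which is exactly \eqref{eq:meancurvature0}.

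The main obstacle is spotting the cancellation in the second step; a brute-force expansion of all three terms in \eqref{eq.3.in.lem:formulas} would give a rather long computation, whereas exploiting $|\tilde\nu|=1$ together with the algebraic relation $A = J\tilde\nu$ reduces the problem to computing a single tangential divergence on $\S^2$. The secondary subtlety is arithmetic with the tangential Hessian $D^2_{\S^2} h$, which is not symmetric in the ambient sense but satisfies the sphere-specific identities $D^2_{\S^2} h \cdot x = 0$ and \eqref{plants.02}; once these are handled, the remaining algebra is straightforward.
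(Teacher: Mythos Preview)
Your proof is correct but follows a somewhat different route from the paper's. The paper first factors $\nu_\Om = n/|n|$ with $n(\gamma(x)) = A(x)$ the \emph{unnormalized} normal and applies the product rule at the $\pa\Omega$ level: since $n$ points in the normal direction, the term $\la\nabla_{\pa\Omega}(1/|n|), n\ra$ drops, leaving $H_\Om = J^{-1}\div_{\pa\Omega} n$; Lemma~\ref{lem:formulas} is then applied to $n$ rather than to $\nu_\Om$, and all three terms in \eqref{eq.3.in.lem:formulas} are computed explicitly---none cancel, but each is easy because $\tilde n = A$ is polynomial in $h, \nabla_{\S^2} h$ and involves no $J$. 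You instead apply \eqref{eq.3.in.lem:formulas} directly to $\nu_\Om$ and exploit $|\tilde\nu|=1$ together with $A = J\tilde\nu$ to collapse two of the three terms, arriving at the clean intermediate identity $H(h) = (1+h)^{-1}\div_{\S^2}(A/J)$, which you then expand on $\S^2$. The two arguments are of comparable length; the paper's avoids differentiating $1/J$ inside the tangential differential, while yours isolates a formula that the paper in fact rederives independently later (see \eqref{der.U.h} in the first proof of Proposition~\ref{prop:quasi.Ham}). One minor simplification in your last step: the vanishing of the $(1+h)x$-contribution to $\la A, \nabla_{\S^2} J\ra$ follows at once from \eqref{S2.gradient.is.orthogonal.to.x}, without the detour through $D^2_{\S^2} h \cdot x = 0$.
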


\begin{proof}
Let us recall formula \eqref{nu.h} for the outer normal $\nu_{\Omega}(\gamma(x))$. 
We define the vector field $n : \pa \Omega \to \R^3$ such that, for $x \in \S^2$, 
\begin{equation} \label{def:fieldN}
n(\gamma(x)) =  \tilde n(x) = (1+h(x))x - \nabla_{\S^2} h(x). 
\end{equation}
Then $\nu_{\Omega}(y) = \ph(y) n(y)$ for $y \in \pa \Omega$, where $\ph = 1 / |n|$, and 
\[
H_{\Om} = \div_{\! \pa \Omega} \nu_{\Om} 
= \ph \, \div_{\! \pa \Omega} n + \la \nabla_{\pa \Omega} \ph, n \ra.
\]
Since $n(y)$ is in the direction of $\nu_\Om(y)$, the last term is zero. 
Also, $|n(\gamma(x))|=J$. It remains to calculate $(\div_{\! \pa \Om} n)(\gamma(x))$.  
To this aim, we apply formula \eqref{eq.3.in.lem:formulas} to the vector fields $n, \tilde n$. 
We first calculate $D_{\S^2}\tilde n$. 
By \eqref{def:fieldN}, 
\eqref{def:tangdiff}, 
\eqref{def:tangHess},
\eqref{S2.gradient.is.orthogonal.to.x},
we have 
\begin{equation} \label{eq:meancurv2}
D_{\S^2}\tilde n(x) = - D_{\S^2}^2 h + (1+h) I_{\S^2} + x \otimes  \nabla_{\S^2} h,
\end{equation}
where $I_{\S^2} = I - x \otimes x$ 
(i.e., $I_{\S^2}$ is the matrix $\Pi_{T_x(\S^2)}$ in \eqref{def.proj.tangent})
By \eqref{eq:meancurv2}, 
\eqref{def:tangdiv}, 
\eqref{def:Lap-Beltrami},
\eqref{S2.gradient.is.orthogonal.to.x},
we obtain  
\begin{align*}
\div_{\S^2} \tilde n(x) 
& = - \Delta_{\S^2}^2 h + 2(1+h),
\\
\la (D_{\S^2} \tilde n) \nabla_{\S^2} h, \nabla_{\S^2} h \ra 
& = - \la (D_{\S^2}^2 h) \nabla_{\S^2} h, \nabla_{\S^2} h \ra + (1+h) |\nabla_{\S^2} h|^2.
\end{align*}
Moreover, by \eqref{eq:meancurv2} and \eqref{plants.02}, 
\[
\la (D_{\S^2} \tilde n) \nabla_{\S^2} h, x \ra 
= - \la (D^2_{\S^2} h) \grad_{\S^2} h , x \ra + |\nabla_{\S^2} h|^2
= 2 |\nabla_{\S^2} h|^2.
\]
We use \eqref{eq.3.in.lem:formulas} and the above calculations to deduce that 
\[
\begin{split}
(\div_{\! \pa \Omega} n) (\gamma(x)) 
& = \frac{\div_{\S^2} \tilde n(x)}{1+h} 
- \frac{ \la (D_{\S^2} \tilde n) \nabla_{\S^2} h, \nabla_{\S^2} h \ra }{(1+h)J^2}  
+ \frac{\la (D_{\S^2} \tilde n) \nabla_{\S^2} h , x \ra }{J^2} \\
& = - \frac{\Delta_{\S^2}^2 h}{1+h} + 2 
+ \frac{ \la (D_{\S^2}^2 h) \nabla_{\S^2} h, \nabla_{\S^2} h \ra}{(1+h)J^2} 
+ \frac{ |\nabla_{\S^2} h|^2}{J^2}.
\end{split}
\]
Since $|n(\g(x))| = J$, the lemma is proved. 
\end{proof}

To conclude this subsection we parametrize  the Laplace-Beltrami operator on $\pa \Om$. 
This is needed in subsection \ref{subsec:proof.via.geometric.argument}.

 \begin{lemma}
\label{lem:LapBel}
Assume that the domain $\Omega$ and the  elevation function $h:\S^2 \to \R$ are as in \eqref{def:omega}. Let $f: \pa \Om \to \R$ and  denote  $\tilde f(x) = f(\gamma(x))$. The parametrization of the Laplace-Beltrami operator is
\[
(\Delta_{\pa \Om } f) (\gamma(x)) = \frac{\Delta_{\S^2}\tilde f}{(1+h)^2} 
- \frac{\la (D_{\S^2}^2 \tilde f) \nabla_{\S^2} h, \nabla_{\S^2} h\ra}{(1+h)^2J^2} 
- 2 \frac{\la \nabla_{\S^2} \tilde f, \nabla_{\S^2} h\ra}{(1+h) J^2} 
+ H(h) \frac{\la \nabla_{\S^2} \tilde f, \nabla_{\S^2} h\ra}{(1+h) J}, 
\]
where $J$ is defined in \eqref{def:J} 
and the mean curvature $H(h) = H_{\Om } (\gamma(x))$ is calculated in Lemma \ref{lem:meancurvature}. 
\end{lemma}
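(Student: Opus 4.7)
The plan is to apply the generic extension identity \eqref{eq:laplace-extension} with a carefully chosen extension of $f$ to a neighborhood of $\pa\Omega$. Exploiting the spherical geometry, I would take the $0$-homogeneous extension of $\tilde f$,
\[
f_0(y) := \tilde f(y/|y|), \qquad y \in \R^3 \setminus \{0\}.
\]
Since $\gamma(x)/|\gamma(x)| = x$ on $\S^2$, this satisfies $f_0(\gamma(x)) = \tilde f(x) = f(\gamma(x))$, so $f_0$ is a legitimate extension of $f$ near $\pa\Omega$, and \eqref{eq:laplace-extension} gives
\[
(\Delta_{\pa\Omega} f)(\gamma(x)) = \Delta f_0(\gamma(x)) - \la D^2 f_0(\gamma(x))\, \nu_\Omega, \nu_\Omega \ra - H(h)(x)\, \la \nabla f_0(\gamma(x)), \nu_\Omega \ra.
\]

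The main payoff of using the $0$-homogeneous extension is clean rescaling: $\nabla f_0$ is $(-1)$-homogeneous and $D^2 f_0$, $\Delta f_0$ are $(-2)$-homogeneous. Combined with \eqref{plants.01} (which gives $\nabla f_0|_{\S^2} = \nabla_{\S^2}\tilde f$, $\Delta f_0|_{\S^2} = \Delta_{\S^2}\tilde f$ and $D^2 f_0|_{\S^2} = D_{\S^2}^2\tilde f - \nabla_{\S^2}\tilde f \otimes x$), evaluating at $\gamma(x) = (1+h)x$ yields
\[
\nabla f_0(\gamma(x)) = \tfrac{\nabla_{\S^2}\tilde f}{1+h}, \quad \Delta f_0(\gamma(x)) = \tfrac{\Delta_{\S^2}\tilde f}{(1+h)^2}, \quad D^2 f_0(\gamma(x)) = \tfrac{1}{(1+h)^2}\bigl(D_{\S^2}^2\tilde f - \nabla_{\S^2}\tilde f \otimes x\bigr).
\]

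I would then substitute $\nu_\Omega(\gamma(x)) = J^{-1}\bigl((1+h)x - \nabla_{\S^2} h\bigr)$ from \eqref{nu.h} and expand. The simplifying tools are the orthogonalities $\la x, \nabla_{\S^2}\tilde f\ra = \la x, \nabla_{\S^2} h\ra = 0$, the identity $(D_{\S^2}^2\tilde f)x = 0$ on $\S^2$ (which follows from \eqref{plants.01} together with \eqref{0.homog.tool}), and the relation $\la (D_{\S^2}^2\tilde f)\nabla_{\S^2} h, x\ra = -\la \nabla_{\S^2}\tilde f, \nabla_{\S^2} h\ra$ provided by \eqref{plants.02}. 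After the cancellations, $\la \nabla f_0(\gamma(x)), \nu_\Omega \ra$ collapses to $-\la \nabla_{\S^2}\tilde f, \nabla_{\S^2} h\ra / ((1+h)J)$ (so that multiplication by $-H(h)$ produces the last term of the claimed formula), while the Hessian term reduces to
\[
\la D^2 f_0(\gamma(x))\, \nu_\Omega, \nu_\Omega \ra = \frac{\la (D_{\S^2}^2\tilde f)\nabla_{\S^2} h, \nabla_{\S^2} h\ra}{(1+h)^2 J^2} + \frac{2\, \la \nabla_{\S^2}\tilde f, \nabla_{\S^2} h\ra}{(1+h) J^2}.
\]
Plugging everything into the displayed identity above gives the lemma.

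The one technically delicate spot is the bookkeeping in this Hessian expansion: writing $N := (1+h)x - \nabla_{\S^2} h$, both the $-\nabla_{\S^2}\tilde f \otimes x$ correction in $D^2 f_0|_{\S^2}$ and the cross-product $\la (D_{\S^2}^2\tilde f)\nabla_{\S^2} h, x\ra$ inside $\la (D_{\S^2}^2\tilde f)\, N, N\ra$ independently contribute a copy of $(1+h)\la \nabla_{\S^2}\tilde f, \nabla_{\S^2} h\ra$, which is what produces the factor $2$ in front of the mixed term. A more pedestrian alternative would be to use $\Delta_{\pa\Omega} f = \div_{\pa\Omega}(\nabla_{\pa\Omega} f)$ and apply Lemma \ref{lem:formulas} directly to the vector field $\nabla_{\pa\Omega} f$, but this route would not naturally reproduce the $H(h)$ term and would instead require a separate comparison with Lemma \ref{lem:meancurvature}.
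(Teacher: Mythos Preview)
Your proof is correct and takes a genuinely different route from the paper. You use the extension identity \eqref{eq:laplace-extension} with the $0$-homogeneous extension $f_0$, so that homogeneity instantly rescales $\Delta f_0$, $\nabla f_0$, $D^2 f_0$ at $\gamma(x)=(1+h)x$ and the orthogonality relations \eqref{0.homog.tool}, \eqref{plants.02} finish the algebra. The paper instead writes $\Delta_{\pa\Om} f = \div_{\pa\Om}(\nabla_{\pa\Om} f)$, splits $\nabla_{\pa\Om} f$ via \eqref{eq.1.in.lem:formulas} into a tangential piece $F_1$ and a normal piece $F_2=\varphi\,\nu_\Om$, and then computes $\div_{\pa\Om} F_1$ through \eqref{eq.3.in.lem:formulas}. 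Your approach is shorter and exploits the spherical symmetry more directly; the paper's approach stays closer to the machinery of Lemma~\ref{lem:formulas} already set up. One small correction to your closing remark: the paper's route \emph{does} produce the $H(h)$ term naturally, because $\div_{\pa\Om}(\varphi\,\nu_\Om)=\varphi H_\Om$ falls out of the normal component $F_2$ without any appeal to Lemma~\ref{lem:meancurvature}.
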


\begin{proof}
By \eqref{eq.1.in.lem:formulas} and \eqref{nu.h}, it holds 
\[
(\nabla_{\pa \Omega} f) (\gamma(x)) = \underbrace{\frac{\nabla_{\S^2} \tilde f(x)}{1+h} }_{= \tilde F_1(x)}+  \underbrace{\frac{\la \nabla_{\S^2}  \tilde f, \nabla_{\S^2} h \ra}{(1+h)J}\nu_\Om( \gamma(x) ) }_{= \tilde F_2(x)}.
\]
Recalling \eqref{Lap.is.div.grad}, 
we have to calculate 
$\Delta_{\pa \Om} f =  \div_{\pa \Om}(\nabla_{\pa \Omega} f) = \div_{\pa \Om}(F_1 + F_2)$, 
where $F_1, F_2$ are defined by the equality 
$F_i(\gamma(x)) = \tilde F_i(x)$, $i=1,2$, for $x \in \S^2$. 
We immediately notice that, since $F_2$ is of the form $\varphi \nu_{\Om}$ 
for a scalar function $\varphi$, then, by \eqref{def:tangdiv}, \eqref{def:H.Om}, 
we have $\div_{\pa \Om}(\varphi \nu_{\Om}) = \varphi H_{\Om}$, 
and therefore
\begin{equation}
\label{eq:LapBel-1}
(\div_{\pa \Om}F_2)(\gamma(x)) = H(h) \frac{\la \nabla_{\S^2} \tilde f, \nabla_{\S^2} h\ra}{(1+h) J},
\end{equation}
where $H(h)$ is defined in \eqref{def:mean-curv-h}. 
In order to parametrize $\div_{\pa \Om}F_1$, 
first, by \eqref{def:tangdiff}, \eqref{def:tangHess}, we calculate 
\[
D_{\S^2} \tilde F_1 = \frac{D_{\S^2}^2 \tilde f}{1+h} - \frac{\nabla_{\S^2} \tilde f \otimes \nabla_{\S^2} h}{(1+h)^2}.
\]
By \eqref{def:tangdiv}, \eqref{def:Lap-Beltrami}, \eqref{plants.02}, this yields 
\begin{align*}
\div_{\S^2}  \tilde F_1 & = \frac{\Delta_{\S^2} \tilde f}{1+h} 
- \frac{\la \nabla_{\S^2} \tilde f , \nabla_{\S^2} h\ra}{(1+h)^2},
\\
\la (D_{\S^2} \tilde F_1) \nabla_{\S^2} h, \nabla_{\S^2} h \ra 
& = \frac{\la (D_{\S^2}^2 \tilde f) \nabla_{\S^2} h, \nabla_{\S^2} h \ra}{1+h} 
- \frac{|\nabla_{\S^2} h|^2}{(1+h)^2} \la \nabla_{\S^2} \tilde f , \nabla_{\S^2} h\ra,
\\
\la (D_{\S^2} \tilde F_1) \nabla_{\S^2} h, x \ra 
& =  \frac{\la (D_{\S^2}^2 \tilde f) \nabla_{\S^2} h, x \ra}{1+h} 
=  - \frac{\la \nabla_{\S^2} \tilde f , \nabla_{\S^2} h\ra}{1+h}.
\end{align*}
We have then, by \eqref{eq.3.in.lem:formulas} and \eqref{def:J}, that 
\[
\begin{split}
& (\div_{\pa \Om} F_1) (\gamma(x)) \\
&= \frac{\Delta_{\S^2} \tilde f}{(1+h)^2} 
- \frac{\la \nabla_{\S^2} \tilde f , \nabla_{\S^2} h\ra}{(1+h)^3} 
- \frac{\la (D_{\S^2}^2 \tilde f) \nabla_{\S^2} h, \nabla_{\S^2} h \ra}{(1+h)^2J^2} 
+ \frac{|\nabla_{\S^2} h|^2  \la \nabla_{\S^2} \tilde f , \nabla_{\S^2} h\ra }{(1+h)^3J^2} 
- \frac{\la \nabla_{\S^2} \tilde f , \nabla_{\S^2} h\ra}{(1+h)J^2}\\
&=\frac{\Delta_{\S^2} \tilde f}{(1+h)^2} 
- \frac{\la (D_{\S^2}^2 \tilde f) \nabla_{\S^2} h, \nabla_{\S^2} h \ra}{(1+h)^2J^2} 
- \frac{ 2 \la \nabla_{\S^2} \tilde f , \nabla_{\S^2} h \ra }{ (1+h) J^2 }.
\end{split}
\]
This and \eqref{eq:LapBel-1} yield the claim. 
\end{proof}

We notice that we may define the elliptic operator $\mathcal L_h: C^2(\S^2 ) \to C(\S^2)$, 
\begin{equation}
\label{def:elliptic-operator}
\mathcal L_h[\varphi] = \Delta_{\S^2} \varphi 
- \frac{\la (D_{\S^2}^2 \varphi) \nabla_{\S^2} h, \nabla_{\S^2} h \ra}{J^2},
\end{equation}
and the Laplace-Beltrami operator in Lemma \ref{lem:LapBel} can be written as
\begin{equation}
\label{eq:Lap-Bel-2}
(\Delta_{\pa \Om} f) (\gamma(x)) 
= \frac{\mathcal L_h[\tilde f]}{(1+h)^2} 
+ \left( - \frac{\mathcal L_h[h]}{(1+h)^2J^2} + \frac{|\nabla_{\S^2} h|^2}{(1+h)J^4} \right) 
\la \nabla_{\S^2} \tilde f , \nabla_{\S^2} h\ra.
\end{equation}

\section{The water wave equations on the unit sphere} \label{sec:equation.on.sphere}

In this section we prove that  for the star-shaped liquid drop with zero vorticity the system 
of equations  \eqref{dyn.eq.01}-\eqref{kin.eq.01} is equivalent to  \eqref{kin.eq.08}, \eqref{dyn.eq.08}. 
We then show that the equations  \eqref{kin.eq.08}, \eqref{dyn.eq.08} have Hamiltonian structure. 
These results are classical for the water wave equations in the nearly flat case.

\subsection{Reduction to an equivalent problem on the unit sphere}
\label{subsec:reduction.to.S2}

In this subsection we show that the free boundary problem for the capillary liquid drop 
can be formulated as system \eqref{kin.eq.08}, \eqref{dyn.eq.08}. 
In the flat case where $x \in \R^2$ or $x \in \T^2$, this is an old and well-known observation;
we show here the analogue observation for the spherical case $x \in \S^2$.

\begin{proposition} \label{prop:derivation-system}
Under the condition of zero vorticity $\mathrm{curl} \, u = 0$ and star-shapedness of $\Om_t$, 
system \eqref{dyn.eq.01}-\eqref{kin.eq.01} is equivalent to system \eqref{kin.eq.08}, \eqref{dyn.eq.08}, 
where $\psi$ is defined in\eqref{def:psi} and the elevation function $h$ is in \eqref{def:omega}.
\end{proposition}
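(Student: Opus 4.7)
The plan is to reduce the four equations \eqref{dyn.eq.01}--\eqref{kin.eq.01} to the pair \eqref{kin.eq.08}, \eqref{dyn.eq.08} by the standard potential/Bernoulli argument, with the spherical geometry handled through formula \eqref{nu.h} and Lemma \ref{lem:formulas}. Since $\Om_t$ is star-shaped with respect to the origin, it is simply connected, so the hypothesis $\mathrm{curl}\, u = 0$ yields a velocity potential $\Phi$ with $u = \grad \Phi$ as in \eqref{u.grad.Phi}; the incompressibility condition \eqref{div.eq.01} then becomes \eqref{Lap.eq.02}. The identities $\pa_t u = \grad \pa_t \Phi$ and $u \cdot \grad u = \grad(\tfrac12 |\grad \Phi|^2)$, combined with \eqref{dyn.eq.01}, imply $\grad(\pa_t \Phi + \tfrac12 |\grad \Phi|^2 + p) = 0$, so that, after absorbing a time-dependent constant into $\Phi$, the Bernoulli relation \eqref{dyn.eq.02} holds pointwise in $\Om_t$.

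For the kinematic equation I would compute $\pa_t \g_t(x) = (\pa_t h)(t,x)\, x$, so that the normal velocity of $\pa \Om_t$ at $\g_t(x)$ is $V_t = (\pa_t h)\,\la x, \nu_{\Om_t}(\g_t(x))\ra$. Plugging in \eqref{nu.h} and using the orthogonality \eqref{S2.gradient.is.orthogonal.to.x} gives $\la x, \nu_{\Om_t}(\g_t(x))\ra = (1+h)/J$. Substituting into \eqref{kin.eq.01} and recognising the right-hand side as $G(h)\psi$ via \eqref{def:Dirichlet-Neumann} yields \eqref{kin.eq.08}.

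The dynamical equation is the technical heart. Differentiating $\psi(t,x) = \Phi(t,\g_t(x))$ in $t$ gives $\pa_t \psi = (\pa_t \Phi)(\g_t(x)) + (\pa_t h)\,\la \grad \Phi(\g_t(x)), x\ra$; I would then evaluate Bernoulli \eqref{dyn.eq.02} at $\g_t(x)$ and use \eqref{pressure.eq.01}, \eqref{def:mean-curv-h} to replace $p$ by $\s_0 H(h)$. The remaining task is to express $\la \grad \Phi, x\ra$ and $|\grad \Phi|^2$ at the boundary in terms of $G(h)\psi$, $\grad_{\S^2}\psi$, $\grad_{\S^2} h$. I would decompose $\grad \Phi = (G(h)\psi)\nu_\Om + \grad_{\pa \Om} \Phi$, with $\grad_{\pa \Om}\Phi$ supplied by Lemma \ref{lem:formulas} applied to $\tilde f = \psi$. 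Pairing with $x$ and using $\la \grad_{\S^2}\psi, x\ra = 0$ produces
\[
\la \grad \Phi, x\ra = \frac{(1+h)\,G(h)\psi}{J} + \frac{\la \grad_{\S^2}\psi, \grad_{\S^2} h\ra}{J^2},
\]
while the Pythagorean decomposition, combined with the identity $\la \grad_{\S^2}\psi/(1+h), \nu_\Om\ra = -\la \grad_{\S^2}\psi, \grad_{\S^2} h\ra/((1+h)J)$, gives
\[
|\grad \Phi|^2 = (G(h)\psi)^2 + \frac{|\grad_{\S^2}\psi|^2}{(1+h)^2} - \frac{\la \grad_{\S^2}\psi, \grad_{\S^2} h\ra^2}{(1+h)^2 J^2}.
\]
Substituting these, and using \eqref{kin.eq.08} to eliminate $\pa_t h$ from the term $(\pa_t h)\la \grad \Phi, x\ra$, the resulting identity becomes \eqref{dyn.eq.08} after recognising a completed square in $G(h)\psi$ and $\la \grad_{\S^2}\psi, \grad_{\S^2} h\ra/((1+h)J)$.

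For the converse, given $(h,\psi)$ solving \eqref{kin.eq.08}, \eqref{dyn.eq.08}, I would define $\Om_t$ by \eqref{def:omega}, take $\Phi$ to be the harmonic extension of $\psi \circ \g_t^{-1}$ in $\Om_t$, set $u = \grad \Phi$, and \emph{define} $p$ by the Bernoulli relation \eqref{dyn.eq.02}; then \eqref{div.eq.01} follows from harmonicity, \eqref{dyn.eq.01} by taking the gradient of \eqref{dyn.eq.02}, \eqref{pressure.eq.01} is forced by \eqref{dyn.eq.08} read backwards through the same algebraic decomposition, and \eqref{kin.eq.01} follows from \eqref{kin.eq.08}. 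The main obstacle throughout is the spherical algebra in the third paragraph: one must systematically rely on Lemma \ref{lem:formulas} and on the homogeneous-extension identity \eqref{S2.gradient.is.orthogonal.to.x} in place of the flat formulas that trivialise the derivation in the Euclidean water-wave setting.
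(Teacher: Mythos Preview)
Your proposal is correct and follows essentially the same route as the paper: derive \eqref{kin.eq.08} from the normal-velocity identity $V_t=(\pa_t h)(1+h)/J$, then obtain \eqref{dyn.eq.08} by differentiating $\psi=\Phi(t,\g_t(x))$, evaluating Bernoulli at the boundary, decomposing $\grad\Phi$ into normal and tangential parts via \eqref{def:Dirichlet-Neumann} and Lemma~\ref{lem:formulas}, and completing the square. The converse direction, defining $p$ through Bernoulli and reading the argument backwards, is also the paper's approach.
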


\begin{proof}
First we show that \eqref{dyn.eq.01}-\eqref{kin.eq.01} imply \eqref{kin.eq.08}, \eqref{dyn.eq.08}.  
We begin by observing that the parametrization of equation \eqref{kin.eq.01} is given by \eqref{kin.eq.08}:  
the boundary $\pa \Om_t$ in \eqref{def:omega} is described by $\gamma_t$ in \eqref{def:gamma-t},
and the normal velocity $V_t$ appearing in the left hand side of \eqref{kin.eq.01} is, by definition, 
the normal component at $\gamma_t(x) \in \pa \Om_t$ of the time derivative $\pa_t \gamma$, namely   
\begin{equation} \label{V.t.formula}
V_t(\gamma_t(x)) = \la \pa_t\gamma_t(x), \nu_{\Om_t}(\gamma_t(x)) \ra 
= \frac{(1+h)}{J} \pa_t h  \quad \text{on } \, \S^2,
\end{equation}
where we have used formula \eqref{nu.h} for the unit normal,  
the orthogonality property \eqref{S2.gradient.is.orthogonal.to.x}
and the definition \eqref{def:J} of $J$. 
On the other hand, the term $\la u , \nu_\Om \ra$ in the right hand side of \eqref{kin.eq.01} 
can be written by using assumption \eqref{u.grad.Phi} 
and the definition \eqref{def:Dirichlet-Neumann} of the Dirichlet-Neumann operator, 
i.e., 
\begin{equation} \label{u.nu.formula}
\la u(t, \gamma_t(x)) , \nu_{\Om_t}(\gamma_t(x)) \ra 
= \la (\grad \Phi)(t, \gamma_t(x)) , \nu_{\Om_t}(\gamma_t(x)) \ra 
= G(h) \psi(x)
\end{equation}
for all $x \in \S^2$. 
Thus \eqref{kin.eq.01} becomes \eqref{kin.eq.08}.

We proceed to derive \eqref{dyn.eq.08}. 
By continuity, equation \eqref{dyn.eq.02} also holds on the boundary $\pa \Om_t$ 
and, under the pressure condition \eqref{pressure.eq.01} and by choosing a suitable representative, we have
\begin{equation} \label{matita.07.bis}
\pa_t \Phi + \frac12 |\grad \Phi|^2 + \s_0 H_{\Om_t} = 2 \s_0
\quad \text{on } \pa \Om_t. 
\end{equation} 
We recall that $\Phi, \psi$ satisfy identity \eqref{def:psi}. 
By differentiating \eqref{def:psi} with respect to time, we have  
\[ 
(\pa_t \Phi)(t, \gamma_t(x)) + \la (\grad \Phi)(t,\gamma_t(x)) , \pa_t \gamma_t(x) \ra = \pa_t \psi(t,x)
\] 
for all $ x \in \S^2$. Therefore   \eqref{matita.07.bis} becomes
\[
\pa_t \psi(t,x) - \la (\grad \Phi)(t,\gamma_t(x)) , \pa_t \gamma_t(x) \ra
+ \frac12 |(\grad \Phi)(t,\gamma_t(x))|^2 + \s_0 H(h)(x) = 2 \s_0 
\]
for all $x \in \S^2$, 
where $H(h)(x)$ is defined in \eqref{def:mean-curv-h}. 
We then split the gradient $\grad \Phi$ into its normal $\nabla_\nu \Phi$ 
and tangential $\nabla_{\pa \Om} \Phi$ part defined in \eqref{def:normal-tangential-2}. 
Using the definition of the Dirichlet-Neumann operator \eqref{def:Dirichlet-Neumann}, we have
\begin{equation} \label{eq:normal-part-Phi}
(\nabla_\nu \Phi)(t,\gamma_t(x)) = G(h)\psi(x) \nu_{\Om}(\gamma_t(x)).
\end{equation} 
Therefore we deduce that
\[
\begin{split}
    \pa_t \psi(t,x) -  G(h)\psi \la \pa_t \gamma_t,  \nu_{\Om_t}(\gamma_t(x)) \ra  &- \la \grad_{\pa \Om_t} \Phi, \pa_t \gamma_t \ra\\
    &+ \frac12 ( G(h)(\psi) )^2 +\frac12 | \grad_{\pa \Om_t} \Phi |^2 + \s_0 H(h)(x) =  2 \s_0. 
\end{split}
\]
Now $\la \pa_t \gamma_t,  \nu_{\Om_t}(\gamma_t) \ra = G(h)\psi$ 
by \eqref{V.t.formula}, \eqref{u.nu.formula} and \eqref{kin.eq.01}.  
Hence 
\begin{equation} \label{matita.09}
\pa_t \psi - \frac12 ( G(h)\psi )^2   - \la \grad_{\pa \Om_t} \Phi, \pa_t \gamma_t \ra
+\frac12 | \grad_{\pa \Om_t} \Phi |^2 + \s_0 H(h) =  2 \s_0 . 
\end{equation}

We write the tangential part $(\grad_{\pa \Om_t} \Phi)(t, \gamma_t(x))$ 
using \eqref{eq.1.in.lem:formulas} and \eqref{def:psi}, and obtain
\begin{equation} \label{eq:tangential-part-Phi}
(\nabla_{\pa \Omega} \Phi)(t, \gamma_t(x)) 
= \frac{\nabla_{\S^2} \psi}{1+h} 
- \frac{\la \nabla_{\S^2} \psi, \nabla_{\S^2} h \ra}{(1+h)J^2} \nabla_{\S^2} h 
+ \frac{\la \nabla_{\S^2} \psi, \nabla_{\S^2} h \ra}{J^2} x,
\end{equation}
with $x \in \S^2$. By \eqref{eq:tangential-part-Phi}, we calculate
\begin{equation} \label{eq:2.2-1}
| (\nabla_{\pa \Omega} \Phi)(t, \gamma_t(x)) |^2 
= \frac{|\nabla_{\S^2}  \psi|^2}{(1+h)^2} - \frac{\la \nabla_{\S^2} \psi, \nabla_{\S^2} h \ra^2}{(1+h)^2J^2}.
\end{equation} 
Moreover, since  $\pa_t \gamma_t = x \pa_t h $, we have 
\begin{equation} \label{eq:2.2-2}
\la (\nabla_{\pa \Omega} \Phi)(t, \gamma_t(x)) , \pa_t \gamma_t(x) \ra 
= \frac{\la \nabla_{\S^2}  \psi, \nabla_{\S^2} h \ra}{J^2}  \, \pa_t h
= \frac{\la \nabla_{\S^2}  \psi, \nabla_{\S^2} h \ra}{(1+h)J} G(h)\psi,
\end{equation} 
where in the last equality we have used \eqref{kin.eq.08}. 
Combining \eqref{matita.09} with \eqref{eq:2.2-1} and \eqref{eq:2.2-2} yields 
\[
\pa_t \psi - \frac12 ( G(h)\psi )^2 
- \frac{\la \nabla_{\S^2} \psi, \nabla_{\S^2} h \ra}{(1+h)J} G(h) \psi  
- \frac12 \frac{\la \nabla_{\S^2} \psi, \nabla_{\S^2} h \ra^2}{(1+h)^2J^2} 
+ \frac12 \frac{|\nabla_{\S^2} \psi|^2}{(1+h)^2} + \s_0 H(h) = 2 \s_0. 
\]
Then \eqref{dyn.eq.08} follows by noticing that 
the three terms with minus sign form a square. 

Now we prove that \eqref{kin.eq.08}, \eqref{dyn.eq.08} imply \eqref{dyn.eq.01}-\eqref{kin.eq.01} 
with $\mathrm{curl}\, u = 0$. 
Suppose that two functions $h$ and $\psi$, defined on  $(0,T) \times \S^2$, satisfy 
the kinematic equation \eqref{kin.eq.08} and the dynamics equation \eqref{dyn.eq.08}. 
Define the set $\Om_t$ as in \eqref{def:omega} and $\Phi(t,\cdot)$ in $\Om_t$ 
as the solution of the Laplace problem \eqref{def:Dirichlet-Neumann2}.
Then $\Phi(t,\cdot)$ satisfies the incompressibility condition $\Delta \Phi=0$ in $\Om_t$. 
By \eqref{kin.eq.08} and \eqref{def:Dirichlet-Neumann}, 
equation \eqref{kin.eq.01} is also satisfied. 
From \eqref{dyn.eq.08}, using \eqref{kin.eq.08}, we obtain  \eqref{matita.07.bis}. 
Now we define $p$ on the closure of $\Om_t$ as 
\begin{equation} \label{def.tilde.p.using.dyn.eq.02}
p := - \pa_t \Phi - \frac12 |\grad \Phi|^2   + 2\s_0
\quad \ \text{in } \overline{\Om}_t = \Om_t \cup \pa \Om_t.
\end{equation}
Then the dynamics equation \eqref{dyn.eq.02} in the open domain $\Om_t$ trivially holds. 
From \eqref{def.tilde.p.using.dyn.eq.02} at the boundary $\pa \Om_t$ 
and \eqref{matita.07.bis} (which is an identity for points of the boundary $\pa \Om_t$)
we deduce that $p = \s_0 H_{\Om_t}$ on $\pa \Om_t$, i.e., \eqref{pressure.eq.01}. 
\end{proof}

\subsection{Hamiltonian structure}

In this subsection we prove that equations \eqref{kin.eq.08} and \eqref{dyn.eq.08}  
form a Hamiltonian system. 
Similarly as for Proposition \ref{prop:derivation-system}, 
also the Hamiltonian structure of the water wave system 
is an old and well-known result in the flat case $x \in \R^2$ or $x \in \T^2$, 
which goes back to \cite{Zakharov.1968, Craig.Sulem}.
In this subsection we prove the analogue result for the spherical case $x \in \S^2$. 

We remark that, concerning the Hamiltonian structure, 
with the spherical geometry there is a difference with respect to the flat case:
while on $\R^2$ or $\T^2$ the elevation $h$ and the value $\psi$ of the velocity potential at the boundary
are Darboux coordinates of the system, on $\S^2$ this is not true.  
However, $(h,\psi)$ fail to be Darboux coordinates 
only because of a ``wrong'' multiplicative factor $(1 + h)^2$
(see Lemma \ref{prop:quasi.Ham} below), 
and it is not difficult to obtain Darboux coordinates with a simple change of coordinate
(Lemma \ref{lemma:Darboux}). 

\medskip

We start with writing the energy \eqref{eq:energy} in terms of $h, \psi$. 
For $\pa \Omega= \gamma(\S^2)$, where $\gamma$ is in \eqref{def:gamma}, 
the area formula gives 
\begin{equation} \label{def.U.h}
\mathrm{Area}(\pa \Om) = \int_{\S^2} (1+h)\sqrt{(1+h)^2+|\nabla_{\S^2} h|^2}\, d \sigma =: U(h),
\end{equation}
where the last identity defines $U(h)$. 
Recall that, given $\psi :\S^2 \to \R$, we denote $\Phi: \Omega \to \R$ the solution of 
problem \eqref{def:Dirichlet-Neumann2} 
The function $\gamma$ is defined on $\S^2$, 
and its 1-homogeneous extension $\mE_1 \gamma$ in \eqref{def.tilde.h} 
is defined in $\R^3 \setminus \{ 0\}$.
If $h$ is smooth, then $\mE_1 \g$ is smooth in $\R^3 \setminus \{ 0\}$, 
and it can be extended to a Lipschitz continuous map in the whole $\R^3$, 
mapping $0$ to itself and $B_1$ onto $\Om$ bijectively. 
However, if $h$ is not constant, then $\mE_1 \gamma$ is only Lipschitz around the origin.
Therefore we introduce a radial smooth cut-off function around the origin.  
Let 
\begin{equation} \label{def.chi.rho.cut.off}
\chi(x) = \rho(|x|) \ \ \text{for } x \in \R^3, \quad 
\rho \in C^\infty(\R), \quad 
\rho = 1 \ \ \text{in } [\tfrac{1}{2}, \infty), \quad  
\rho = 0 \ \ \text{in } (-\infty, \tfrac{1}{4}].
\end{equation}
To deal with quantitative bounds, it is convenient to fix $\rho$ such that, say, 
\begin{equation} \label{est.rho.cut.off}
0 \leq \rho \leq 1, \quad 
0 \leq \rho' \leq 8.
\end{equation} 
We define 
\begin{equation} \label{def.gamma.B}
\gamma_{B} : \R^3 \to \R^3, \quad 
\gamma_{B}(x) := \big( 1 + \chi(x) \mE_0 h(x) \big) x. 
\end{equation}
Thus, $\gamma_{B} = \mE_1 \gamma$ for $|x| \geq 1/2$, 
and $\gamma_B(x) = x$ in the ball $|x| < 1/4$. 
We remark that $\gamma_B$ maps $B_1$ onto $\Om$. 
Therefore the function 
\begin{equation} \label{def:Phi-punctured}
\tilde \Phi(x) = \Phi(\g_B(x)) \quad \text{for }\, x \in B_1
\end{equation}
satisfies 
\begin{equation} \label{grad.rule} 
\grad \tilde \Phi(x) = [D \gamma_B (x)]^T (\grad \Phi)(\gamma_B(x))  
\quad \text{in } B_1,
\end{equation}
and hence, by arguing as in \cite{EVANS} (section 6.3, page 320),  
it is a weak solution of the Dirichlet problem
\begin{equation} \label{punct.modif.Lap.06}
\div ( P \grad \tilde \Phi) = 0 \ \ \text{in } B_1, 
\quad \ 
\tilde \Phi = \psi \ \ \text{on } \S^2,
\end{equation}
where the matrix $P(x)$ is 
\begin{equation} \label{def.P} 
P(x) := \det( D \g_B (x)) [D \g_B (x)]^{-1} [D \gamma_B(x)]^{-T} 
\end{equation}
in $\R^3$. 
Adapting the calculations in 
\eqref{D.tilde.gamma.inv}, \eqref{det.D.tilde.gamma} and \eqref{D.tilde.gamma.inv.T}, 
and using the orthogonality property $\la \grad \chi, \grad h_0 \ra = 0$, 
where $h_0 := \mE_0 h$,  
one has 
\begin{align}
D \gamma_B & = (1 + \chi h_0) I + \chi x \otimes \grad h_0 + h_0 x \otimes \grad \chi,
\notag \\
(D \gamma_B)^{-1} & = \frac{I}{1 + \chi h_0} 
- \frac{\chi x \otimes \grad h_0}{(1+\chi h_0)(1+\chi_1 h_0)} 
- \frac{h_0 x \otimes \grad \chi}{(1+\chi h_0)(1+\chi_1 h_0)},
\notag \\
\chi_1 & := \chi + \la x , \grad \chi \ra.
\label{def.chi.1}
\end{align} 
From the general formula $\det(I + a \otimes b) = 1 + \la a, b \ra$, we obtain 
\[
\det( D \gamma_B ) = (1 + \chi h_0)^2 (1 + \chi_1 h_0).
\]
Thus
\begin{equation} \label{punct.P.0}
P = (1+\chi_1 h_0) I 
- \grad (\chi h_0) \otimes x - x \otimes \grad (\chi h_0)
+ \frac{|\grad (\chi h_0)|^2 x \otimes x }{ 1 + \chi_1 h_0} 
\end{equation}
for $ x \in \R^3$. 
If $h$ is Lipschitz continuous with a norm $\|h\|_{W^{1,\infty}(\S^2)} \leq C$, 
then it is straightforward to see that the matrix $P$ is uniformly elliptic on $B_1$, 
i.e., $c_0 |\xi|^2 \leq \la P(x) \xi, \xi\ra \leq C_0|\xi|^2$ 
for all $\xi \in \R^3$, all $|x| \leq 1$, 
for some constants $0<c_0 <C_0$ independent of $x, \xi$. 
Therefore the differential operator in \eqref{punct.modif.Lap.06} is uniformly elliptic, 
and the weak solution of problem \eqref{punct.modif.Lap.06} is unique, i.e., 
$\tilde \Phi$ is its unique weak solution.

By the change of variable $y = \gamma_B(x)$, 
the divergence theorem and \eqref{punct.modif.Lap.06}, we have 
\begin{equation}
\label{eq:hamiltonian-0}
\int_{\Omega} |\nabla \Phi|^2\,dx
= \int_{B_1} \langle P \nabla\tilde  \Phi, \nabla\tilde  \Phi\rangle\, dx
= \int_{\S^2} \psi \langle P \nabla \tilde \Phi  ,  x\rangle\, d\sigma. 
\end{equation}
By the definition \eqref{def:Dirichlet-Neumann} of the Dirichlet-Neumann, 
the differentiation rule \eqref{grad.rule}, 
the first identity in \eqref{nu.h} for the unit normal $\nu_\Om$, 
formula \eqref{nu.h.2} for the normal vector $N$, 
and definition \eqref{def.P} of the matrix $P$, 
we get 
\begin{align} \label{eq:hamiltonian-1}
G(h) \psi (x) 
& = \la [D (\mE_1 \g)(x)]^{-T} \grad \tilde \Phi(x) , 
\frac{ [D (\mE_1 \g)(x)]^{-T} x }{ | [D (\mE_1 \g)(x)]^{-T} x | } \ra
\notag \\ & 
= \frac{ \la P(x) \grad \tilde \Phi(x) , x \ra }
{ | [D (\mE_1 \g)(x)]^{-T} x | \det [D (\mE_1 \g)(x)]}
\end{align}
for $x \in \S^2$. 
The matrix $[D (\mE_1 \g)(x)]^{-T}$ on $\S^2$ is given by \eqref{eq:Dgamma-T-on-S2}, and therefore 
\begin{equation} \label{D.tilde.gamma.inv.T.x}
[D (\mE_1 \g)]^{-T} x 
= \frac{x}{1 + h} - \frac{\grad_{\S^2} h}{(1 + h)^2} 
\quad \text{and} \quad 
|[D (\mE_1 \g)(x)]^{-T} x|= \frac{J}{(1+h)^2} 
\end{equation}
on $\S^2$, where $J$ is in \eqref{def:J}. 
Thus, by \eqref{eq:hamiltonian-1}, \eqref{D.tilde.gamma.inv.T.x} and \eqref{det.D.tilde.gamma}, 
we deduce that 
\begin{align}
G(h) \psi = \frac{ \la P \grad \tilde \Phi , x \ra }{ (1+h)J }  
= \frac{ J \la \grad \tilde \Phi , x \ra }{ (1 + h)^2 }  
- \frac{ \la \grad_{\S^2} \psi , \grad_{\S^2} h \ra }{ (1 + h) J }
\label{formula.G.P} 
\end{align}
on $\S^2$, where $J$ is in \eqref{def:J}, 
$P$ is in \eqref{def.P}, 
and $\tilde \Phi$ is the solution of \eqref{punct.modif.Lap.06}. 
The second identity in \eqref{formula.G.P} is obtained by using \eqref{punct.P.0} 
and the identity 
$\la \grad \tilde \Phi , \grad_{\S^2} h \ra = \la \grad_{\S^2} \psi , \grad_{\S^2} h \ra$. 

By \eqref{eq:hamiltonian-0} and \eqref{formula.G.P} we have that
\begin{equation} \label{kin.en.G}
\frac12 \int_{\Omega} |\nabla \Phi|^2\,dx
= \frac12 \int_{\S^2}  \psi \, (G(h)  \psi)  \,(1+h) \sqrt{(1+h)^2+|\nabla_{\S^2} h|^2} \, d\sigma 
=: K(h,\psi), 
\end{equation}
where the last identity defines $K$. 
Hence the energy \eqref{eq:energy} written in terms of $h, \psi$ is
\begin{align} 
\mathcal H (h, \psi) = K(h,\psi) + \sigma_0 U(h),
\label{nat.cand.Ham}
\end{align}
with $K$ in \eqref{kin.en.G} and $U$ in \eqref{def.U.h}. 
With the same calculations above, 
given $h, \psi_1, \psi_2$, with corresponding $\Phi_i, \tilde \Phi_i$, $i=1,2$, 
one has  
\[
\int_{\Omega} \la \nabla \Phi_1 , \nabla \Phi_2 \ra \,dx
= \int_{B_1} \langle P \nabla \tilde \Phi_1, \nabla \tilde \Phi_2 \rangle\, dx
= \int_{\S^2} \psi_2 \langle P \nabla \tilde \Phi_1 , x \rangle\, d\sigma
= \int_{\S^2} \psi_2 G(h) \psi_1 (1+h) J \, d\sigma,
\]
and therefore $G(h)$ satisfies 
\begin{equation} \label{G.self.adj}
\int_{\S^2} \psi_1 G(h) \psi_2 \, d\mu_h 
= \int_{\S^2} \psi_2 G(h) \psi_1 \, d\mu_h, 
\quad d\mu_h = (1+h) \sqrt{(1+h)^2+|\nabla_{\S^2} h|^2}\,d\sigma,
\end{equation}
for all $\psi_1, \psi_2 \in H^{\frac12}(\S^2)$.

We also define a modified Hamiltonian  as
\begin{equation}
\label{def:modifiedHam}
\mathcal H_{2\s_0}(h,\psi)  := \mathcal H(h,\psi) - 2\s_0 \text{Vol}(\Omega_t) =  \mathcal H(h,\psi) - \frac{2 \s_0}{3} \int_{\S^2}(1+h)^3  \, d\sigma.
\end{equation}

\begin{proposition} \label{prop:quasi.Ham}
System \eqref{kin.eq.08}, \eqref{dyn.eq.08-1} is system
\begin{equation}  \label{quasi.Ham.syst}
\pa_t h = \frac{ \pa_\psi \mathcal H(h,\psi) }{ (1+h)^2 } \,,
\quad \ 
\pa_t \psi \sim - \frac{ \pa_h \mathcal H(h,\psi) }{ (1+h)^2 } \,,
\end{equation}
where $\pa_h \mH$, $\pa_\psi \mH$ are the gradients of $\mH$ 
with respect to the $L^2(\S^2)$ scalar product. System \eqref{kin.eq.08}, \eqref{dyn.eq.08} is system
\begin{equation}  \label{quasi.Ham.syst-2}
\pa_t h = \frac{ \pa_\psi \mathcal H_{2\s_0}(h,\psi) }{ (1+h)^2 } \,,
\quad \ 
\pa_t \psi=  - \frac{ \pa_h \mathcal H_{2\s_0}(h,\psi) }{ (1+h)^2 }\,.
\end{equation}
\end{proposition}

We provide two different proofs of Proposition \ref{prop:quasi.Ham}. 
The first proof is here and uses Hadamard's formula; 
the second proof is in subsection \ref{subsec:Ham.by.der.G}, 
and uses the formula \eqref{G.der.02} of the shape derivative of the Dirichlet-Neumann operator.

\begin{proof}[First proof of Proposition \ref{prop:quasi.Ham}]
By linearity and \eqref{G.self.adj}, we have 
\begin{equation}  \label{der.K.psi}
\pa_\psi \mH(h, \psi) = \pa_\psi K(h,\psi) = (1+h) J G(h) \psi,
\end{equation}
with $J$ in \eqref{def:J}. 
Hence \eqref{kin.eq.08} is the first equation in \eqref{quasi.Ham.syst}.
To calculate $\pa_h \mH$, we consider $\pa_h U$ and $\pa_h K$ separately. 
Concerning the potential energy $U$ in \eqref{def.U.h}, 
its derivative with respect to $h$ in direction $\eta$ is 
\[
\begin{split}
\pa_{h}U(h)[\eta]
&= \int_{\S^2} \eta \sqrt{(1+h)^2+|\nabla_{\S^2} h|^2}\, d\sigma
+ \int_{\S^2}(1+h) \frac{(1+h) \eta + \langle\nabla_{\S^2} h ,\nabla_{\S^2} \eta \rangle}
{\sqrt{(1+h)^2+|\nabla_{\S^2} h|^2}} \, d \sigma
\\
&= \int_{\S^2} \eta \, \frac{2(1+h)^2 +|\nabla_{\S^2} h|^2}{\sqrt{(1+h)^2+|\nabla_{\S^2} h|^2}}\, d\sigma
- \int_{\S^2} \eta \, \div_{\S^2} \bigg( \frac{ (1+h) \nabla_{\S^2} h }{ \sqrt{(1+h)^2+|\nabla h|^2}\,} \bigg) \, d\sigma
\\
&=
\int_{\S^2} \eta \, \frac{2(1+h)^2 }{\sqrt{(1+h)^2+|\nabla_{\S^2} h|^2}}\, d\sigma
- \int_{\S^2} \eta (1+h) \div_{\S^2} \bigg( \frac{ \nabla_{\S^2} h }{\sqrt{(1+h)^2+|\nabla h|^2}\,} \bigg) \, d\sigma
\\
&=
\int_{\S^2} \eta (1+h) \div_{\S^2} \bigg( \frac{ (1+h) x - \nabla_{\S^2} h }{\sqrt{(1+h)^2+|\nabla_{\S^2} h|^2}} \bigg) \, d\sigma,
\end{split}
\]
where we have used the divergence theorem 
\eqref{div.thm.manifold} on $\S^2$   
with $F = (1+h) J^{-1} h_1 \grad_{\S^2} h$ where $J$ is defined in \eqref{def:J},
\eqref{S2.gradient.is.orthogonal.to.x}, 
and $\div_{\S^2}(x) = 2$. 
Hence
\begin{equation} \label{der.U.h} 
\pa_{h} U(h) 
= (1+h) \div_{\S^2} \bigg( \frac{ (1+h) x - \nabla_{\S^2} h }{\sqrt{(1+h)^2+|\nabla_{\S^2} h|^2}} \bigg)
= (1+h)^2 H(h)
\end{equation}
on $\S^2$, where $H(h)$ is given by \eqref{eq:meancurvature0}. 
To prove the second identity in \eqref{der.U.h}, apply formula 
$\div_{\S^2} (\ph F) = \ph \, \div_{\S^2} F + \la \grad_{\S^2} \ph , F \ra$  
to $\ph = (1+h) J^{-1}$, $F = x$ 
and to $\ph = J^{-1}$, $F = \grad_{\S^2} h$, 
where $J$ is in \eqref{def:J}, 
and use the fact that, for any extension of $h$, one has 
$\grad(J^{-1}) = - J^{-2} \grad J$ 
and $\grad J = J^{-1} \{ (1+h) \grad h + (D^2 h) \grad h \}$.

To compute the derivative of $K(h,\psi)$ with respect to $h$ in direction $\eta$,  
we consider $K(h+\e \eta, \psi)$, given by \eqref{kin.en.G} 
with $h$ replaced by $h+\e \eta$. To this aim, we define 
\begin{equation} \label{def.gamma.epsilon}
\gamma_\e(x) = (1 + h(x) + \e \eta(x) ) x
\end{equation}
for $x \in \S^2$, 
we extend $h, \eta, \gamma_\e$ 
as $\mathcal E_0 h, \mathcal E_0 \eta, \gamma_{\e,B}$, 
with $\mE_0, \mE_1$ in \eqref{def.tilde.h}, 
and $\gamma_{\e,B}$ defined like in \eqref{def.gamma.B} 
but with $h$ replaced by $h + \e \eta$. 
We denote $\Om_\e = \g_{\e,B}(B_1)$,   
and we denote $\Phi_\e$ the solution of problem 
\begin{equation} \label{Lap.problem.epsilon}
\Delta \Phi_\e = 0  
\ \ \text{in } \Omega_\e,
\qquad 
\Phi_\e = \psi \circ \gamma_\e^{-1} 
\ \ \text{on } \pa \Om_\e, 
\quad \text{i.e., } \quad  
\Phi_\e (\gamma_\e(x)) = \psi(x) \quad \forall x \in \S^2.
\end{equation} 
For $\e=0$, this is problem \eqref{def:Dirichlet-Neumann2}, 
and we write $\Phi, \gamma, \Omega$ instead of $\Phi_0, \gamma_0, \Omega_0$. 
Denote $\dot{\Phi} = \pa_\e|_{\e=0} \Phi_\e$. 
Thus, by \eqref{kin.en.G}, 
\begin{equation} \label{def.K.h.epsilon.eta}
K(h +\e \eta , \psi) = \frac12 \int_{\Om_\e} |\grad \Phi_\e|^2 \, dx.
\end{equation}
To differentiate \eqref{def.K.h.epsilon.eta} with respect to $\e$, 
we use the following formula (see, e.g., \cite{Henrot.Pierre}). 

\begin{lemma}[Hadamard formula, \emph{or} Reynolds transport theorem] \label{lem:hadamard}
Let $\Omega\subset \R^3$ be as in \eqref{def:omega}, 
and assume that $\beta_\e : \R^3 \to \R^3$ is a one-parameter family of diffeomorphisms, 
differentiable with respect to the parameter $\e$, 
such that $\beta_0(x) = x$, 
and let $\frac{d}{d \e}\big|_{\e = 0} \beta_\e(x) = X(x)$.
Assume that a family of functions $u(\cdot, \e):\Omega_\e\to \R $ is differentiable with respect to $\e$, 
and denote $\dot{u} = \pa_\e u(\cdot, 0)$. Then
\[
\frac{d}{d\e}\Big|_{\e = 0}\int_{\Omega_\e} u(x, \e)\,dx 
= \int_{\pa \Omega} u \langle X, \nu_{\Omega} \rangle\, d\sigma 
+ \int_{\Omega} \dot{u} \, dx.
\]
\end{lemma}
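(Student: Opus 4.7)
The plan is to reduce the integral over the $\e$-dependent domain $\Om_\e = \beta_\e(\Om)$ to an integral over the fixed domain $\Om$ by a change of variable, and then differentiate under the integral sign.

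First I would use the hypothesis that $\beta_0 = \mathrm{id}$ and $\beta_\e$ is differentiable in $\e$, so that for $|\e|$ small $\det D\beta_\e > 0$ uniformly and $\beta_\e$ is a diffeomorphism. Setting $J_\e(y) := \det D\beta_\e(y)$, the change of variable $x = \beta_\e(y)$ gives
\[
\int_{\Om_\e} u(x,\e)\, dx = \int_\Om u(\beta_\e(y), \e) \, J_\e(y)\, dy.
\]
Since both factors are now differentiable in $\e$ on a fixed domain, a standard dominated-convergence argument allows one to differentiate under the integral sign at $\e = 0$. Using $\beta_0(y) = y$, $\pa_\e|_{\e=0}\beta_\e = X$, and $\pa_\e u(\cdot,0) = \dot u$, one obtains
\[
\frac{d}{d\e}\bigg|_{\e=0}\int_{\Om_\e} u(x,\e)\, dx
= \int_\Om \Big( \la \grad u, X \ra + \dot u \Big)\, dy
+ \int_\Om u \, \pa_\e\big|_{\e=0} J_\e \, dy.
\]

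The central computation is $\pa_\e|_{\e=0} J_\e$. By Jacobi's formula, $\frac{d}{d\e}\det A(\e) = \det(A(\e)) \, \mathrm{tr}(A(\e)^{-1} A'(\e))$, applied to $A(\e) = D\beta_\e(y)$ with $A(0) = I$, this is $\mathrm{tr}(DX(y)) = \div X(y)$. Substituting back and using the Leibniz identity $\la \grad u, X\ra + u\,\div X = \div(u X)$, I would then apply the standard Euclidean divergence theorem in $\Om$ to get
\[
\int_\Om \div(uX)\, dy = \int_{\pa \Om} u \, \la X, \nu_\Om \ra \, d\sigma,
\]
while the $\dot u$ term remains untouched. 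Combining gives the stated formula.

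The main (modest) obstacle is the justification of differentiation under the integral sign and of the change-of-variable formula when $u(\cdot,\e)$ is only defined on $\Om_\e$: this is handled by the hypothesis that $u(\cdot,\e)$ is differentiable in $\e$ (which one reads as: the composition $u(\beta_\e(\cdot), \e)$ is differentiable in $\e$ on $\Om$ with derivative $\la \grad u, X\ra + \dot u$ at $\e = 0$). No geometric subtlety arises here; in particular, the spherical geometry plays no role in this lemma, whose statement is purely Euclidean.
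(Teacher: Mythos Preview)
Your proof is correct and follows the standard argument for the Reynolds transport theorem. Note that the paper does not actually prove this lemma; it simply cites \cite{Henrot.Pierre} for the result, so there is no paper proof to compare against.
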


To apply Lemma \ref{lem:hadamard},  
we define the family of  diffeomorphisms $\beta_\e :\R^3 \to \R^3$ by 
\[
\beta_\e(x) = \Big(1 + \e \frac{\mE_0 \eta(x)}{1+ \mE_0 h (x)} \Big) x
\] 
for $x \in \R^3 \setminus \{ 0 \}$, and $\beta_\e (0) = 0$. 
Hence $\Om_\e =  \beta_\e(\Om)$ 
and $(\mE_1 \gamma) \circ \beta_\e = \beta_\e \circ (\mE_1 \gamma) = \mE_1 \gamma_\e$.
For all $x \in \R^3 \setminus \{ 0 \}$, it holds
\begin{equation*}
X(x) = \frac{d}{d\e} \Big|_{\e = 0} \beta_\e(x) = \frac{\mE_0 \eta(x)}{1+ \mE_0 h (x)}  \, x,
\end{equation*}
and, at $y = \gamma(x) \in \pa \Om$, with $x \in \S^2$, one has 
\begin{equation} \label{X.at.gamma.S2}
X(\gamma(x)) =\frac{\mathcal E_0 \eta(\gamma(x))}{1+ \mE_0 h(\gamma(x))} \gamma(x) 
= \mathcal E_0 \eta(x) x, 
\end{equation}
because from $y = (1 + h(x)) x$ it follows that $\mE_0 h(y) = \mE_0 h(x) = h(x)$. 
By \eqref{def.K.h.epsilon.eta} and Lemma \ref{lem:hadamard}, we have
\begin{align}
\pa_h K(h, \psi)[\eta] 
&= \frac{d}{d\e} \Big|_{\e = 0} \, \frac12\int_{\Om_\e } |\nabla \Phi_\e |^2\,dx
\notag \\
&= \frac12 \int_{\pa \Om} |\nabla \Phi|^2  \la X , \nu_\Om \ra \, d\sigma
+ \int_{\Om} \langle \nabla \Phi , \nabla \dot{\Phi} \rangle\, dx
\notag \\
&= \frac12 \int_{\pa \Om} |\nabla \Phi|^2  \la X , \nu_\Om \ra \, d\sigma
+ \int_{\pa \Om} \dot{\Phi}\, \langle \nabla \Phi, \nu_{\Om}  \rangle  d \sigma,
\label{der.K.h}
\end{align}
where we have integrated by parts and used that $\Phi$ is harmonic in $\Om$.
By \eqref{X.at.gamma.S2}, \eqref{nu.h} and the area formula, 
the change of variable $y = \g(x)$, $d \sigma(y) = (1+h) J d\sigma(x)$ gives 
\[
\int_{\pa \Om} |\nabla \Phi|^2 \la X, \nu_\Om \ra \, d\sigma 
=\int_{\S^2}|\nabla \Phi (\gamma(x))|^2  (1+h(x))^2 \eta(x)\, d\sigma. 
\]
From the identity $\Phi_\e(\gamma_\e(x))=\psi(x)$, $x \in \S^2$, 
it follows that 
$\dot \Phi(\gamma(x)) + \la (\grad \Phi)(\gamma(x)) , \eta(x) x \ra = 0$ 
for $x \in \S^2$, namely, by \eqref{X.at.gamma.S2}, 
\[
\dot \Phi (y) 
= - \langle \nabla \Phi (y), X(y) \rangle 
\]
at $y = \gamma(x) \in \pa \Om$, with $x \in \S^2$.
Hence, by \eqref{def:Dirichlet-Neumann}, \eqref{X.at.gamma.S2} and the area formula, 
the last integral in \eqref{der.K.h} is
\[\begin{split}
\int_{\pa \Om} \dot{\Phi}\, \langle \nabla \Phi, \nu_{\Om}  \rangle \, d \sigma 
& = - \int_{\pa \Om} \langle \nabla \Phi , X \rangle \langle \nabla \Phi, \nu_{\Om}  \rangle \, d \sigma
= - \int_{\S^2}\langle (\nabla \Phi)(\gamma(x)), x \rangle \eta (1 + h) J G(h)\psi \, d\sigma.
\end{split}
\]
Thus, by \eqref{der.K.h}, 
\begin{equation}\label{der.K.h.bis}
\pa_h K(h, \psi)  = \frac{|(\nabla \Phi) \circ \gamma|^2 (1+ h)^2 }{2} 
- \langle (\nabla \Phi) \circ \gamma, x\rangle       (1+ h) J   \,   G(h)\psi. 
\end{equation}
By \eqref{eq:normal-part-Phi}, \eqref{nu.h}, \eqref{eq:tangential-part-Phi}, we have 
\begin{align*} 
\langle (\nabla \Phi) \circ \gamma, x \rangle 
& = \la (\nabla_{\pa \Om} \Phi) \circ \gamma, x \ra + \la (\nabla_{\nu} \Phi) \circ \gamma, x \ra  
= \frac{ \la \grad_{\S^2} \psi , \grad_{\S^2} h \ra }{ J^2 } + \frac{1+h}{J} G(h)\psi
\end{align*}
on $\S^2$, 
and, by \eqref{eq:normal-part-Phi} and \eqref{eq:2.2-1}, 
\begin{align*}
|(\grad \Phi) \circ \gamma |^2 
& = |(\grad_{\pa \Om} \Phi) \circ \gamma |^2 + | (\grad_\nu \Phi) \circ \gamma |^2 
= \big( G(h)\psi \big)^2 + \frac{ |\grad_{\S^2} \psi|^2 }{ (1+h)^2 }
- \frac{ \la \grad_{\S^2} \psi , \grad_{\S^2} h \ra^2 }{ (1+h)^2 J^2 } 
\end{align*}
on $\S^2$.
Using also \eqref{der.U.h}, 
it follows that \eqref{dyn.eq.08-1} is the second equation in \eqref{quasi.Ham.syst}. 
\end{proof}

Now we show that, with a simple change of variable, 
the factor $(1+h)^{-2}$ can be removed from \eqref{quasi.Ham.syst-2}, 
so that we obtain a Hamiltonian system written in Darboux coordinates.

\begin{lemma} \label{lemma:Darboux}
Consider a change of variable of the form
\begin{equation} \label{change.f.g}
h = f(\eta), \quad 
\psi = g(\eta) \varpi,
\end{equation}
where $f,g$ are real-valued functions of one real variable, 
$f$ invertible, $g$ never vanishing, with
\begin{equation} \label{cond.Ham}
(1 + f(\eta))^2 f'(\eta) g(\eta) = 1. 
\end{equation}
Then system \eqref{quasi.Ham.syst-2} is transformed into the Hamiltonian system
\begin{equation} \label{pure.Ham.syst}
\begin{cases} \pa_t \eta = \pa_\varpi \tilde \mH(\eta, \varpi), \\
\pa_t \varpi = - \pa_\eta \tilde \mH (\eta, \varpi), 
\end{cases}
\end{equation}
where 
\begin{equation} \label{def.mH.1}
\tilde \mH(\eta, \varpi) = \mH_{2\s_0} ( f(\eta), g(\eta) \varpi ).
\end{equation}
\end{lemma}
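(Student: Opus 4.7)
The plan is to verify the transformation by direct computation using the chain rule. Under the pointwise change of variable \eqref{change.f.g}, the $L^2(\S^2)$-gradients of $\mH_1$ and $\mH$ are related by the ordinary chain rule, since $f$ and $g$ act pointwise in $\eta$. Concretely, for admissible variations, I would establish
\begin{align*}
\pa_\eta \mH_1(\eta,\varpi) &= f'(\eta)\, \pa_h \mH(h,\psi) + g'(\eta)\,\varpi\, \pa_\psi \mH(h,\psi),\\
\pa_\varpi \mH_1(\eta,\varpi) &= g(\eta)\, \pa_\psi \mH(h,\psi),
\end{align*}
with $h = f(\eta)$ and $\psi = g(\eta)\varpi$. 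Inverting this linear system pointwise yields
\[
\pa_\psi \mH = \frac{\pa_\varpi \mH_1}{g(\eta)}, \qquad
\pa_h \mH = \frac{1}{f'(\eta)}\Big(\pa_\eta \mH_1 - \frac{g'(\eta)\varpi}{g(\eta)} \pa_\varpi \mH_1\Big).
\]

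Next I would differentiate \eqref{change.f.g} in time, obtaining $\pa_t h = f'(\eta)\pa_t \eta$ and $\pa_t \psi = g'(\eta)\varpi\,\pa_t\eta + g(\eta)\pa_t\varpi$, and substitute into \eqref{quasi.Ham.syst}. The first equation of \eqref{quasi.Ham.syst} becomes
\[
f'(\eta)\pa_t\eta = \frac{\pa_\varpi \mH_1}{g(\eta)\,(1+f(\eta))^2},
\]
and the condition \eqref{cond.Ham}, rewritten as $(1+f(\eta))^2 f'(\eta) g(\eta) = 1$, immediately gives $\pa_t\eta = \pa_\varpi \mH_1$, which is the first equation of \eqref{pure.Ham.syst}.

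For the second equation of \eqref{quasi.Ham.syst}, I substitute the expressions for $\pa_t\psi$ and $\pa_h\mH$ and use the already-derived relation $\pa_t\eta = \pa_\varpi \mH_1$. The term $g'(\eta)\varpi\,\pa_\varpi\mH_1$ appears on both sides — once from $\pa_t\psi$ and once from the inversion formula for $\pa_h \mH$, multiplied by the factor $\frac{1}{(1+f(\eta))^2 f'(\eta) g(\eta)}$ which equals $1$ by \eqref{cond.Ham} — so these two contributions cancel exactly. What remains is
\[
g(\eta)\,\pa_t\varpi = -\frac{\pa_\eta \mH_1}{(1+f(\eta))^2 f'(\eta)} = - g(\eta)\,\pa_\eta \mH_1,
\]
where in the last step I used \eqref{cond.Ham} once more. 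Dividing by $g(\eta)$, which never vanishes by hypothesis, yields $\pa_t \varpi = -\pa_\eta \mH_1$.

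The argument is essentially bookkeeping, and the only real content is algebraic: the non-canonical factor $(1+h)^{-2}$ appearing in \eqref{quasi.Ham.syst} is exactly compensated by $f'(\eta) g(\eta)$ by virtue of \eqref{cond.Ham}, while the cross-term $g'(\eta)\varpi$ produced by the $\varpi$-dependence of $\psi$ cancels automatically. There is no analytic obstacle, only the need to check that the cancellation of the cross-terms occurs thanks to precisely the same identity \eqref{cond.Ham} that absorbs the $(1+h)^{-2}$ factor, so the single scalar condition \eqref{cond.Ham} suffices to produce a genuine Hamiltonian form in Darboux coordinates.
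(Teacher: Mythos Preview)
Your proof is correct and follows essentially the same approach as the paper: chain rule for $\pa_\eta\mH_1,\pa_\varpi\mH_1$, inversion, time-differentiation of \eqref{change.f.g}, and substitution into \eqref{quasi.Ham.syst}. The paper packages the conclusion slightly more compactly by introducing $a(\eta)=(1+f(\eta))^2 f'(\eta)g(\eta)$ and observing that both transformed equations take the form $\pa_t\eta=\pa_\varpi\mH_1/a(\eta)$, $\pa_t\varpi=-\pa_\eta\mH_1/a(\eta)$, but the computation and the role of \eqref{cond.Ham} are identical.
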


\begin{proof} 
Differentiating \eqref{def.mH.1} gives
\[
\pa_\eta \tilde \mH(\eta, \varpi) = \pa_h \mH_{2\s_0}  (h, \psi) f'(\eta) + \pa_\psi \mH_{2\s_0}(h,\psi) g'(\eta) \varpi, 
\quad 
\pa_\varpi \tilde \mH(\eta, \varpi) = \pa_\psi \mH_{2\s_0}  (h, \psi) g(\eta),
\]
where $(h,\psi) = (f(\eta), g(\eta) \varpi)$, whence 
\[
\pa_h \mH_{2\s_0}  (h, \psi)  
= \frac{ \pa_\eta \tilde \mH(\eta, \varpi) }{ f'(\eta) }
- \frac{ \pa_\varpi \tilde \mH(\eta, \varpi) }{ g(\eta) f'(\eta) } g'(\eta) \varpi,
\quad 
\pa_\psi \mH_{2\s_0} (h, \psi) = \frac{ \pa_\varpi \tilde \mH(\eta, \varpi) }{ g(\eta) }\,.
\]
Also, 
\[
\pa_t h = f'(\eta) \pa_t \eta, 
\quad
\pa_t \psi = g'(\eta) \varpi \pa_t \eta + g(\eta) \pa_t \varpi.
\]
Hence \eqref{quasi.Ham.syst-2} becomes 
\[
\pa_t \eta = \frac{ \pa_\varpi \tilde \mH(\eta, \varpi) }{ a(\eta) }\,, 
\quad 
\pa_t \varpi = - \frac{ \pa_\eta\tilde \mH(\eta, \varpi) }{ a(\eta) }\,, 
\quad \text{where} \ 
a(\eta) = (1 + f(\eta))^2 f'(\eta) g(\eta),
\]
and this is \eqref{pure.Ham.syst} if $f,g$ satisfy \eqref{cond.Ham}.
\end{proof}

Special cases of transformations \eqref{change.f.g} satisfying \eqref{cond.Ham} are 
\begin{itemize}
\item[($i$)] 
$f(\eta) = \eta$, $g(\eta) = (1 + \eta)^{-2}$, 
which is the change of variable 
$\psi = \varpi / (1+h)^2$ with $h$ unchanged;
\item[($ii$)]
$f(\eta) = (1 + 3 \eta)^{\frac13} - 1$, $g(\eta) = 1$, 
which is $(1 + h)^3 = (1 + 3 \eta)$, with $\psi$ unchanged.
\end{itemize}
The transformation $(i)$ offers the convenience of not having to change $h$ 
in the Dirichlet-Neumann operator $G(h)$ and in the mean curvature $H(h)$. 
The transformation $(ii)$ also has some advantage, 
because, in that case, the conservation of the total fluid mass 
becomes a zero average condition for the new elevation function $\eta$. 
This nice feature of $(ii)$, however, only concerns the mass conservation, 
because the conservation of the barycenter velocity becomes 
a condition involving $(1 + 3 \eta)^{\frac43}$, 
which does not seem to be better than $(1+h)^4$.

\section{Shape derivative of the Dirichlet-Neumann operator} \label{sec:shapederivative}

The Dirichlet-Neumann operator $G(h)\psi$ 
defined in \eqref{def:Dirichlet-Neumann} is linear in $\psi$ 
and, as we prove below (Theorem \ref{thm:tame.est.DN.op}),
it depends analytically on $h$ in suitable Sobolev spaces. 
Hence $G(h)\psi$ is differentiable with respect to $h$;
in this section we prove the following formula for its derivative.

\begin{theorem}  \label{thm:shape.der.DN.op}
There exists $\delta > 0$ such that, 
for $h, \eta \in H^3(\S^2)$, $\psi \in H^{\frac52}(\S^2)$, 
$\| h \|_{H^3(\S^2)} < \delta$, 
the Fr\'echet derivative of $G(h)\psi$ with respect to $h$ in direction $\eta$ is 
\begin{equation} \label{G.der.02}
G'(h)[\eta]\psi = b \eta + \la B , \grad_{\S^2} \eta \ra - G(h) (W \eta),
\end{equation}
where 
\begin{equation} \label{def.W.B}
W = \frac{ \la \grad_{\S^2} \psi , \grad_{\S^2} h \ra }{ J^2 } 
+ \frac{ (1 + h) G(h)\psi }{ J }\,,
\quad \ 
B = \frac{\la \grad_{\S^2} \psi,  \grad_{\S^2} h \ra \grad_{\S^2} h }{ (1+h) J^3 }  
- \frac{ \grad_{\S^2} \psi }{ (1+h) J }\,,
\end{equation}
\begin{equation} \label{def.b}
b = \frac{ \la \grad_{\S^2} \psi , \grad_{\S^2} h \ra }{ J^3 } 
- \frac{ 2 G(h) \psi }{ 1+h } 
- \frac{ \div_{\S^2} \{ (1+h) ( \grad_{\S^2} \psi - W \grad_{\S^2} h ) \} }{ (1+h)^2 J }, 
\quad \ 
\end{equation}
and $J$ is in \eqref{def:J}. 
\end{theorem}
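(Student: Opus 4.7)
The plan is to differentiate the identity
$G(h_\e)\psi(x) = \la (\nabla \Phi_\e)(\gamma_\e(x)), \nu_{\Omega_\e}(\gamma_\e(x))\ra$
at $\e = 0$, with $h_\e = h + \e\eta$, $\gamma_\e$ as in \eqref{def.gamma.epsilon} and $\Phi_\e$ solving \eqref{Lap.problem.epsilon}. The Eulerian shape derivative $\dot\Phi := \pa_\e|_{\e=0} \Phi_\e$ is harmonic on $\Omega$, and differentiating the boundary identity $\Phi_\e(\gamma_\e(x)) = \psi(x)$ in $\e$ yields $\dot\Phi(\gamma(x)) = -\eta(x)\,\la (\nabla \Phi)(\gamma(x)), x\ra$ on $\S^2$. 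Splitting $(\nabla \Phi)\circ\gamma$ into normal and tangential parts via \eqref{eq:normal-part-Phi} and \eqref{eq:tangential-part-Phi}, exactly as in the proof of Proposition \ref{prop:quasi.Ham}, one finds that $\la (\nabla \Phi)(\gamma(x)), x\ra$ equals the function $W$ of \eqref{def.W.B}, hence $\dot\Phi \circ \gamma = -W\eta$ on $\S^2$.

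Applying the chain rule to $G(h_\e)\psi(x)$ yields three contributions:
\[
G'(h)[\eta]\psi(x)
= \underbrace{\la (\nabla \dot\Phi)(\gamma(x)), \nu_\Omega(\gamma(x))\ra}_{(I)}
+ \underbrace{\eta(x)\, \la D^2\Phi(\gamma(x))\, x, \nu_\Omega(\gamma(x))\ra}_{(II)}
+ \underbrace{\la (\nabla \Phi)(\gamma(x)), \dot\nu(x)\ra}_{(III)},
\]
where $\dot\nu(x) := \pa_\e|_{\e=0}(\nu_{\Omega_\e}\circ\gamma_\e)(x)$. By the very definition of the Dirichlet-Neumann operator applied to the Dirichlet datum $\dot\Phi\circ\gamma = -W\eta$, term $(I)$ equals $-G(h)(W\eta)$, which is the last summand of \eqref{G.der.02}. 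It therefore remains to show that $(II) + (III) = b\eta + \la B, \nabla_{\S^2}\eta\ra$. For $(III)$, differentiating formula \eqref{nu.h} in $\e$ expresses $\dot\nu$ as a linear combination of $\eta$ and $\nabla_{\S^2}\eta$ with coefficients built from $h, \nabla_{\S^2}h$ and $J$; pairing with $(\nabla\Phi)\circ\gamma$, split through \eqref{eq:tangential-part-Phi} and \eqref{eq:normal-part-Phi}, produces the coefficient $B$ in front of $\nabla_{\S^2}\eta$ together with part of the $\eta$-coefficient of $b$. For $(II)$, decompose $x$ into its normal and tangential components relative to $\pa\Omega$, thereby splitting the quadratic form in $D^2\Phi$ into a tangential-tangential, a tangential-normal and a normal-normal piece; the normal-normal piece is eliminated using the harmonicity of $\Phi$ via \eqref{eq:laplace-extension}, which inserts the tangential Laplacian $\Delta_{\pa\Omega}\Phi$ and the mean curvature $H_\Omega$. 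Rewriting $\Delta_{\pa\Omega}\Phi$ and $\nabla_{\pa\Omega}\Phi$ in terms of $\psi, h$ via Lemmas \ref{lem:LapBel}, \ref{lem:meancurvature} and \ref{lem:formulas}, the divergence-form term of $b$ in \eqref{def.b} emerges and the remaining scalar contributions assemble into the other two summands of $b$.

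The main obstacle is the algebraic bookkeeping in this last identification. On $\S^2$ the tangential derivatives do not commute (cf.\ \eqref{plants.02}); the natural $1$-homogeneous extension $\mE_1\gamma$ in \eqref{def.tilde.h}, used to parametrize all derivatives of $\Phi$ via Lemma \ref{lem:formulas}, is singular at the origin; and many cancellations involving powers of $J$, of $1 + h$ and of $\nabla_{\S^2}h$ must be tracked carefully in order to reach the precise form $b\eta + \la B, \nabla_{\S^2}\eta\ra - G(h)(W\eta)$ of \eqref{G.der.02}. A cleaner alternative, mentioned in the introduction, is Alinhac's good unknown: after pulling back via $\mE_1\gamma_\e$ to the unit ball $B_1$ as in \eqref{punct.modif.Lap.06}, introduce the shifted variable obtained from $\dot{\tilde\Phi}$ by subtracting a suitable multiple of the radial derivative of $\tilde\Phi$; this shifted variable is harmonic in $B_1$ with Dirichlet datum $-W\eta$ on $\S^2$, so that $(I)$ reappears as $-G(h)(W\eta)$, while the commutator of the shift with the pulled-back elliptic operator in \eqref{punct.modif.Lap.06} produces exactly $b\eta + \la B, \nabla_{\S^2}\eta\ra$, at the price of controlling the singularity of $\mE_1\gamma$ at the origin.
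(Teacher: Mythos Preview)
Your proposal is correct and follows essentially the paper's second proof (the geometric argument of subsection~\ref{subsec:proof.via.geometric.argument}): the three-term splitting $(I)+(II)+(III)$ is exactly $A_1+A_2+A_3$ there, with the same identification $A_1=-G(h)(W\eta)$, the same normal/tangential decomposition of $X=\eta x$ for $A_2$ combined with \eqref{eq:laplace-extension} and Lemmas~\ref{lem:LapBel}, \ref{lem:meancurvature}, \ref{lem:formulas}, and a direct computation of $\dot\nu$ for $A_3$ (the paper uses a formula from \cite{Cagnetti.Mora.Morini} instead of differentiating \eqref{nu.h}, but both routes work). Your closing sketch of the Alinhac alternative also matches the paper's first proof (subsection~\ref{subsec:proof.via.good.unknown}), with the minor caveat that the good unknown $w=f_1+\alpha D_x f_0$ satisfies $\div(P_0\nabla w)=0$ in $B_1$ rather than being literally harmonic.
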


We give two independent proofs of Theorem \ref{thm:shape.der.DN.op}. 
The first proof, in subsection \ref{subsec:proof.via.good.unknown}, 
follows the method of the ``good unknown of Alinhac'';
the second proof, in subsection \ref{subsec:proof.via.geometric.argument}, 
is based on a geometric argument.

\subsection{Proof by the method of the good unknown of Alinhac}
\label{subsec:proof.via.good.unknown}

In this section we prove Theorem \ref{thm:shape.der.DN.op} 
by adapting the approach of Alazard, M\'etivier, and Lannes   
to the nearly spherical geometry. 
We follow, as long as possible, the proof in Lannes' book \cite{Lannes.book}.

The Dirichlet-Neumann $G(h)\psi$ is defined in \eqref{def:Dirichlet-Neumann}, 
and it can be written as \eqref{formula.G.P}, where $\tilde \Phi$ is the solution of \eqref{punct.modif.Lap.06}, with $P$ defined  in \eqref{punct.P.0} and $J$ defined  in \eqref{def:J}.
To compute the derivative of $G(h)\psi$ with respect to $h$ in direction $\eta$ 
using formula \eqref{formula.G.P}, 
we have to study the derivative of $\tilde \Phi$ with respect to $h$ in direction $\eta$. 
We note that we will prove later, in Theorem \ref{thm:tame.est.DN.op}, that  $\tilde \Phi$ is analytical, and therefore differentiable, 
with respect to $h$. 

Let $\Phi_\e$ be the weak solution of problem \eqref{def:Dirichlet-Neumann2} with $h$ replaced by $h+\e \eta$,
that is, 
\begin{equation} \label{tavolo.01}
\Phi_\e \in H^1(\Om_\e),
\quad  
\Delta \Phi_\e = 0 \ \ \text{in } \Om_\e,
\quad  
\Phi_\e \circ \gamma_{\e,B} = \psi \ \ \text{on } \S^2. 
\end{equation}
Let 
\begin{equation} \label{tavolo.02}
\tilde \Phi_\e := \Phi_\e \circ \gamma_{\e,B}.
\end{equation}
Hence $\tilde \Phi_\e$ is the weak solution of problem 
\begin{equation} \label{punct.modif.Lap.pb.tilde.Phi.eps}
\tilde \Phi_\e \in H^1(B_1),
\quad  
\div ( P_\e \grad \tilde \Phi_\e) = 0 \ \ \text{in } B_1,
\quad  
\tilde \Phi_\e = \psi \ \ \text{on } \S^2,
\end{equation}
where $P_\e$ is the matrix we obtain by replacing $h$ with $h + \e \eta$ in \eqref{punct.P.0}. 
Differentiating problem \eqref{punct.modif.Lap.pb.tilde.Phi.eps} with respect to $\e$ at $\e=0$, we obtain  
\begin{equation} \label{lin.ell.pb}
f_1 \in H^1(B_1), 
\quad  
\div(P_1 \grad f_0) + \div(P_0 \grad f_1) = 0 \ \ \text{in } B_1, 
\quad 
f_1 = 0 \ \ \text{on } \S^2,
\end{equation}
where, to shorten the notation, we denote 
\begin{equation} \label{def.P0.P1.f0.f1}
P_0 := P, \quad \ 
P_1 := \pa_\e P_\e |_{\e=0} = P'(h)[\eta], \quad \ 
f_0 := \tilde \Phi, \quad \ 
f_1 := \pa_\e \tilde \Phi_\e |_{\e=0} = \tilde \Phi'(h)[\eta].
\end{equation}
The matrix $P_1$ can be directly obtained by differentiating \eqref{punct.P.0} 
with respect to $h$ in direction $\eta$, and it is 
\begin{equation} \label{punct.P.1}
P_1 = (\chi_1 \eta_0) I - \grad (\chi \eta_0) \otimes x 
- x \otimes \grad (\chi \eta_0)
+ \Big( \frac{ 2 \la \grad (\chi h_0) , \grad (\chi \eta_0) \ra }{ 1 + \chi_1 h_0 } 
- \frac{ |\grad (\chi h_0)|^2 \chi_1 \eta_0 }{ (1 + \chi_1 h_0)^2 } \Big) x \otimes x
\end{equation}
in $\R^3$, where $h_0 := \mE_0 h$ and $\eta_0 := \mE_0 \eta$. 
To adapt the method of the ``good unknown of Alinhac'' in \cite{Lannes.book} to the unit ball $B_1$, 
we replace the vertical partial derivative $\pa_z$ of the flat case 
with the radial derivative operator 
\begin{equation} \label{def.D.x}
D_x := \la x, \grad \ra, 
\end{equation}
and we look for a scalar function $\a$ defined in $B_1$ such that 
\begin{equation} \label{eq.for.alpha}
\div (P_1 \grad f_0) = \div(P_0 \grad (\a D_x f_0)) \quad \text{in } B_1.
\end{equation}
Identity \eqref{eq.for.alpha} implies that 
also the term $\div (P_1 \grad f_0)$ appearing in \eqref{lin.ell.pb} 
can be expressed as the operator $\div (P_0 \grad \cdot )$ applied to a scalar function;
as a consequence, the second item of \eqref{lin.ell.pb} becomes an identity of the form 
$\div (P_0 \grad v) = 0$ where $v$ is a scalar function, 
and this is the identity one has 
in the definition of the Dirichlet-Neumann operator. 
We prove that equation \eqref{eq.for.alpha} has an explicit solution, 
given by the following lemma. 

\begin{lemma} \label{lemma:alpha.immediately} 
Let $h, \psi, \eta, h_0, \eta_0, P_0, P_1, f_0, f_1$ be as above. Then the function  
\begin{equation} \label{alpha.candidate.immediately}
\a = - \frac{\chi \eta_0}{1 + \chi_1 h_0} 
\end{equation} 
solves equation \eqref{eq.for.alpha}.
\end{lemma}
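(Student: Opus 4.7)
The plan is to prove \eqref{eq.for.alpha} conceptually, by identifying the \emph{good unknown} $u := f_1 + \a D_x f_0$ as the pullback, via the fixed map $\mE_1 \g_0$, of a harmonic function on $\Om$, rather than by brute-force expansion of the matrices $P_0, P_1$. The key is the factorization $\mE_1 \g_\e = \beta_\e \circ \mE_1 \g_0$, where
$\beta_\e(y) = \big(1 + \e\, \mE_0 \eta(y)/(1+\mE_0 h(y))\big)\, y$
is the radial diffeomorphism $\Om \to \Om_\e$ already used in the proof of Proposition \ref{prop:quasi.Ham}.

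First I would introduce, for an interior point $y \in \Om$ (which belongs to $\Om_\e$ for all $\e$ small), the fixed-point derivative $\dot \Phi_0(y) := \pa_\e|_{\e=0} \Phi_\e(y)$. Since $\Delta \Phi_\e = 0$ in a neighborhood of $y$ for all such $\e$, differentiation in $\e$ gives $\Delta \dot \Phi_0 = 0$ on $\Om$. The analyticity of $\tilde \Phi_\e \in H^1(B_1)$ in $\e$ stated in Proposition \ref{prop:est.u.low.norm} ensures that this derivative is well defined in the appropriate functional sense.

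Next, by the chain rule and the above factorization,
\[
f_1(x) = \pa_\e|_{\e=0} \Phi_\e(\mE_1 \g_\e(x))
= \dot \Phi_0(\mE_1 \g_0(x)) + \la (\nabla \Phi)(\mE_1 \g_0(x)),\, X(\mE_1 \g_0(x)) \ra,
\]
where $X(y) := (\mE_0 \eta(y)/(1+\mE_0 h(y)))\, y$ is the vector field generating $\beta_\e$ at $\e = 0$. By $0$-homogeneity of $\mE_0 h$ and $\mE_0 \eta$ one has $X(\mE_1 \g_0(x)) = -\a(x)\, \mE_1 \g_0(x)$. A short calculation using \eqref{D.tilde.gamma.inv} together with $\la \nabla \mE_0 h, x\ra = 0$ (see \eqref{0.homog.tool}) gives $(D\mE_1 \g_0)^{-1} \mE_1 \g_0(x) = x$, and hence $\la (\nabla \Phi)(\mE_1 \g_0(x)),\, \mE_1 \g_0(x)\ra = D_x f_0(x)$. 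Putting these together, $u = f_1 + \a D_x f_0 = \dot \Phi_0 \circ \mE_1 \g_0$ is the pullback of a harmonic function by the fixed diffeomorphism $\mE_1 \g_0$, and the very same computation that led from \eqref{def:Dirichlet-Neumann2} to \eqref{punct.modif.Lap.06} then yields $\div(P_0 \nabla u) = 0$ on $B_1^*$. Combined with \eqref{lin.ell.pb}, namely $\div(P_0 \nabla f_1) = -\div(P_1 \nabla f_0)$, this is exactly \eqref{eq.for.alpha}.

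The main delicate point I anticipate is the regularity of $\dot \Phi_0$ and the rigorous justification of differentiating $\Phi_\e(y)$ pointwise in $\e$ at an interior $y$, given that the domain $\Om_\e$ itself depends on $\e$ (this is the manifestation at the proof level of the difficulty flagged in Remark \ref{rem:why.using.hom.functions}). A natural workaround is to first prove the lemma under the assumption that $h, \eta$ are smooth, so that \eqref{eq.for.alpha} reduces to a pointwise identity on $B_1^*$, and then to extend to rougher data by density using the analyticity and tame estimates of Section \ref{sec:analytic.tame.est.DN.op}.
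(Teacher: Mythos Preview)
Your argument is correct and takes a genuinely different route from the paper's. The paper proves the lemma by pure algebra: it first establishes (Lemma \ref{lemma:get.tilde.P}) that for \emph{any} scalar $\a$ one has $\div(P_0\grad(\a D_x f_0)) = \div(\tilde P\,\grad f_0)$ with an explicit matrix $\tilde P$ given by \eqref{pass.23}, then expands $\tilde P$ in terms of $h$ and $\a$ to obtain \eqref{tilde.P.sum.bis}, and finally plugs in the candidate \eqref{alpha.candidate.immediately} and checks entrywise that $\tilde P = P_1$. Your approach instead identifies the good unknown $u = f_1 + \a D_x f_0$ directly as $\dot\Phi_0\circ\mE_1\g_0$, the pullback of a harmonic function by the \emph{fixed} diffeomorphism, via the factorization $\mE_1\g_\e = \beta_\e\circ\mE_1\g_0$; then $\div(P_0\grad u)=0$ is immediate from the same change of variables that produced \eqref{punct.modif.Lap.06}, and \eqref{eq.for.alpha} follows from \eqref{lin.ell.pb}. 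This is more illuminating---it explains \emph{why} this particular $\a$ works (it is precisely what converts the Lagrangian derivative $f_1$ into the pullback of the Eulerian one $\dot\Phi_0$) and it bypasses the commutator calculus of Lemma \ref{lemma:get.tilde.P} entirely. The price is the regularity justification you flag: the paper's computation is purely algebraic once $\div(P_0\grad f_0)=0$ and $D_xP_0=0$ are known, whereas yours needs the interior differentiability of $\e\mapsto\Phi_\e$, which you reasonably propose to handle first for smooth data and then by density. Note also that your argument is essentially the interior version of what the paper does on the boundary in subsection \ref{subsec:proof.via.geometric.argument} (cf.\ \eqref{eq:A1} and the line $\dot\Phi(\g(x)) = -\eta W$), so the two proofs of Theorem \ref{thm:shape.der.DN.op} are, through your lens, more closely related than they might first appear.
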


\begin{proof}	
Let 
\begin{equation} \label{tavolo.04}
\dot{\Phi} := \pa_\e \Phi_\e |_{\e = 0}.
\end{equation}
The function $\dot \Phi$ is defined in the open set $\Om$, and it satisfies 
\begin{equation} \label{tavolo.03}
\dot{\Phi} \in H^1(\Om), \quad 
\Delta \dot{\Phi} = 0 \ \ \text{in } \Om,
\end{equation}
because the difference $\Phi_\e - \Phi$ is harmonic away from the boundary of $\pa \Omega$. 
Since $\dot{\Phi}$ is harmonic in $\Om$, the composition $\dot{\Phi} \circ \gamma_B$ satisfies 
\begin{equation} \label{tavolo.05}
\dot{\Phi} \circ \gamma_B \in H^1(B_1), \quad 
\div (P \grad (\dot{\Phi} \circ \gamma_B)) = 0 \ \ \text{in } B_1. 
\end{equation}
Differentiating \eqref{tavolo.02} with respect to $\e$ at $\e = 0$, we get 
\begin{equation} \label{tavolo.06}
f_1 = \dot{\Phi} \circ \gamma_B + \la (\grad \Phi) \circ \gamma_B , \dot{\gamma}_B \ra.
\end{equation}
Hence, by \eqref{tavolo.05}, one has 
\begin{align}
\div(P_0 \grad f_1) 
& = \div [ P_0 \grad ( \dot{\Phi} \circ \gamma_B ) ] 
+ \div [ P_0 \grad ( \la (\grad \Phi) \circ \gamma_B , \dot{\gamma}_B \ra ) ]
\notag \\ 
& = \div [ P_0 \grad ( \la (\grad \Phi) \circ \gamma_B , \dot{\gamma}_B \ra ) ].
\label{tavolo.07}
\end{align}
Identities \eqref{lin.ell.pb} and \eqref{tavolo.07} imply that 
\begin{equation} \label{tavolo.08}
\div (P_1 \grad f_0) = - \div (P_0 \grad f_1) 
= - \div [ P_0 \grad ( \la (\grad \Phi) \circ \gamma_B , \dot{\gamma}_B \ra ) ]
\ \ \text{in } B_1.
\end{equation}
By \eqref{grad.rule}, 
\begin{equation} \label{tavolo.10} 
(\grad \Phi)(\gamma_B(x)) = [D \gamma_B (x)]^{-T} \grad \tilde \Phi(x)  
\quad \text{in } B_1.
\end{equation}
Differentiating $\gamma_{\e,B}$ (see \eqref{def.gamma.B}) with respect to $\e$ at $\e = 0$ one has 
\begin{equation} \label{tavolo.09}
\dot{\gamma}_B(x) = \chi(x) \eta_0(x) x \ \ \text{in } \R^3,
\end{equation}
where $\eta_0 = \mE_0 \eta$. 
By \eqref{tavolo.10} and \eqref{tavolo.09}, one has 
\begin{align}
\la (\grad \Phi)(\gamma_B(x)) , \dot{\gamma}_B(x) \ra 
& = \la [D \gamma_B (x)]^{-T} \grad \tilde \Phi(x) , \chi(x) \eta_0(x) x \ra
\notag \\ 
& = \chi(x) \eta_0(x) \la \grad \tilde \Phi(x) , [D \gamma_B (x)]^{-1} x \ra.
\label{tavolo.11}
\end{align}
By \eqref{def.chi.1}, recalling the notation \eqref{def.D.x}, 
we calculate 
\begin{align}
\la \grad \tilde \Phi(x) , [D \gamma_B (x)]^{-1} x \ra
& = \la \grad \tilde \Phi(x) , 
\frac{x}{1 + \chi h_0} \Big( 1 - \frac{h_0 D_x \chi}{1+\chi_1 h_0} \Big) \ra
\notag \\
& = \frac{\la \grad \tilde \Phi(x) , x \ra }{1 + \chi h_0} 
\Big( 1 - \frac{h_0 D_x \chi}{1+\chi_1 h_0} \Big)
\notag \\
& = \frac{D_x \tilde \Phi}{1 + \chi_1 h_0}.
\label{tavolo.12}
\end{align}
By \eqref{tavolo.08}, \eqref{tavolo.11}, \eqref{tavolo.12}, we obtain the thesis. 
\end{proof}

Now we use Lemma \ref{lemma:alpha.immediately} to calculate 
the shape derivative $G'(h)[\eta] \psi$. 
Using \eqref{eq.for.alpha} to replace the term $\div (P_1 \grad f_0)$ 
with $\div(P_0 \grad(\a D_x f_0))$ in \eqref{lin.ell.pb}, one obtains 
\begin{equation} \label{def.w}
\div(P_0 \grad w) = 0 \ \ \text{in } B_1, 
\qquad 
w := f_1 + \a D_x f_0 \ \ \text{in } B_1,
\end{equation}
where $\alpha$ is the function in \eqref{alpha.candidate.immediately}.
By \eqref{lin.ell.pb}, $f_1 \in H^1(B_1)$,
and, by Lemma \ref{lemma:sol.u}, $\a D_x f_0 \in H^1(B_1)$. 
Hence $w$ in \eqref{def.w} is in $H^1(B_1)$. 
By \eqref{lin.ell.pb}, $f_1 = 0$ on $\S^2$, 
and, by \eqref{alpha.candidate.immediately}, $\a = - \eta / (1 + h)$ on $\S^2$, 
while, by \eqref{formula.G.P}, $D_x f_0 = \la x, \grad \tilde \Phi \ra$ on $\S^2$ is 
\begin{equation} \label{Dx.f0.on.S2}
D_x f_0 
= \frac{ (1+h)^2 G(h)\psi }{ J }
+ \frac{ (1+h) \la \grad_{\S^2} \psi , \grad_{\S^2} h \ra }{ J^2 }
= (1+h) W 
\ \text{on } \S^2,
\end{equation}
where $W$ is defined in \eqref{def.W.B}. 
Therefore $w$ in \eqref{def.w} is $w = - \eta W$ on $\S^2$. 
Thus $w$ is the weak solution of the problem 
\begin{equation} \label{lin.ell.pb.w}
w \in H^1(B_1), \quad 
\div(P_0 \grad w) = 0 \ \ \text{in } B_1, \quad 
w = - \eta W \ \ \text{on } \S^2.
\end{equation}
By \eqref{lin.ell.pb.w}, 
formula \eqref{formula.G.P} with $(- \eta W)$ in the role of $\psi$
and $w$ in that of $\tilde \Phi$ gives  
\begin{align}
- G(h)(\eta W) 
& = Z \la P_0 \grad w , x \ra 
\ \  \text{on } \S^2,
\quad Z := Z(h) = \frac{1}{(1+h)J},
\label{formula.G.w}
\end{align}
with $J$ in \eqref{def:J}.

Before we proceed, we remark that we may use the results from Section \ref{sec:analytic.DN.op} to make sure that  the  formula in Theorem \ref{thm:shape.der.DN.op}  is well-defined. 

\begin{lemma} \label{lemma:trace.ok.for.the.good.unknown}
Let $h, \eta \in H^3(\S^2)$, $\psi \in H^{\frac52}(\S^2)$, 
and let $\| h \|_{H^3(\S^2)} < \delta$, 
where $\delta$ is in Theorem \ref{thm:tame.est.DN.op}.
Then $\eta W \in H^{\frac32}(\S^2)$, 
and the trace at $\S^2$ of the gradients 
$(\grad w)|_{\S^2}, 
(\grad f_1)|_{\S^2}
(\grad D_x f_0)|_{\S^2}$ 
$\in H^{\frac12}(\S^2)$
is well-defined. 
\end{lemma}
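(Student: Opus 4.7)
The plan is to establish the three claims in order, with the elliptic regularity in the weighted spaces $X^{s,r}_0$ (from the forthcoming section on analyticity and tame estimates) doing most of the heavy lifting.

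For (i), the formula \eqref{def.W.B} expresses $W$ as a rational combination of $\grad_{\S^2}\psi$, $\grad_{\S^2} h$, $h$, $J$, and $G(h)\psi$. Since $\psi\in H^{5/2}(\S^2)$ and $h\in H^3(\S^2)\subset H^{5/2}(\S^2)$, we have $\grad_{\S^2}\psi\in H^{3/2}(\S^2)$ and $\grad_{\S^2}h\in H^2(\S^2)$, and by Theorem \ref{thm:tame.est.DN.op} the Dirichlet-Neumann operator satisfies $G(h)\psi\in H^{3/2}(\S^2)$. For small $\|h\|_{H^3(\S^2)}$, the function $J$ is bounded away from zero and $J,1/J\in H^2(\S^2)$. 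Since $H^{3/2}(\S^2)$ is a Banach algebra (the dimension of $\S^2$ being $2$, so $3/2>1$), each term defining $W$ lies in $H^{3/2}(\S^2)$, hence $W\in H^{3/2}(\S^2)$. Multiplying by $\eta\in H^3(\S^2)\subset H^{3/2}(\S^2)$ then yields $\eta W\in H^{3/2}(\S^2)$, proving (i).

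For (ii), I would obtain the three regularity statements as follows. First, the elliptic problem \eqref{punct.modif.Lap.06} is uniformly elliptic on $B_1$ with coefficients in the matrix $P_0$ depending only on $\mE_0 h$ and $\grad \mE_0 h$; since $h\in H^3(\S^2)$ and the boundary datum $\psi\in H^{5/2}(\S^2)$, the tangential-regularity estimate of Proposition \ref{prop:est.u.X.2.r.0} (applied with one extra degree of smoothness, i.e.\ the $X^{3,0}_0$ version of the same estimate) yields $f_0=\tilde\Phi\in H^3(\mathcal A)$ on some annulus $\mathcal A=\{c<|x|<1\}$, where $\mathcal A$ is determined by the cut-off in the definition of $X^{3,0}_0$. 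Consequently $D_x f_0=\la x,\grad f_0\ra\in H^2(\mathcal A)$. Next, $w$ solves the elliptic problem \eqref{lin.ell.pb.w} with Dirichlet datum $-\eta W\in H^{3/2}(\S^2)$ by (i); the same tangential regularity estimate in the $X^{2,0}_0$-version gives $w\in H^2(\mathcal A)$. Finally, since $\alpha=-\mE_0\eta/(1+\mE_0 h)$ is the $0$-homogeneous extension of $-\eta/(1+h)$, which belongs to $H^3(\S^2)$, one sees that $\alpha$ is as regular as needed on the compact annulus $\mathcal A$ (being bounded away from $0$), and the algebra property of $H^2(\mathcal A)$ applied to $\alpha\cdot D_x f_0$ places this product in $H^2(\mathcal A)$. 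Therefore $f_1=w-\alpha D_x f_0\in H^2(\mathcal A)$ by \eqref{def.w}, completing (ii).

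For (iii), the standard trace theorem on the smooth bounded domain $\mathcal A$ gives a continuous trace operator from $H^2(\mathcal A)$ onto $H^{3/2}(\S^2)$ (on the inner sphere we simply discard the datum), and hence first-order partial derivatives of $H^2(\mathcal A)$-functions have traces in $H^{1/2}(\S^2)$. Applying this to $w$, $f_1$, and $D_x f_0$ yields (iii).

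The main obstacle is the elliptic step in (ii): one must invoke higher interior/boundary regularity for the divergence-form equation \eqref{punct.modif.Lap.06} with coefficients of Sobolev (not smooth) regularity, and crucially one must control the dependence of such estimates on $h$ only through $\|h\|_{H^3(\S^2)}<\delta_0$. Since the homogeneous extension $\mE_1\gamma$ is singular at the origin, the argument is localized to the annulus $\mathcal A$ away from $0$, which is precisely the role played by the cut-off in the $X^{s,r}_0$ norms of Section \ref{sec:analytic.tame.est.DN.op}; this is where the paper's technical framework for tangential Sobolev spaces is essential.
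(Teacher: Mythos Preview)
Your arguments for (i) and (iii) match the paper's. The problem is in (ii), specifically in how you obtain $D_x f_0 \in H^2(\mathcal A)$.

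You invoke an ``$X^{3,0}_0$ version'' of the estimate in Proposition \ref{prop:est.u.X.2.r.0} to conclude $f_0 \in H^3(\mathcal A)$, but no such estimate is available in the paper: the norms $X^{s,r}_k$ are defined only for $s\in\{0,1,2\}$ (see \eqref{def.norm.X.s.r.k}), and the underlying elliptic estimate in Lemma \ref{lemma:est.Lap.R.n.+} is proved only for $s\in\{0,1\}$, giving at most $H^{2,r}(\R^3_+)$ control. Extracting a third normal derivative would require differentiating $\div(P_0\nabla f_0)=0$ and hence controlling second derivatives of $P_0$; with only $h\in H^3(\S^2)$ this is not provided by the framework. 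So your route to $D_x f_0\in H^2(\mathcal A)$, and consequently your route to $f_1\in H^2(\mathcal A)$ via $f_1=w-\alpha D_x f_0$, is not justified.

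The paper avoids this difficulty by reversing the logic. It first gets $w\in H^2(\mathcal A)$ from the $X^{2,0}_0$ estimate applied to problem \eqref{lin.ell.pb.w} (as you do). It then gets $f_1\in H^2(\mathcal A)$ \emph{directly}, observing that $f_1=\tilde\Phi'(h)[\eta]$ is the Fr\'echet derivative of the solution map, and Proposition \ref{prop:est.u.X.2.r.0} also bounds $u'(h)[\eta]$ in $X^{2,0}_0$ when $\eta\in H^3(\S^2)$. Hence $\alpha D_x f_0=w-f_1\in H^2(\mathcal A)$ by triangular inequality. The key trick is then to specialize to $\eta=-(1+h)$, for which $\alpha=1$ by \eqref{alpha.candidate.immediately}, so $D_x f_0\in H^2(\mathcal A)$ follows immediately. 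This extracts the regularity of $D_x f_0$ from the already-proved $X^{2,0}_0$ estimates, without any $H^3$ control on $f_0$.
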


\begin{proof}
For $\| h \|_{H^3(\S^2)} < \delta_0$, 
$\psi \in H^{\frac52}(\S^2)$ and $\eta \in H^{\frac32}(\S^2)$,
one has $J \in H^{\frac32}(\S^2)$ by Lemma \ref{lemma:other.terms}, 
$G(h)\psi \in H^{\frac32}(\S^2)$ by Theorem \ref{thm:tame.est.DN.op}, 
and $W, \eta W \in H^{\frac32}(\S^2)$ 
by \eqref{def.W.B}, \eqref{embedding.Sd}, and Lemma \ref{lemma:est.tools.S.n-1}. 
By Lemma \ref{lemma:sol.u}, $\tilde \Phi_\e \in H^3(B_1)$, 
and therefore $f_0, f_1 \in H^3(B_1)$. Hence $w \in H^2(B_1)$. 
As a consequence, $\grad w$, $\grad f_1$, $\grad D_x f_0$ all belong to $H^1(B_1)$, 
and their trace at $\S^2$ is well-defined. 
\end{proof}

Lemma \ref{lemma:trace.ok.for.the.good.unknown} implies that 
any identity in $B_1$ involving $\grad D_x f_0, \grad w, \grad f_1$ 
also holds on $\S^2$ by taking the trace of the involved functions.
To calculate the derivative of $G(h)\psi$ with respect to $h$ in direction $\eta$, 
we differentiate the first identity in \eqref{formula.G.P}. 
Recalling the definition of $P_0, P_1, f_0, f_1, Z$ in \eqref{def.P0.P1.f0.f1}, \eqref{formula.G.w}, 
one has 
\begin{align}
G'(h)[\eta]\psi 
& = 
\underbrace{Z'(h)[\eta] \la P_0 \grad f_0 , x \ra}_{E_1}  
+ \underbrace{Z \la P_1 \grad f_0 , x \ra }_{E_2} 
+ \underbrace{Z \la P_0 \grad f_1 , x \ra}_{E_3}
\quad \text{on } \S^2.
\label{formula.der.G.01}
\end{align}

\emph{Calculation of $E_1$.} 
By the definition \eqref{formula.G.w} of $Z$, we have
\[ 
Z'(h)[\eta] = - \, \frac{ \{ 2 (1 + h)^2 + |\grad_{\S^2} h|^2 \} \eta 
+ (1 + h) \la \grad_{\S^2} h , \grad_{\S^2} \eta \ra  }{ (1 + h)^2 J^3 } 
\]
on $\S^2$, and, by \eqref{formula.G.P}, 
$\la P_0 \grad f_0 , x \ra = (1+h) J G(h)\psi$. 
Hence $E_1$ in \eqref{formula.der.G.01} is 
\begin{equation} 
E_1 = - \frac{ \{ 2 (1 + h)^2 + |\grad_{\S^2} h|^2 \} G(h)\psi }{ (1 + h) J^2 } \, \eta
- \frac{ G(h)\psi }{ J^2 } \,\la \grad_{\S^2} h , \grad_{\S^2} \eta \ra . 
\label{gomma.1}
\end{equation} 

\emph{Calculation of $E_2$.} 
By formula \eqref{punct.P.1}, recalling notation \eqref{def.D.x}, 
we calculate 
\begin{equation} \label{P1.grad.f0.x}
\la P_1 \grad f_0 , x \ra 
= \eta D_x f_0  
- \la \grad_{\S^2} \eta, \grad f_0 \ra 
+ \Big( \frac{ 2 \la \grad_{\S^2} h, \grad_{\S^2} \eta \ra }{ 1 + h } 
- \frac{ |\grad_{\S^2} h|^2 \eta }{ (1 + h)^2 } \Big) D_x f_0
\end{equation}
on $\S^2$. 
Now $D_x f_0$ on $\S^2$ is given by \eqref{Dx.f0.on.S2}, and 
$\la \grad_{\S^2} \eta, \grad f_0 \ra 
= \la \grad_{\S^2} \eta, \grad_{\S^2} \psi \ra$  
on $\S^2$ because 
$f_0 = \psi$ on $\S^2$. 
Hence $E_2$ in \eqref{formula.der.G.01} is 
\begin{align}
E_2 & = \frac{ W }{ J } \Big( 1 - \frac{|\grad_{\S^2} h|^2}{(1+h)^2} \Big) \eta  
- \frac{ \la \grad_{\S^2} \psi, \grad_{\S^2} \eta \ra }{ (1+h) J }
+ \frac{ 2 W \la \grad_{\S^2} h , \grad_{\S^2} \eta \ra }{ (1+h)J }.
\label{gomma.2}
\end{align}

\emph{Calculation of $E_3$.}
To calculate the term $E_3$ in \eqref{formula.der.G.01}, 
we use the definition \eqref{def.w} of $w$ to write $f_1$ as the difference 
$f_1 = w - \a D_x f_0$ in $B_1$. Thus, 
\begin{equation} \label{E3.temp.11} 
\la P_0 \grad f_1 , x \ra = \la P_0 \grad w , x \ra - \la P_0 \grad (\a D_x f_0) , x \ra
\end{equation}
in $B_1$, and therefore, 
by the discussion following Lemma \ref{lemma:trace.ok.for.the.good.unknown}, 
also on $\S^2$. 
Hence, by \eqref{formula.G.w} and \eqref{E3.temp.11}, 
the term $E_3$ in \eqref{formula.der.G.01} is 
\begin{equation} \label{E3.temp.01}
E_3 = - G(h) (\eta W) - Z \la P_0 \grad (\a D_x f_0) , x \ra 
\end{equation}
on $\S^2$.
Since $\a$ and $D_x f_0$ are scalar functions, one has 
\begin{equation}
\la P_0 \grad (\a D_x f_0) , x \ra
= (D_x f_0) \la P_0 \grad \a , x \ra 
+ \a \la P_0 \grad D_x f_0 , x \ra 
\label{E3.temp.13}
\end{equation}
in $B_1$, and therefore on $\S^2$. 
Hence, by \eqref{E3.temp.01} and \eqref{E3.temp.13}, 
\begin{equation} \label{E3.temp.02}
E_3 = - G(h) (\eta W) 
- \underbrace{ Z (D_x f_0) \la P_0 \grad \a , x \ra }_{E_4}
- \underbrace{ Z \a \la P_0 \grad D_x f_0 , x \ra }_{E_5}
\end{equation}
on $\S^2$. 
To study $E_4$, by \eqref{punct.P.0}, \eqref{alpha.candidate.immediately}, we calculate
\begin{equation*} 
\la P_0 \grad \a , x \ra 
= \frac{ \la \grad_{\S^2} h , \grad_{\S^2} \eta \ra }{ 1+h } 
- \frac{ |\grad_{\S^2} h|^2 \eta }{ (1+h)^2 }  
\end{equation*}
on $\S^2$, while $D_x f_0$ and $Z$ on $\S^2$ are in \eqref{Dx.f0.on.S2}, \eqref{formula.G.w}.
Therefore 
$E_4$ in \eqref{E3.temp.02} is 
\begin{equation}
E_4 = \frac{ W \la \grad_{\S^2} h , \grad_{\S^2} \eta \ra }{ (1+h)J } \, 
- \frac{ |\grad_{\S^2} h|^2 W \eta }{ (1+h)^2 J }.
\label{E4.temp.04} 
\end{equation}
The term $E_5$ in \eqref{E3.temp.02} contains derivatives of $f_0$ of second order; 
we use the identity $\div (P_0 \grad f_0) = 0$ 
to express them in terms of $h,\psi$.
Denoting $g = P_0 \grad f_0$, and $g_j$ its $j$-th component, 
one has 
\begin{align*}
0 = \div g(y) 
& = \sum_j \la \grad g_j(y) , e_j \ra  
= \sum_j \la \Pi_{T_x(\S^2)} [ \grad g_j(y) ], e_j \ra  
+ \sum_j \la \grad g_j(y) , x \ra \la x , e_j \ra  
\end{align*}
for $y \in B_1$, $x \in \S^2$. 
Taking the trace at the sphere, i.e., $y \in \S^2$, 
and recalling \eqref{def:tangdiv}, we find
\begin{align} \label{0=div}
0 = \div (P_0 \grad f_0) 
= \div_{\S^2} (P_0 \grad f_0) + \la x , D_x (P_0 \grad f_0) \ra
\end{align}
on $\S^2$. 
We recall that the tangential divergence $\div_{\S^2} (P_0 \grad f_0)$ 
depends only on the restriction of $P_0 \grad f_0$ to $\S^2$, 
which now we calculate.
By \eqref{Dx.f0.on.S2}, and because $f_0 = \psi$ on $\S^2$, one has 
\begin{equation} \label{grad.f0.S2}
\grad f_0 = \grad_{\S^2} f_0 + \la \grad f_0, x \ra x 
= \grad_{\S^2} f_0 + (D_x f_0) x 
= \grad_{\S^2} \psi + W (1+h) x
\end{equation}
on $\S^2$. 
By \eqref{grad.f0.S2}, 
using formulas \eqref{punct.P.0}, \eqref{def.W.B}, \eqref{def:J} of $P_0, W, J$, 
we obtain 
\begin{align}
(P_0 \grad f_0)|_{\S^2}
& = (1+h) ( \grad_{\S^2} \psi - W \grad_{\S^2} h ) + (1+h) J (G(h) \psi)\, x
\label{P0.grad.f0.S2}
\end{align}
on $\S^2$
(note that the $x$ component in \eqref{P0.grad.f0.S2} is also given by \eqref{formula.G.P}). 
Now we consider the scalar product $\la x , D_x (P_0 \grad f_0) \ra$ in $B_1$.   
Since $P_0$ is a $0$-homogeneous function of $x$ in the exterior set $|x| > \frac12$, 
one has $D_x P_0 = 0$ in that set, and we calculate 
\begin{align*}
D_x (P_0 \grad f_0) 
& = (D_x P_0) \grad f_0 + P_0 D_x \grad f_0 
= P_0 D_x \grad f_0 
= P_0 \grad D_x f_0 - P_0 \grad f_0
\end{align*}
in the annnulus $\frac12 < |x| < 1$, and therefore also on $\S^2$. 
Hence, taking the scalar product with $x$, 
and using \eqref{formula.G.P} (or \eqref{P0.grad.f0.S2}) 
to write $\la x, P_0 \grad f_0 \ra$, 
we get 
\begin{equation}  \label{E3.temp.03}
\la x, D_x (P_0 \grad f_0) \ra = \la x, P_0 \grad D_x f_0 \ra - (1+h) J G(h)\psi
\end{equation}
on $\S^2$. 
By \eqref{E3.temp.03}, \eqref{0=div}, \eqref{P0.grad.f0.S2}, we get 
\begin{equation}  \label{E3.temp.04}
\la x, P_0 \grad D_x f_0 \ra 
= (1+h) J G(h) \psi 
- \div_{\S^2} \{ ( \ref{P0.grad.f0.S2} ) \}. 
\end{equation}
Moreover, by \eqref{def:tangdiff}, \eqref{def:tangdiv}, we have
$\div_{\S^2} \{ \ph(x) x \} = 2 \ph(x)$ for any scalar function $\ph$ on $\S^2$; 
we apply it to $\ph = (1+h) J G(h)\psi$.  
Therefore, using \eqref{E3.temp.04}, \eqref{P0.grad.f0.S2} 
and formulas \eqref{formula.G.w}, \eqref{alpha.candidate.immediately} of $Z, \alpha$, 
the term $E_5$ in \eqref{E3.temp.02} is  
\begin{equation}  \label{E3.temp.06}
E_5 = \Big( \frac{ \div_{\S^2} \{ (1+h) ( \grad_{\S^2} \psi - W \grad_{\S^2} h ) \} }{ (1+h)^2 J } 
+ \frac{ G(h)\psi }{ 1+h } \Big) \eta.
\end{equation}

By \eqref{formula.der.G.01}, 
\eqref{gomma.1}, 
\eqref{gomma.2}, 
\eqref{E3.temp.02},
\eqref{E4.temp.04}, 
\eqref{E3.temp.06}, 
we obtain \eqref{G.der.02} with $W$ in \eqref{def.W.B} and 
\begin{align*}
b & = - \frac{ \{ 2 (1 + h)^2 + |\grad_{\S^2} h|^2 \} G(h)\psi }{ (1 + h) J^2 } 
+ \frac{ W }{ J } - \frac{ \div_{\S^2} \{ (1+h) ( \grad_{\S^2} \psi - W \grad_{\S^2} h ) \} }{ (1+h)^2 J } 
- \frac{ G(h)\psi }{ 1+h }, 
\\ 
B & = \Big( \frac{ W }{ (1+h) J } - \frac{ G(h)\psi }{ J^2 } \Big) \grad_{\S^2} h 
- \frac{ \grad_{\S^2} \psi }{ (1+h) J }\,.
\end{align*}
Note that, in computing $b$, the terms $\frac{ W }{ J } \frac{|\grad_{\S^2} h|^2}{(1+h)^2}$ 
in $E_2$ in \eqref{gomma.2} and in $E_4$ in \eqref{E4.temp.04} cancel out,  
and also note that, in computing the coefficient of $\grad_{\S^2} h$ in $B$, 
the term $\frac{W}{(1+h)J}$ appears with coefficient 2 in $E_2$ and with coefficient 1 in $E_4$. 
Finally, using the definition of $W,J$ in \eqref{def.W.B}, \eqref{def:J},  
we obtain the formulas for $B,b$ in \eqref{def.W.B}, \eqref{def.b}.
The proof of Theorem \ref{thm:shape.der.DN.op} is complete.

\subsection{Proof via geometric argument}
\label{subsec:proof.via.geometric.argument}

In this section we give another, independent proof of formula \eqref{G.der.02}, 
using an argument relying on geometry. Here we assume that $\Omega$ is star-shaped.  
The proof is divided in different subsections. 

We consider $\gamma_\e, \Om_\e, \Phi_\e$ 
as in \eqref{def.gamma.epsilon}, \eqref{Lap.problem.epsilon}. 
By Theorem \ref{thm:tame.est.DN.op} (or arguing as in \cite[Proof of Proposition 8.1]{Cagnetti.Mora.Morini}), 
the map $\e \mapsto \Phi_\e$ is smooth. 
We recall that $\dot \Phi$ defined in \eqref{tavolo.04} is harmonic in $\Omega$. 
Similarly we denote $\dot \nu = \frac{d}{d \e} \big|_{\e = 0} \nu_{\Om_\e}(\gamma_\e(x))$ for $x \in \S^2$.  
We also define, like in Lemma \ref{lem:hadamard}, 
the vector field $X: \pa \Omega \to  \R^3$ associated with the change of the domain 
such that, for $x \in \S^2$, 
\begin{equation}\label{eq:velocityfield2}
X(\gamma(x)) =  \frac{d}{d \e} \Big|_{\e = 0} \gamma_\e(x) =  \eta(x) \, x,
\end{equation}
which is \eqref{X.at.gamma.S2}.
Using these notations we may write the Dirichlet-Neumann operator as 
\[
G(h+ \e \eta)\psi = \la (\nabla \Phi_\e)(\gamma_\e(x)) , \nu_{\Om_\e} \ra. 
\]
We may then write \eqref{eq:deri:diri-neumann}  by differentiating the above and have 
\begin{equation}\label{eq:diri-neu-lin2}
G'(h)[\eta] \psi =  \overbrace{\la \nabla \dot \Phi(\gamma(x)),  \nu_\Om \ra}^{= A_1} +   \overbrace{\la D^2 \Phi(\gamma(x)) \, X(\gamma(x))  ,   \nu_\Om \ra}^{= A_2}  + \overbrace{\la \nabla \Phi(\gamma(x)) ,  \dot \nu(\gamma(x)) \ra }^{= A_3}.
\end{equation}
The calculations for each term is rather cumbersome. We will treat them separately in different subsections. 

\subsubsection{Calculations of the term $A_1$} 

We begin by calculating the term $A_1= \la \nabla \dot \Phi(\gamma(x)),  \nu_\Om \ra$ in \eqref{eq:diri-neu-lin2} and show that it can be written as 
\begin{equation} \label{eq:A1}
A_1 = G(h)(-\eta W),
\end{equation}
where $W :\S^2 \to \R$ is in \eqref{def.W.B}. 
We begin by recalling that  $\dot \Phi$ is harmonic. 
Therefore, by the definition of the Dirichlet-Neumann operator in \eqref{def:Dirichlet-Neumann}, in order to identify the term  $A_1$ we need to show that $\dot \Phi(\gamma(x)) = -\eta  W(x)$ for $x \in \S^2$, 
where $W$ is given by \eqref{def.W.B}. 
To this aim we recall that $\Phi_\e$ has the boundary values $\Phi_\e(\gamma_\e(x)) = \psi(x)$ for all  $\e$ and $x \in \S^2$. We differentiate this with respect to $\e$ and obtain 
\[
0 = \frac{d}{d \e } \Big|_{\e = 0}  \Phi_\e(\gamma_\e(x))  = \dot \Phi(\gamma(x)) + \la \nabla \Phi(\gamma(x)) ,  x \ra \, \eta (x) .
\]
Let us split the gradient of $\Phi$ into the normal and the tangential components as in \eqref{def:normal-tangential-2} and by the above it holds 
\begin{equation} \label{eq:phidot1}
 \dot \Phi(\gamma(x)) = -\big( \la  \nabla \Phi(\gamma(x)) , \nu_\Om \ra   \,  \la  \nu_\Om(\gamma(x)), x \ra   + \la \nabla_{\pa \Om} \Phi(\gamma(x)) ,  x \ra \big) \, \eta(x). 
\end{equation}

We may simplify this by using the definition \eqref{def:Dirichlet-Neumann}, i.e., 
$\la  \nabla \Phi(\gamma(x)) , \nu_\Om \ra = G(h)\psi$ and the formula of the normal  \eqref{nu.h} which implies  $\la \nu_\Om(\gamma(x))   , x  \ra  = \frac{1+h}{J}$, 
where $J$ is defined in \eqref{def:J}. 
Therefore the first term on the RHS in \eqref{eq:phidot1} is 
\[
 \la  \nabla \Phi(\gamma(x)) , \nu_\Om \ra   \,  \la  \nu_\Om(\gamma(x)), x \ra 
= \frac{1+h}{J}  \, G(h)\psi.
\]
To deal with the last term in \eqref{eq:phidot1}, we use \eqref{eq:tangential-part-Phi} and get 
\[
\la  \nabla_{\pa \Omega} \Phi(\gamma(x)), x \ra  =  \frac{\la \nabla_{\S^2} \psi, \nabla_{\S^2} h \ra}{J^2}.
\]
The two above equalities and  \eqref{eq:phidot1} imply the formulas \eqref{eq:A1} and \eqref{def.W.B}.

\subsubsection{Calculations of the term $A_2$} 
\label{subsec:A2}

This term is the most cumbersome to calculate and we show that it has the form 
\begin{equation} \label{eq:A2}
\begin{split}
A_2 =& - \eta\,  \frac{\mathcal L_{h}(\psi)}{(1+h)J} + \frac{\eta}{J^2} \la \nabla_{\S^2}\big( G(h)\psi \big),\nabla_{\S^2} h \ra  \\
&+ \frac{\eta}{(1+h)J^{3}} \left( \Delta_{\S^2} h -  2\frac{\la (D_{\S^2}^2 h) \nabla_{\S^2} h ,\nabla_{\S^2} h \ra  }{J^2} - \frac{2(1+h)|\nabla_{\S^2} h|^2}{J^2}  -(1+h) \right) \la\nabla_{\S^2} \psi, \nabla_{\S^2} h \ra\\
&+ \frac{\eta}{(1+h)J^{3}}\la (D_{\S^2}^2 h) \nabla_{\S^2} h ,\nabla_{\S^2} \psi \ra - \eta\, \frac{1+h}{J}H(h) \, G(h)\psi.
\end{split}
\end{equation}

 We begin by splitting vector field $X$ in \eqref{eq:velocityfield2} into the normal and tangential components as in \eqref{def:normal-tangential-2}. Recalling that $X(\gamma(x)) = \eta(x) \, x$ and the formula for the normal \eqref{nu.h} we have 
\begin{equation}
\label{eq:velo-fields}
\begin{split}
 X_\nu(\gamma(x)) &=    \frac{\eta(x)}{J} \la ( (1+h)x - \nabla_{\S^2} h  x),  \nu_\Om(\gamma(x)) \ra= \eta(x) \, \left(\frac{1+h}{J}\right)\nu_\Om(\gamma(x)),\\
X_{\pa \Om}(\gamma(x)) &= (X - X_\nu)(\gamma(x)) = \eta(x) \, x - \eta(x) \, \left(\frac{1+h}{J}\right) \left( \frac{(1+h)x - \nabla_{\S^2} h}{J}\right)  \\
&= \eta(x) \frac{|\nabla_{\S^2} h|^2}{J^2} x + \eta(x) \left(\frac{1+h}{J^2}\right) \nabla_{\S^2} h.
\end{split}
\end{equation}
Then we write 
\begin{equation}
\label{eq:A2-split}
A_2 =  \la D^2 \Phi(\gamma(x)) \, X ,  \nu_\Om\ra   = \la X , \nu_\Om \ra \, \la D^2 \Phi(\gamma(x))  \nu_\Om ,  \nu_\Om \ra+  \la D^2 \Phi(\gamma(x)) \, X_{\pa \Om} ,  \nu_\Om\ra.
\end{equation}

Let us first calculate the first term on the RHS of  \eqref{eq:A2-split}. First, we have by \eqref{eq:velo-fields} that 
\[
\la X , \nu_\Om \ra = X_\nu =  \eta(x) \, \left(\frac{1+h}{J}\right). 
\]
We proceed by recalling that the function $\Phi$ is harmonic in $\Omega$ and have by   the formula \eqref{eq:laplace-extension} 
\[
 \la D^2 \Phi(\gamma(x))  \nu_\Om ,  \nu_\Om \ra = - \Delta_{\pa \Om} \Phi - H_{\Omega} \la \nabla \Phi, \nu_{\Om} \ra .
\]
The mean curvature is  calculated in Lemma \ref{lem:meancurvature}, the Laplace-Beltrami is calculated in Lemma \ref{lem:LapBel} (recall that $\Phi(\gamma(s)) = \psi(x)$) 
and by definition of the Dirichlet-Neumann operator \eqref{def:Dirichlet-Neumann} 
it holds $\la \nabla \Phi, \nu_{\Om}) \ra(\gamma(x))  = G(h)\psi$.
Therefore it holds by \eqref{eq:Lap-Bel-2}
\begin{equation}
\label{eq:A2-0}
\begin{split}
\la X , \nu_\Om \ra \, \la D^2 \Phi(\gamma(x))  \nu_\Om ,  \nu_\Om \ra &= - \eta \frac{1+h}{J}\big( \Delta_{\pa \Om} \Phi +  H_{\Omega}(\gamma(x)) G(h) \psi \big)\\
&= -\eta  \frac{\mathcal L_h(\psi)}{(1+h)J}  + \eta \left( \frac{\mathcal L_h( h)}{(1+h)J^{3}} - \frac{|\nabla_{\S^2} h|^2}{J^{5}} \right) \la\nabla_{\S^2} \psi, \nabla_{\S^2} h \ra \\
&\,\,\,\,\,\,\,- \eta \frac{1+h}{J} H(h) \, G(h)\psi,
\end{split}
\end{equation}
where the operator $\mathcal L_h$ is defined in \eqref{def:elliptic-operator}. 

Let us then calculate the last term in \eqref{eq:A2-split}.  We differentiate  $\la \nabla \Phi, \nu_{\Om} \ra $  in the direction of $X_{\pa \Om}$, which of course is on the tangent plane,  and have 
\[
\la \nabla_{\pa \Om} \la \nabla \Phi, \nu_{\Om} \ra , X_{\pa \Om}\ra   =   \la D^2 \Phi \, X_{\pa \Om} ,  \nu_\Om\ra + \la  D_{\pa \Om} \nu_{\Om} \, X_{\pa \Om}, \nabla \Phi \ra. 
\]
Therefore 
\begin{equation}
\label{eq:A2-1}
\la D^2 \Phi \, X_{\pa \Om} ,  \nu_\Om\ra = \la \nabla_{\pa \Om} \la \nabla \Phi, \nu_{\Om} \ra , X_{\pa \Om}\ra  - \la  D_{\pa \Om} \nu_{\Om} \, X_{\pa \Om}, \nabla \Phi \ra.
\end{equation}

Let us treat the first term on the RHS of \eqref{eq:A2-1}. First we write 
\[
 \la \nabla_{\pa \Om} \la \nabla \Phi, \nu_{\Om} \ra , X_{\pa \Om}\ra  = \la \nabla_{\pa \Om} \la \nabla \Phi, \nu_{\Om} \ra, X \ra.
\] 
We have by \eqref{eq.1.in.lem:formulas}, 
by $\la \nabla \Phi, \nu_{\Om} \ra(\gamma(x)) = \big( G(h)\psi \big)(x)$, 
and by $X(\gamma(x)) = \eta(x)\, x$ that 
\begin{equation}
\label{eq:A2-2}
\la \nabla_{\pa \Om} \la \nabla \Phi, \nu_{\Om} \ra, X_{\pa \Om}\ra = \la \nabla_{\pa \Om} \la \nabla \Phi, \nu_{\Om} \ra, X \ra = \frac{\eta(x)}{J^2} \la \nabla_{\S^2}\big( G(h)\psi \big), \nabla_{\S^2} h \ra .
\end{equation}

We begin by treating the second term on the RHS of \eqref{eq:A2-1} by recalling that 
the differential of the normal is the second fundamental form $B_{\pa \Om} =  D_{\pa \Om} \nu_{\Om}$. 
In particular, $D_{\pa \Om} \nu_{\Om}$ is symmetric, because  
$(D_{\pa \Om} \nu_\Om) \nu_\Om = 0$,
and $D \nu_\Om$ is the Hessian of the signed distance, therefore it is a symmetric matrix.
Hence for all vector fields $F, \Psi$ one has 
\[
\la (D_{\pa \Om} \nu_\Om) F , \Psi \ra 
= \la (D_{\pa \Om} \nu_\Om) F , \Psi_{\pa \Om} \ra.
\]
Moreover, by definition, $(D_{\pa \Om} \nu_\Om) F = (D_{\pa \Om} \nu_\Om) F_{\pa \Om}$, 
so that $D_{\pa \Om} \nu_{\Om}$ is symmetric.
Thus 
\[
\la  D_{\pa \Om} \nu_{\Om} \, X_{\pa \Om}, \nabla \Phi \ra = \la  D_{\pa \Om} \nu_{\Om} \nabla_{\pa \Om} \Phi , X_{\pa \Om} \ra .
\]
We write the normal by using the vector field $n$ defined in \eqref{def:fieldN} 
as $\nu_{\Om} = \frac{n}{|n|}$. 
Then it holds $ D_{\pa \Om} \nu_{\Om} = \frac{1}{|n|} D_{\pa \Om} n + n \otimes (\nabla_{\pa \Om} 1/|n|) $ 
and using $\la n, X_{\pa \Om}\ra = 0$ we have 
\[
\la  D_{\pa \Om} \nu_{\Om} \nabla \Phi , X_{\pa \Om} \ra  
= \frac{1}{|n|} \la  D_{\pa \Om} n  \, \nabla_{\pa \Om} \Phi , X_{\pa \Om} \ra.   
\]
We use \eqref{eq.1.in.lem:formulas} 
and $ \Phi(\gamma(x)) = \psi(x)$ to deduce that
\[\begin{split}
(D_{\pa \Om} n)  (\gamma(x)) \, (\nabla_{\pa \Om} \Phi) (\gamma(x)) &
=D_{\pa \Om} n(\gamma(x)) \left( \frac{\nabla_{\S^2}\psi(x)}{1+h} 
+ \frac{\langle \nabla_{\S^2} \psi , \nabla_{\S^2} h\rangle}{(1+h) J} \nu_{\Om}(\gamma(x))  \right)
\\
&= \frac{1}{1+h} D_{\pa \Om} n(\gamma(x)) \nabla_{\S^2}\psi(x)  
\end{split}
\]
because $ D_{\pa \Om} n (y) \nu_\Om (y) = 0$, 
and then \eqref{eq.2.in.lem:formulas} and have 
\[
D_{\pa \Om} n(\gamma(x)) \nabla_{\S^2}\psi  
= \frac{1}{1+h} D_{\S^2} \tilde n \nabla_{\S^2}\psi 
- \frac{\la  \nabla_{\S^2}\psi , \nabla_{\S^2} h\ra }{(1+h) J^2} D_{\S^2} \tilde n \, \nabla_{\S^2} h.  
\]
Recall that  by \eqref{eq:velo-fields}  it holds 
$X_{\pa \Om}(\gamma(x)) =  \eta \frac{|\nabla_{\S^2} h|^2}{J^2} x 
+ \eta \left(\frac{1+h}{J^2}\right) \nabla_{\S^2} h$. 
Then  by $|n|(\gamma(x)) = J$ we may write 
\begin{equation}
\label{eq:A2-3}
\begin{split}
\la  D_{\pa \Om} \nu_{\Om} \nabla \Phi , X_{\pa \Om} \ra (\gamma(x)) 
&= \frac{1}{(1+h) J}\la  D_{\pa \Om} n(\gamma(x)) \nabla_{\S^2} \psi , X_{\pa \Om}(\gamma(x)) \ra  \\
&= \eta(x) \frac{|\nabla_{\S^2} h|^2}{(1+h)^2J^{3}} \Big( \la D_{\S^2} \tilde n \nabla_{\S^2}\psi, x \ra 
- \frac{\la  \nabla_{\S^2}\psi , \nabla_{\S^2} h\ra}{J^2}  \la D_{\S^2} \tilde n \nabla_{\S^2}h, x  \ra  \Big)
\\
&\,\,\,\,\,\,+   \frac{\eta}{(1+h)J^{3}} \Big( \la D_{\S^2} \tilde n \nabla_{\S^2}\psi, \nabla_{\S^2} h \ra 
- \frac{\la  \nabla_{\S^2}\psi , \nabla_{\S^2} h\ra}{J^2}  \la D_{\S^2} \tilde n \nabla_{\S^2}h , \nabla_{\S^2} h \ra  \Big).
\end{split}
\end{equation}
By \eqref{plants.02} it holds $(D_{\S^2} \tilde n)^T x =  2 \nabla_{\S^2} h$. Using this and recalling $J = \sqrt{(1+h)^2 + |\nabla_{\S^2} h|^2}$ we may write the first term on the RHS of \eqref{eq:A2-3} as
\begin{equation}
\label{eq:A2-4}
\begin{split}
\eta \frac{|\nabla_{\S^2} h|^2}{(1+h)^2J^{3}}&\left( \la D_{\S^2} \tilde n \nabla_{\S^2}\psi, x \ra 
- \frac{\la  \nabla_{\S^2}\psi , \nabla_{\S^2} h\ra}{J^2}  \la D_{\S^2} \tilde n \nabla_{\S^2}h, x  \ra  \right) 
= 2\eta \frac{|\nabla_{\S^2} h|^2}{J^{5}} \la  \nabla_{\S^2}\psi , \nabla_{\S^2} h\ra .
\end{split}
\end{equation}
We use \eqref{eq:meancurv2} and have 
\[
\begin{split}
\la D_{\S^2} \tilde n \nabla_{\S^2}\psi, \nabla_{\S^2} h \ra &= - \la D_{\S^2}^2 h \nabla_{\S^2} h ,\nabla_{\S^2}\psi \ra + (1+h)\la \nabla_{\S^2}\psi, \nabla_{\S^2} h \ra, \\
\la D_{\S^2} \tilde n \nabla_{\S^2}h , \nabla_{\S^2} h \ra &= - \la D_{\S^2}^2 h \nabla_{\S^2} h ,\nabla_{\S^2} h \ra + (1+h)|\nabla_{\S^2} h |^2.  
\end{split}
\]
We may then write the last term in  \eqref{eq:A2-3} as 
\begin{equation}
\label{eq:A2-5}
\begin{split}
\frac{\eta(x)}{(1+h)J^{3}} &\left( \la D_{\S^2} \tilde n \nabla_{\S^2}\psi, \nabla_{\S^2} h \ra 
- \frac{\la  \nabla_{\S^2}\psi , \nabla_{\S^2} h\ra}{J^2}  \la D_{\S^2} \tilde n \nabla_{\S^2}h , \nabla_{\S^2} h \ra  \right)\\
&= -  \frac{\eta(x)}{(1+h)J^{3}} \la D_{\S^2}^2 h \nabla_{\S^2} h ,\nabla_{\S^2}\psi \ra + \frac{ \eta(x)}{J^{3}}\la \nabla_{\S^2}\psi, \nabla_{\S^2} h \ra\\
&\,\,\,\,\,\,\,\,\,+ \frac{\eta(x)}{(1+h)J^{5}} \la D_{\S^2}^2 h \nabla_{\S^2} h ,\nabla_{\S^2} h \ra \la \nabla_{\S^2}\psi, \nabla_{\S^2} h \ra - \eta(x)\frac{|\nabla_{\S^2} h |^2}{J^{5}} \la \nabla_{\S^2}\psi, \nabla_{\S^2} h \ra .
\end{split}
\end{equation}
The formula \eqref{eq:A2} then follows by combining  \eqref{eq:A2-split}, \eqref{eq:A2-0},  
\eqref{eq:A2-1}, \eqref{eq:A2-2}, \eqref{eq:A2-3}, \eqref{eq:A2-4} and \eqref{eq:A2-5}.

\subsubsection{Calculations of the term $A_3$} 

We show that the term $A_3 = \la \nabla \Phi(\gamma(x)) ,  \dot \nu(\gamma(x)) \ra $ in \eqref{eq:diri-neu-lin2}, 
where  $\dot \nu = \frac{d}{d \e} \big|_{\e = 0} \nu_{\Om_\e}(\gamma_\e(x))$ can be written as 
\begin{equation}
\label{eq:A3_0}
A_3 = \eta  \frac{\la \nabla_{\S^2} \psi, \nabla_{\S^2} h \ra }{J^{3}}  - \frac{\la \nabla_{\S^2} \psi, \nabla_{\S^2} \eta \ra}{(1+h)J} + \frac{\la \nabla_{\S^2} \psi, \nabla_{\S^2} h \ra\la \nabla_{\S^2} \eta, \nabla_{\S^2} h \ra}{(1+h)J^{3}}.
\end{equation}
To this aim we use the result in \cite{Cagnetti.Mora.Morini}, equation (8.6), 
where a general formula for calculation of $\dot \nu$ is derived, and we have 
\[
\la \nabla \Phi(\gamma(x)) , \dot \nu(\gamma(x))  \ra = -\la D_{\pa \Omega} X(\gamma(x)) \nabla_{\pa \Omega}  \Phi(\gamma(x)) ,  \nu_{\pa \Omega}(\gamma(x))\ra .\\
\] 
Recall that $X(\gamma(x)) =  \eta(x) \, x$. We then have $D_{\S^2} \tilde X(x) = \eta I_{\S^2} + x \otimes \nabla_{\S^2} \eta$, where $I_{\S^2} = I - x \otimes x$, and therefore by applying the second equality in Lemma \ref{lem:formulas} we deduce 
\[
\begin{split}
D_{\pa \Omega} X(\gamma(x)) = &\frac{\eta}{1+h} I_{\S^2} + \frac{1}{1+h}  x \otimes \nabla_{\S^2} \eta  - \frac{\eta(x)}{(1+h)J^2} \nabla_{\S^2}  h \otimes \nabla_{\S^2}  h \\
&-  \frac{\la \nabla_{\S^2}  \eta, \nabla_{\S^2} h\ra }{(1+h)J^2}x \otimes \nabla_{\S^2}  h +\frac{\eta(x)}{J^2} \nabla_{\S^2}  h \otimes x +\frac{\la \nabla_{\S^2}  \eta, \nabla_{\S^2} h\ra }{J^2}x \otimes x.
\end{split}
\]
Therefore it holds by the first equality in  Lemma \ref{lem:formulas}, by $\Phi(\gamma(x)) = \psi(x)$, and by the formula of the normal  \eqref{nu.h} that 
\[
\begin{split}
 -&\la D_{\pa \Omega} X(\gamma(x)) \nabla_{\pa \Omega}  \Phi(\gamma(x)) ,  \nu_{\pa \Omega}(\gamma(x))\ra = -\frac{1}{1+h}\la D_{\pa \Omega} X(\gamma(x)) \nabla_{\S^2}  \psi ,  \nu_{\pa \Omega}(\gamma(x))\ra\\
&= -\frac{1}{J}\la D_{\pa \Omega} X(\gamma(x)) \nabla_{\S^2}  \psi , x\ra + \frac{1}{(1+h)J}\la D_{\pa \Omega} X(\gamma(x)) \nabla_{\S^2}  \psi ,  \nabla_{\S^2}  h \ra\\
&= - \frac{\la \nabla_{\S^2} \psi, \nabla_{\S^2} \eta \ra}{(1+h)J} + \frac{\la \nabla_{\S^2} \psi, \nabla_{\S^2} h \ra\la \nabla_{\S^2} \eta, \nabla_{\S^2} h \ra}{(1+h)J^{3}} \\
& \,\,\,\,\,\,\,\,\,+\frac{\eta}{(1+h)^2J} \la \nabla_{\S^2} \psi, \nabla_{\S^2} h \ra -\frac{\eta|\nabla_{\S^2} h|^2}{(1+h)^2 J^{3}} \la \nabla_{\S^2} \psi, \nabla_{\S^2} h \ra .
\end{split}
\]
The formula \eqref{eq:A3_0} then follows from above and by recalling that $J = \sqrt{(1+h)^2 +|\nabla_{\S^2} h|^2}$ which then simplifies the last terms as
\[
\frac{\eta}{(1+h)^2} \la \nabla_{\S^2} \psi, \nabla_{\S^2} h \ra 
-\frac{\eta|\nabla_{\S^2} h|^2}{(1+h)^2 J^{3}} \la \nabla_{\S^2} \psi, \nabla_{\S^2} h \ra 
= \frac{\eta}{J^{3}} \la \nabla_{\S^2} \psi, \nabla_{\S^2} h \ra.
\]

\subsubsection{Conclusion}

By \eqref{eq:diri-neu-lin2}, 
\eqref{eq:A1}, 
\eqref{eq:A2}, 
\eqref{eq:A3_0}, 
we obtain \eqref{G.der.02} with $W,B$ in \eqref{def.W.B} 
and 
\begin{equation}\label{eq:shape-coeff}
\begin{split}
b & = - \frac{\mathcal L_{h}(\psi)}{(1+h)J} 
+ \frac{1}{J^2} \la \nabla_{\S^2}\big( G(h)(\psi)\big),\nabla_{\S^2} h \ra  
\\
& \quad \ 
+ \frac{ \la\nabla_{\S^2} \psi, \nabla_{\S^2} h \ra }{(1+h)J^{3}} \Big( \Delta_{\S^2} h -  2\frac{\la D_{\S^2}^2 h \nabla_{\S^2} h ,\nabla_{\S^2} h \ra  }{J^2} - \frac{2(1+h)|\nabla_{\S^2} h|^2}{J^2}  \Big) 
\\
& \quad \ 
+ \frac{ \la (D_{\S^2}^2 h) \nabla_{\S^2} h ,\nabla_{\S^2} \psi \ra  }{(1+h)J^{3}} 
- \frac{1+h}{J}H(h) \, G(h)\psi ,
\end{split}
\end{equation}
where the operator $\mathcal L_{h}$ is defined in \eqref{def:elliptic-operator}, 
and the mean curvature $H(h)$ is calculated in Lemma \ref{lem:meancurvature}.
Finally, a long, but straightforward, calculation 
shows that the function $b$ in \eqref{eq:shape-coeff} 
coincides with $b$ in \eqref{def.b}.
This completes the proof of Theorem \ref{thm:shape.der.DN.op}.

\subsection{Another proof of the Hamiltonian structure}
\label{subsec:Ham.by.der.G}

In this short subsection we provide an alternative proof of Proposition \ref{prop:quasi.Ham}, 
which makes use of formula \eqref{G.der.02} 
instead of relying on Hadamard's formula (Lemma \ref{lem:hadamard}) 
to compute $\pa_h K(h,\psi)$.

\begin{proof}[Second proof of Proposition \ref{prop:quasi.Ham}] 
The only difference with respect to the first proof concerns the calculation of $\pa_h K$.
Recalling the definition \eqref{kin.en.G} of $K(h,\psi)$, we have 
\begin{align}
\pa_h K(h,\psi)[\eta] 
& = \frac12 \int_{\S^2} \eta J(h) \psi G(h)\psi \, d\sigma 
+ \frac12 \int_{\S^2} (1+h) J'(h)[\eta] \psi G(h)\psi \, d\sigma 
\notag \\ 
& \quad \ 
+ \frac12 \int_{\S^2} (1+h) J(h) \psi G'(h)[\eta] \psi \, d\sigma,
\label{hopla.01}
\end{align}
where $J(h) = J$ is in \eqref{def:J}. 
The second term on the RHS of \eqref{hopla.01} is equal to 
\begin{align*}
& \frac12 \int_{\S^2} (1+h) \, 
\frac{ (1+h) \eta + \la \grad_{\S^2} h, \grad_{\S^2} \eta \ra }{ J } \, \psi G(h)\psi \, d\sigma 
\\
& \quad = \frac12 \int_{\S^2} \eta \, \frac{ (1+h)^2 }{ J } \psi G(h)\psi \, d\sigma 
- \frac12 \int_{\S^2} \eta \, \div_{\S^2} 
\Big( \frac{ (1+h) \grad_{\S^2} h }{ J } \, \psi G(h)\psi \Big) \, d\sigma, 
\end{align*}
where we have used the divergence theorem \eqref{div.thm.manifold} on $\S^2$.
For the last term of \eqref{hopla.01}, 
we use \eqref{G.der.02} to replace $G'(h)[\eta] \psi$
with $b \eta + \la B , \grad_{\S^2} \eta \ra - G(h)(W\eta)$, 
and then we apply the divergence theorem \eqref{div.thm.manifold} on $\S^2$
to the integral containing $\la B, \grad_{\S^2} \eta \ra$, 
and \eqref{G.self.adj} to that containing $G(h)(W \eta)$.
Hence the last term of \eqref{hopla.01} is 
\[
\frac12 \int_{\S^2} \eta \, \Big\{ (1+h) J \big( b \psi - W G(h)\psi \big) 
- \div_{\S^2} \Big( (1+h) J \psi B \Big) \Big\} \, d\sigma.
\]
Thus we have proved that 
\begin{align}
\pa_h K(h,\psi) & = \frac12 \Big( J + \frac{(1+h)^2}{J} \Big) \psi G(h)\psi 
+ \frac12 (1+h) J \big( b \psi - W G(h)\psi \big) 
\notag \\ 
& \quad \ 
- \frac12 \div_{\S^2} \Big( \frac{1+h}{J} (\grad_{\S^2} h) \psi G(h)\psi 
+ (1+h) J \psi B \Big).
\label{hopla.02}
\end{align}
By definition \eqref{def.W.B} of $W$ and $B$, 
the term in the tangential divergence in \eqref{hopla.02} is 
\[
\frac{1+h}{J} (\grad_{\S^2} h) \psi G(h)\psi + (1+h) J \psi B
= - \psi (\grad_{\S^2} \psi - W \grad_{\S^2} h).
\]
Inserting the last identity in \eqref{hopla.02}, and using definitions 
\eqref{def.W.B}, \eqref{def.b}, after some cancellations it remains
\begin{equation} \label{formula.pa.h.K}
\pa_h K(h,\psi) = \frac{ | \grad_{\S^2} \psi |^2 }{2} 
- \frac12 \Big( (1+h) G(h)\psi + \frac{ \la \grad_{\S^2} \psi , \grad_{\S^2} h \ra }{J} \Big)^2,
\end{equation}
and the proof is complete. 
\end{proof}

\section{Analyticity of the Dirichlet-Neumann operator}
\label{sec:analytic.DN.op}

In this section we show that the Dirichlet-Neumann operator depends analytically 
on the elevation function $h$ in Sobolev class.  
We will use Sobolev spaces $H^k(B_1)$ on the open unit ball $B_1$ with integer exponent $k \geq 0$, 
the space $H^1_0(B_1)$, 
the space $W^{1,\infty}(\S^2)$, 
and spaces $H^s(\S^2)$ on the unit sphere $\S^2$ with half-integer exponents $s = \frac{m}{2}$, $m \in \N$. 
We define the $H^k(B_1)$ norm of a function $u$ as the sum of the $L^2(B_1)$ norm 
of all weak derivatives $\partial^\alpha u$ of order $|\a| \leq k$. 
We define the $W^{1,\infty}(\S^2)$ norm of a function $u$ as the sum of the $L^\infty(\S^2)$ norm 
of $u$ and that of $\grad_{\S^2} u$. 
The $H^s(\S^2)$ norm of a function can be defined by localization, rectification and extension, 
using a partition of unity of the sphere $\S^2$,  
or by means of its orthogonal decomposition into spherical harmonics of $\S^2$, 
see \eqref{def.spec.norm}. These two definitions give equivalent norms 
(\cite{Lions.Magenes.volume.1}, Remark 7.6 in Section 7.3). 
For the theory of Sobolev spaces on open bounded domains and on smooth manifolds 
we refer, e.g., to Lions-Magenes \cite{Lions.Magenes.volume.1}, 
Taylor \cite{Taylor.volume.1}, Triebel \cite{Triebel.1983}. 
For any two function spaces $X,Y$, $\mL(X,Y)$ is the space of all bounded linear operators 
of $X$ into $Y$, endowed with the operator norm. In short, $\mL(X) = \mL(X,X)$.  

\emph{Notation}. In this section, $C$ denotes a constant, possibly different from line to line, 
depending on the regularity exponents $k,s$, and independent of the functions involved in the inequality. 

\medskip

We begin with recalling some classical results, not stated in a general version, 
but just in the form they are needed below. 

\begin{lemma}[Trace operator] 
\label{lemma:trace.op}
The restriction map $u \to {\mathtt T} u = u |_{\S^2}$ extends uniquely to a continuous linear map 
\[
\mathtt T : H^1(B_1) \to H^{\frac12}(\S^2),
\]
which maps continuously  $H^k(B_1)$ into $H^{k-\frac12}(\S^2)$ for all integer $k \geq 1$.
\end{lemma}

\begin{proof}
See, e.g., \cite{Taylor.volume.1}, Proposition 4.5 in Section 4.4
\end{proof}

\begin{lemma}[Harmonic extension operator, or Poisson integral map] 
\label{lemma:def.harmonic.ext.op} 
Given $f \in C^\infty(\S^2)$, there exists a unique 
$u \in C^\infty(B_1) \cap C^0(\overline{B_1}) \cap H^1(B_1)$ 
such that 
\[
\Delta u = 0 \ \  \text{in } B_1, \quad \ 
u = f \ \  \text{on } \S^2.
\]
The linear map $f \to u$ has a unique continuous extension 
\[
\mathtt{PI} : H^{\frac12}(\S^2) \to H^1(B_1),
\]
which maps continuously $H^{k+\frac12}(\S^2)$ into $H^{k+1}(B_1)$ for all integer $k \geq 0$ and  satisfies $\|\mathtt{PI} f\|_{L^\infty(B_1)} \leq \|f\|_{L^\infty(\S^2)} $.   
\end{lemma}

\begin{proof} 
See, e.g., \cite{Taylor.volume.1}, Proposition 1.7 in Section 5.1. 
The $L^\infty$ bound follows from the maximum principle. 
\end{proof}

\begin{lemma}[Solution map for the Poisson problem in divergence form] 
\label{lemma:op.S}
For any $g \in L^2(B_1)$ there exists a unique $u \in H^1_0(B_1)$ such that $\Delta u = \div g$
in $B_1$ (in the sense of distributions). 
The map $g \to u$ is a linear continuous operator 
\[
\mathtt S : L^2(B_1) \to H^1_0(B_1),
\]
which maps continuously $H^{k}(B_1)$ into $H^{k+1}(B_1) \cap H^1_0(B_1)$ 
for all integer $k \geq 0$. 
\end{lemma}

\begin{proof} 
See, e.g., \cite{Taylor.volume.1}, 
Propositions 1.1 and 1.2, and Theorem 1.3 in Section 5.1. 
\end{proof}

\begin{lemma}[Sobolev extension operator] 
\label{lemma:def.sobolev.ext.op} 
For every $k \in \mathbb{N}$ there exists a linear continuous extension operator $\mathtt{E} : H^k(B_1) \to H^k(\R^3)$ such that 
\[
\| \mathtt{E}  u\|_{H^l(\R^3)} \leq C_k \| u \|_{H^l(B_1)} \quad \text{for all }\, l \leq k \quad \text{and} \quad \|\mathtt{E} u\|_{L^\infty(\R^3)} \leq C_k \| u \|_{L^\infty(B_1)}.  
\]
\end{lemma}

\begin{proof} 
The proof can be found in Section 4.4 in \cite{Taylor.volume.1}.  
It follows from the existence of the Sobolev extension map in the half-space 
\cite{Taylor.volume.1}, Lemma 4.1 in Section 4.4,  
and a partition of unity argument (see, e.g., \cite[Proposition 2.1]{Julin.La.Manna}). 
The bound for the $L^\infty$-norm follows from the explicit construction of the extension map for the half-space. 
\end{proof}

Using the extension operator from Lemma \ref{lemma:def.sobolev.ext.op} 
and the classical Sobolev embedding for $\R^3$ 
(see e.g.{} \cite{Taylor.volume.1}, Proposition 1.3 in Section 4.1), 
we deduce the Sobolev embedding for the unit ball  
\begin{equation}
\label{eq:Sobolev-inequality}
\| f \|_{L^\infty(B_1)} \leq C \| f \|_{H^2(B_1)}.
\end{equation}

We also have the following classical product estimate. 

\begin{lemma}[Product estimate for the unit ball] 
\label{lemma:prod.emb.est}
For all $k \in \N_0$, all $f,g \in H^k(B_1) \cap L^\infty(B_1)$, 
the product $fg$ belongs to $H^k(B_1)$, with 
\[
\| f g \|_{H^k(B_1)} 
\leq C (\| f \|_{L^\infty(B_1)} \| g \|_{H^k(B_1)} + \| f \|_{H^k(B_1)} \| g \|_{L^\infty(B_1)}).
\]
\end{lemma}

\begin{proof} 
We give the proof for the reader's convenience. Extend $f$ and $g$ by the extension operator given by Lemma \ref{lemma:def.sobolev.ext.op} and use the product estimate in $\R^3$ (see e.g. \cite{Taylor.volume.3}, Proposition 3.7  in Section 13.3) to deduce 
\[
\begin{split}
\| f g \|_{H^k(B_1)} \leq \| (\mathtt{E} f) (\mathtt{E} g) \|_{H^k(\R^3)} 
& \leq  C (\|\mathtt{E}  f \|_{L^\infty(\R^3)} \| \mathtt{E} g \|_{H^k(\R^3)} + \| \mathtt{E} f \|_{H^k(\R^3)} \| \mathtt{E}g \|_{L^\infty(\R^3)})\\
& \leq C (\| f \|_{L^\infty(B_1)} \|g \|_{H^k(B_1)} + \|  f \|_{H^k(B_1)} \| g \|_{L^\infty(B_1)}).
\qedhere 
\end{split}
\]
\end{proof}
We also recall the Sobolev embedding on the sphere 
(see, e.g., \cite{Taylor.volume.1}, Proposition 3.3 in Section 4.3.), which implies 
\begin{equation}  \label{embedding.Sd}
\| f \|_{L^\infty(\S^2)} \leq C \| f \|_{H^{\frac32}(\S^2)}, 
\quad 
\| f \|_{W^{1,\infty}(\S^2)} \leq C \| f \|_{H^{\frac52}(\S^2)}.
\end{equation}

Now we study problem \eqref{punct.modif.Lap.06}. 
Let $\psi \in H^{\frac12}(\S^2)$, $h \in W^{1, \infty}(\S^2)$. 
Suppose that $u = \tilde \Phi$ is the unique solution in $H^1(B_1)$ of problem \eqref{punct.modif.Lap.06}. 
Let $v$ be the difference $v := u - \mathtt{PI} \psi$. 
Then $v \in H^1_0(B_1)$ and  
\[ 
0 = \div (P \grad (v + \mathtt{PI} \psi) ) 
= \Delta v + \div ((P-I) \grad (v + \mathtt{PI} \psi)) 
\quad \text{in } B_1, 
\]
because $\mathtt{PI} \psi$ is a harmonic function. 
Hence $\Delta v = \div g$ 
where $g = (I-P) \grad (v+ \mathtt{PI} \psi) \in L^2(B_1)$, 
whence $v = \mathtt{S} g$, that is, 
\begin{equation} \label{eq.for.v.03}
v - A v = A (\mathtt{PI} \psi), \quad 
A := \mathtt{S}[ (I-P) \grad \, \cdot \,].
\end{equation}
Vice versa, if a function $v \in H^1_0(B_1)$ satisfies identity \eqref{eq.for.v.03}, 
then the sum $u = v + \mathtt{PI} \psi$ is the unique solution of \eqref{punct.modif.Lap.06} 
belonging to $H^1(B_1)$. 

We want to prove the invertibility of the operator $I-A$ on the LHS of \eqref{eq.for.v.03},
and the analytic dependence of $A$ on $h$. 

\begin{lemma} 
\label{lemma:est.A}
Let $A$ be the linear operator defined in \eqref{eq.for.v.03}.
For every integer $k \geq 2$ there exists $\d > 0$ such that 
the map $h \to A$ from $\{ h \in H^{k+1}(\S^2) : \| h \|_{W^{1,\infty}(\S^2)} < \delta \}$ 
to $\mL(H^{k+1}(B_1))$ is analytic, and 
\[
\| Av \|_{H^{k+1}(B_1)} 
\leq C ( \| h \|_{H^{k+1}(\S^2)} \| v \|_{H^3(B_1)} 
+ \| h \|_{W^{1, \infty}(\S^2)} \| v \|_{H^{k+1}(B_1)} ).
\]
\end{lemma}

\begin{proof}
By Lemma \ref{lemma:op.S}, 
\begin{align*}
\| A v \|_{H^{k+1}(B_1)} \leq C \| (I-P) \grad v \|_{H^k(B_1)}. 
\end{align*}
By Lemma \ref{lemma:prod.emb.est}, 
\[
\| (I-P) \grad v \|_{H^k(B_1)} 
\leq C ( \| I-P \|_{H^k(B_1)} \| \grad v \|_{L^\infty(B_1)} 
+ \| I-P \|_{L^\infty(B_1)} \| \grad v \|_{H^k(B_1)} )
\]
and 
\[
\| \grad v \|_{L^\infty(B_1)} 
\leq C \| \grad v \|_{H^2(B_1)} 
\leq C \| v \|_{H^3(B_1)}, 
\quad 
\| \grad v \|_{H^k(B_1)} 
\leq C \| v \|_{H^{k+1}(B_1)}.
\]
The matrix $P$ appearing in \eqref{punct.modif.Lap.06} is defined in \eqref{def.P},
and it is written in \eqref{punct.P.0} in terms of $h$. 
By \eqref{est.rho.cut.off}, one has $0 \leq \rho(r) + r \rho'(r) \leq 1 + \frac12 8 = 5$ 
for all $r \in \R$, whence 
\begin{equation*} 
0 \leq \chi \leq 1, \quad \ 
0 \leq \chi_1 \leq 5 \quad \forall x \in \R^3,
\end{equation*}
where $\chi$ is defined in \eqref{def.chi.rho.cut.off}
and $\chi_1$ in \eqref{def.chi.1}.
Thus, for $\| h \|_{L^\infty(\S^2)} < 1/5$, 
$P$ is given by the series $P = \sum_{n=0}^\infty P_n$, 
where $P_0 := I$,
\begin{align} 
\label{def.P.n.1}
P_1 & := \chi_1 h_0 I - \grad (\chi h_0) \otimes x - x \otimes \grad (\chi h_0), 
\\
P_n & := (- \chi_1 h_0)^{n-2} |\grad (\chi h_0)|^2 x \otimes x,  \quad n \geq 2, 
\label{def.P.n}
\end{align}
and $h_0 := \mE_0 h$ is the 0-homogeneous extension of $h$. 
Using bound \eqref{est.rho.cut.off} 
and the orthogonality property $\la \grad \chi , \grad h_0 \ra = 0$, 
one has 
\[
| (\chi_1 h_0)(x) | \leq 5 \| h \|_{L^\infty(\S^2)}, 
\quad 
| \grad(\chi h_0)(x) | |x| \leq 2 \| h \|_{W^{1,\infty}(\S^2)} 
\quad \forall x \in \R^3.
\]
Hence 
\begin{align*}
\| P_n \|_{L^\infty(\R^3)} 
& \leq (9 \| h \|_{W^{1,\infty}(\S^2)} )^n 
\quad \forall n \geq 1.
\end{align*}
As a consequence, for any fixed $\delta_0 \in (0,1/9)$, 
the series $\sum P_n$ is totally convergent in the $L^\infty(B_1)$ norm, 
uniformly for $h$ in the ball $\| h \|_{W^{1,\infty}(\S^2)} \leq \delta_0$, 
with
\begin{equation*} 
\| I-P \|_{L^\infty(B_1)} 
\leq \sum_{n=1}^\infty \| P_n \|_{L^\infty(B_1)} 
\leq C \| h \|_{W^{1,\infty}(\S^2)}.
\end{equation*}
By using a partition of unity and local coordinate systems flattening the boundary $\S^2$ 
of the ball $B_1$, one proves that 
\[
\| \chi h_0 \|_{H^k(B_1)} + \| \chi_1 h_0 \|_{H^k(B_1)} 
\leq C \| h \|_{H^{k}(\S^2)}.
\]
Hence 
\[
\| P_n \|_{H^k(B_1)} \leq (C \| h \|_{W^{1,\infty}(\S^2)})^{n-1} \| h \|_{H^{k+1}(\S^2)}, 
\quad n \geq 1,
\]
and 
\begin{equation*} 
\| I-P \|_{H^k(B_1)} 
\leq \sum_{n=1}^\infty \| P_n \|_{H^k(B_1)} 
\leq C \| h \|_{H^{k+1}(\S^2)}
\end{equation*}
for $C \| h \|_{W^{1,\infty}(\S^2)} < 1$. 
The thesis follows from the estimates above. 
\end{proof}

\begin{lemma} 
\label{lemma:inv.I-A}
For every integer $k \geq 2$ there exists $\d > 0$ such that, 
for all $h \in H^{k+1}(\S^2)$ with $\| h \|_{H^3(\S^2)} < \delta$,  
the map $I-A$ is an invertible linear operator of $H^{k+1}(B_1)$ onto itself, 
and 
\[
\| (I-A) v \|_{H^{k+1}(B_1)} + \| (I-A)^{-1} v \|_{H^{k+1}(B_1)} 
\leq C ( \| v \|_{H^{k+1}(B_1)} + \| h \|_{H^{k+1}(\S^2)} \| v \|_{H^3(B_1)} ). 
\]
The map $h \to (I-A)^{-1}$ from $\{ h \in H^{k+1}(\S^2) : \| h \|_{H^3(\S^2)} < \delta \}$ 
to $\mL( H^{k+1}(B_1) )$ is analytic.
\end{lemma}

\begin{proof}
Using Lemma \ref{lemma:est.A}, one proves, by induction on $n$, 
that there exists $C$, depending on $k$ and independent of $n$, 
such that 
\begin{align*}
\| A^n v \|_{H^3(B_1)} 
& \leq ( C \| h \|_{H^3(\S^2)} )^n \| v \|_{H^3(B_1)}, \quad 
\\
\| A^n v \|_{H^{k+1}(B_1)} 
& \leq C^n ( \| h \|_{H^3(\S^2)}^n \| v \|_{H^{k+1}(B_1)} 
+ n \| h \|_{H^3(\S^2)}^{n-1} \| h \|_{H^{k+1}(\S^2)} \| v \|_{H^3(B_1)} )
\end{align*}
for all $n \in \N$. The thesis follows by Neumann series. 
\end{proof}

From the analyticity and invertibility of $I-A$ 
we deduce the following result for the solution of equation \eqref{eq.for.v.03}.

\begin{lemma}
\label{lemma:sol.v}
Fix $k \geq 2$, and let $\d > 0$ be as in Lemma \ref{lemma:inv.I-A}. 
Let $h \in H^{k+1}(\S^2)$, $\| h \|_{H^3(\S^2)} < \delta$, $\psi \in H^{k+\frac12}(\S^2)$. 
Then equation \eqref{eq.for.v.03} has a unique solution 
$v = (I-A)^{-1} A (\mathtt{PI} \psi) \in H^{k+1}(B_1)$, with 
\[
\| v \|_{H^{k+1}(B_1)} 
\leq C ( \| h \|_{H^3(\S^2)} \| \psi \|_{H^{k+\frac12}(\S^2)} 
+ \| h \|_{H^{k+1}(\S^2)} \| \psi \|_{H^{\frac52}(\S^2)} ).
\]
The map $h \to v$ from $\{ h \in H^{k+1}(\S^2) : \| h \|_{H^3(\S^2)} < \delta \}$ 
to $\mL ( H^{k+\frac12}(\S^2) , H^{k+1}(B_1) )$ is analytic.
\end{lemma}

\begin{proof}
Apply Lemmas \ref{lemma:def.harmonic.ext.op}, \ref{lemma:est.A}, \ref{lemma:inv.I-A}. 
\end{proof}

We recall that  $v := u - \mathtt{PI} \psi$, 
and therefore we obtain the following lemma for $u$.

\begin{lemma}
\label{lemma:sol.u}
Let $k, \delta, h, \psi$ be like in Lemma \ref{lemma:sol.v}.
The solution $u$ of \eqref{punct.modif.Lap.06} satisfies 
\begin{align*}
\| u \|_{H^{k+1}(B_1)} 
& \leq C ( \| \psi \|_{H^{k+\frac12}(\S^2)} + \| h \|_{H^{k+1}(\S^2)} \| \psi \|_{H^{\frac52}(\S^2)} ),
\\
\| \la \grad u , x \ra \|_{H^{k-\frac12}(\S^2)} 
& \leq C \| u \|_{H^{k+1}(B_1)}. 
\end{align*}
The map $h \to u$ from $\{ h \in H^{k+1}(\S^2) : \| h \|_{H^3(\S^2)} < \delta \}$ 
to $\mL ( H^{k+\frac12}(\S^2) , H^{k+1}(B_1) )$ is analytic. 
\end{lemma}

\begin{proof}
The analyticity and the first inequality follow from $u  = v + \mathtt{PI} \psi$ 
and Lemmas \ref{lemma:def.harmonic.ext.op} and \ref{lemma:sol.v}. 
The second inequality follows from Lemma \ref{lemma:trace.op}, Lemma \ref{lemma:prod.emb.est} 
and \eqref{eq:Sobolev-inequality}.
\end{proof}

Let us focus on the Dirichlet-Neumann operator defined in \eqref{formula.G.P}. 
The previous lemma is crucial, as it enables us to estimate the most difficult term in \eqref{formula.G.P}, 
which is $\la \nabla \tilde \Phi, x\ra$. In order to estimate the other terms, 
we first state the product estimate on the sphere and an estimate on the tangential gradient.  
The following lemma holds for all real $s>0$, but for simplicity we state it only for half-integers. 

\begin{lemma} \label{lemma:est.tools.S.n-1} 
For all $k \in \N_0$, all $f,g \in H^{k+\frac12}(\S^2) \cap L^\infty(\S^2)$, 
the product $fg$ belongs to $H^{k+\frac12}(\S^2) $, with  
\[
\begin{split}
\| fg \|_{H^{k+\frac12}(\S^2)} 
&\leq C ( \| f \|_{L^{\infty}(\S^{2})} \| g \|_{H^{k+\frac12}(\S^2)} +\| f \|_{H^{k+\frac12}(\S^2)} \| g \|_{L^{\infty}(\S^{2})}   )\\
&\leq  C ( \| f \|_{H^{\frac32}(\S^2))} \| g \|_{H^{k+\frac12}(\S^2)} +\| f \|_{H^{k+\frac12}(\S^2)} \| g \|_{H^{\frac32}(\S^{2})}   ). 
\end{split}
\]
If $f \in H^{k+\frac12}(\S^2)$ and $k \geq 1$ then
\[
\| \grad_{\S^2} f \|_{H^{k-\frac12}(\S^2)} 
\leq C \| f \|_{H^{k+\frac12}(\S^2)}.  
\]
\end{lemma}

\begin{proof}
For the first inequality we extend $f$ and $g$ with the harmonic extension 
from Lemma \ref{lemma:def.harmonic.ext.op} and have, 
by the trace estimate from Lemma \ref{lemma:trace.op} 
and the product estimate from Lemma \ref{lemma:prod.emb.est}, that 
\[
\begin{split}
\| fg \|_{H^{k+\frac12}(\S^2)}  &\leq C \| (\mathtt{PI} f)(\mathtt{PI} g) \|_{H^{k+1}(B_1)}\\
&\leq C ( \| \mathtt{PI} f \|_{L^{\infty}(B_1)} \|  \mathtt{PI} g \|_{H^{k+1}(B_1)} +\|  \mathtt{PI} f \|_{H^{k+1}(B_1)} \|  \mathtt{PI} g\|_{L^{\infty}(B_1)}  ). 
\end{split}
\]
The first inequality then follows from Lemma \ref{lemma:def.harmonic.ext.op} 
and the Sobolev embedding \eqref{embedding.Sd}.

For the second inequality we again extend $f$ with the harmonic extension $\mathtt{PI} f$, 
and we recall that $\grad_{\S^2} f$ is the trace on $\S^2$ of the difference 
$\nabla (\mathtt{PI} f) - \la \nabla (\mathtt{PI} f), x \ra x$. 
By the trace estimate in Lemma \ref{lemma:trace.op}, 
the triangular inequality, 
the product estimate in Lemma \ref{lemma:prod.emb.est}, 
and Lemma \ref{lemma:def.harmonic.ext.op}, 
one has 
\[
\begin{split}
\| \grad_{\S^2} f \|_{H^{k+\frac12}(\S^2)} 
& = \| \nabla (\mathtt{PI} f) - \la \nabla (\mathtt{PI} f), x \ra x \|_{H^{k+\frac12}(\S^2)} 
\\
& \leq C \| \nabla (\mathtt{PI} f) - \la \nabla (\mathtt{PI} f), x \ra x \|_{H^{k+1}(B_1)} 
\\
& \leq C ( \| \nabla (\mathtt{PI} f) \|_{H^{k+1}(B_1)} 
+ \| \la \nabla (\mathtt{PI} f), x \ra x \|_{H^{k+1}(B_1)} )
\\
& \leq C \| \nabla (\mathtt{PI} f) \|_{H^{k+1}(B_1)} 
\\
& \leq C \| \mathtt{PI} f \|_{H^{k+2}(B_1)} 
\\
& \leq C \| f \|_{H^{k+\frac32}(\S^2)}. 
\qedhere
\end{split}
\]
\end{proof}

\begin{lemma} \label{lemma:other.terms}
Let $k \geq 1$. There exists $\delta > 0$ such that, for $\| h \|_{H^{\frac52}(\S^2)} \leq \delta$, 
one has  
\begin{align*} 
\| (1+h)^\a \|_{H^{k+\frac12}(\S^2)}  
& \leq C (1 + \| h \|_{H^{k+\frac12}(\S^2)}), 
\quad \a \in \{ -1, -2 \}, 
\\
\| \{ (1+h)^2 + | \grad_{\S^2} h |^2 \}^{\pm \frac12} \|_{H^{k+\frac12}(\S^2)}  
& \leq C (1 + \| h \|_{H^{k + \frac32}(\S^2)}) .
\end{align*}
\end{lemma}

\begin{proof}
Using the product estimate from Lemma \ref{lemma:est.tools.S.n-1} inductively, 
we have for every $m \in \N$ that 
\[
\|  h^m \|_{H^{k+\frac12}(\S^2)}  \leq \big( 2C \| h \|_{H^{\frac32}(\S^2)}\big)^{m-1} \| h \|_{H^{k+\frac12}(\S^2)},
\]
where $C\geq 2$.  To estimate $(1+h)^\a$, we write it as a power series around $h=0$ 
and apply the above  estimate  to each term of the series. 
We estimate $(1+g)^p$, $p = \pm \frac12$, in the same way, 
with $g = 2h + h^2 + |\grad_{\S^2} h|^2$. Finally  
 we estimate $g$ by Lemma \ref{lemma:est.tools.S.n-1}.
\end{proof}

We may now prove the main result of this section.

\begin{theorem}[Tame estimate and analyticity of the Dirichlet-Neumann operator]
\label{thm:tame.est.DN.op}
Let $k \geq 2$ be an integer. There exists $\delta > 0$ such that, 
for $h \in H^{k+1}(\S^2)$, $\psi \in H^{k+\frac12}(\S^2)$, $\| h \|_{H^{3}(\S^2)} \leq \delta$, 
the Dirichlet-Neumann operator $G(h)\psi$ satisfies 
\[
\| G(h)\psi \|_{H^{k-\frac12}(\S^2)} 
\leq C ( \| \psi \|_{H^{k+\frac12}(\S^2)} + \| h \|_{H^{k+1}(\S^2)} \| \psi \|_{H^{\frac52}(\S^2)} ).  
\]
The map $h \to G(h)$ is analytic from $\{ h \in H^{k+1}(\S^2) : \| h \|_{ H^{3}(\S^2) } < \delta \}$ 
to $\mL (H^{k + \frac12}(\S^2) , H^{k-\frac12}(\S^2) )$.
\end{theorem}

\begin{proof}
Use formula \eqref{formula.G.P} 
and Lemmas \ref{lemma:sol.u}, \ref{lemma:est.tools.S.n-1}, \ref{lemma:other.terms}.
\end{proof}

\begin{remark} \label{rem:weaker.because}
The tame estimate in Theorem \ref{thm:tame.est.DN.op} can be improved in several respects: 
$(i)$ the $H^{k+1}(\S^2)$ norm of $h$ on the RHS of the inequality 
can be replaced by its $H^{k+\frac12}(\S^2)$ norm;
$(ii)$ the radius $\delta$ can be proved to be independent of the high regularity $k$;
$(iii)$ the regularity parameter $k$ can be real, not only integer; 
$(iv)$ the constant in front of the $H^{k+\frac12}(\S^2)$ norm of $\psi$ 
can be taken independent of $k$;
$(v)$ the regularity threshold $k=2$ can be made lower.
These technical improvements require a longer proof, which we defer to a forthcoming paper. 
\end{remark}

\section{Traveling waves}

In this section we construct traveling waves. 
Let $h, \psi$ be functions of the form 
\begin{equation} \label{ansatz.h.psi}
h(t,x) = \eta(R(\om t)x), \quad 
\psi(t,x) = \beta(R(\om t)x), \quad 
t \in \R, \ \  
x \in \S^2,
\end{equation}
where $\eta, \beta : \S^2 \to \R$ are scalar functions defined on $\S^2$, 
independent of time, 
$\om \in \R$ is the angular velocity parameter, 
and $R(\th)$ is the rotation matrix 
\begin{equation} \label{def.R(t)} 
R(\th) = \begin{pmatrix} 
\cos \th & - \sin \th & 0 \\
\sin \th & \cos \th & 0 \\
0 & 0 & 1 
\end{pmatrix}.
\end{equation}
The transformation law for the time derivative $\pa_t h, \pa_t \psi$ is the following one.

\begin{lemma}
\label{lemma:pat.transf.law}
The matrix $R(\th)$ in \eqref{def.R(t)} satisfies 
\begin{equation}  \label{def.mJ}
\pa_\th R(\th) R^T(\th) 
= \begin{pmatrix} 
0 	& - 1 	& 0 \\
1 	&  0 		& 0 \\
0 	&  0 		& 0 
\end{pmatrix}
=: \mJ
\end{equation}
for all $\th \in \R$. 
The function $h(t,x)$ defined in \eqref{ansatz.h.psi} satisfies 
\begin{equation}  \label{pat.transf.law}
\pa_t h(t,x) 
= \om \la \mJ y, \grad_{\S^2} \eta(y) \ra 
\end{equation}
where $y = R(\om t) x$, 
for all $x \in \S^2$, 
all $t, \om \in \R$.  
\end{lemma}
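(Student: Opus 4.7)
The plan is to verify both assertions by direct calculation, using the $0$-homogeneous extension of $\eta$ from Section \ref{sec:preliminary} to translate the chain-rule derivative of $h(t,x)=\eta(R(\omega t)x)$ into a statement about $\nabla_{\S^2}\eta$.

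First I would establish \eqref{def.mJ} by a plain matrix multiplication: differentiating \eqref{def.R(t)} entrywise gives
\[
\pa_\th R(\th) = \begin{pmatrix} -\sin\th & -\cos\th & 0 \\ \cos\th & -\sin\th & 0 \\ 0 & 0 & 0 \end{pmatrix},
\]
and multiplying on the right by $R^T(\th)$ collapses the angular dependence via $\cos^2\th+\sin^2\th=1$, leaving precisely $\mJ$. Equivalently, one can observe that $R(\th+\th')=R(\th)R(\th')$ implies $\pa_\th R(\th)=\pa_\th R(0) R(\th)$, whence $\pa_\th R(\th) R^T(\th)=\pa_\th R(0)=\mJ$; this viewpoint also makes the $\th$-independence of the product manifest.

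Next I would prove \eqref{pat.transf.law} by the chain rule. Set $y(t):=R(\omega t)x$ and extend $\eta$ to $\R^3\setminus\{0\}$ as the $0$-homogeneous $\mE_0\eta$ of \eqref{def.mE.0.mE.1.sigma}, so that $h(t,x)=(\mE_0\eta)(y(t))$. Differentiating and using $\pa_t y(t)=\omega\,\pa_\th R(\omega t)\,x=\omega\,\pa_\th R(\omega t)R^T(\omega t)\,y(t)=\omega\mJ y(t)$ from the first part of the lemma, one obtains
\[
\pa_t h(t,x) = \omega\,\la \nabla(\mE_0\eta)(y),\mJ y\ra.
\]
Since $y\in\S^2$, the identity $\nabla(\mE_0\eta)(y)=\nabla_{\S^2}\eta(y)$ in \eqref{plants.01} converts this into $\omega\,\la\mJ y,\nabla_{\S^2}\eta(y)\ra$, which is \eqref{pat.transf.law}.

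There is no genuine obstacle here: the only mild subtlety is the switch from the ambient gradient to the tangential one, which is resolved by invoking the already-established \eqref{plants.01}. It is worth noting, for consistency with the tangential framework of Section \ref{sec:preliminary}, that $\mJ y$ automatically lies in $T_y(\S^2)$ because $\mJ$ is antisymmetric and hence $\la y,\mJ y\ra=0$, so the pairing $\la \mJ y,\nabla_{\S^2}\eta(y)\ra$ is intrinsically a pairing of two tangent vectors at $y\in\S^2$. This observation is what makes the formula \eqref{pat.transf.law} well posed and will be the natural starting point for the traveling wave ansatz in the sequel.
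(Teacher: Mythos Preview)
Your proof is correct and is precisely the ``straightforward calculation'' the paper alludes to: direct differentiation of $R(\th)$ for \eqref{def.mJ}, followed by the chain rule and the identification $\nabla(\mE_0\eta)=\nabla_{\S^2}\eta$ on $\S^2$ from \eqref{plants.01} for \eqref{pat.transf.law}. There is nothing to add.
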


\begin{proof}
The proof is a straightforward calculation.
\end{proof}

For the other terms of the equations, 
the time variable plays the role of a parameter. 
We have the following transformation laws. 

\begin{lemma}
\label{lemma:pax.transf.law}
Let $M \in \mathrm{Mat}_{3 \times 3}(\R)$ be an orthogonal matrix, 
i.e., $M M^T = M^T M = I$. 
Let $h(x) = \eta(Mx)$ and $\psi(x) = \beta(Mx)$ 
for all $x \in \S^2$, 
where $h, \psi, \eta, \beta : \S^2 \to \R$.
Then 
\begin{align}  
(\mE_0 h)(x) 
& = (\mE_0 \eta)(Mx) \quad \forall x \in \R^3 \setminus \{ 0 \},
\label{tool.M.01} 
\\
\grad_{\S^2} h(x) 
& = M^T (\grad_{\S^2} \eta)(Mx) 
\quad \forall x \in \S^2,
\label{tool.M.02} 
\\
|\grad_{\S^2} h(x)| 
& = |(\grad_{\S^2} \eta)(Mx)|
\quad \forall x \in \S^2,
\label{tool.M.03} 
\\
\Delta_{\S^2} h(x) 
& = (\Delta_{\S^2} \eta)(Mx)
\quad \forall x \in \S^2,
\label{tool.M.04} 
\\
\la (D^2_{\S^2} h)(x) \grad_{\S^2} h(x) , \grad_{\S^2} h(x) \ra 
& = \la (D^2_{\S^2} \eta)(Mx) (\grad_{\S^2} \eta)(Mx) , (\grad_{\S^2} \eta)(Mx) \ra 
\quad \forall x \in \S^2,
\label{tool.M.05} 
\\
H(h)(x) 
& = H(\eta)(Mx)
\quad \forall x \in \S^2,
\label{tool.M.06} 
\\
\la \grad_{\S^2} h(x), \grad_{\S^2} \psi(x) \ra 
& = \la (\grad_{\S^2} \eta)(Mx), (\grad_{\S^2} \beta)(Mx) \ra 
\quad \forall x \in \S^2,
\label{tool.M.07} 
\\
P_h(x) 
& = M^T P_\eta(Mx) M
\quad \forall x \in \R^3 \setminus \{ 0 \},
\label{tool.M.08} 
\\
(G(h)\psi)(x)
& = (G(\eta)\beta)(Mx)
\quad \forall x \in \S^2,
\label{tool.M.09} 
\end{align}
where $\mE_0 h$ is the $0$-homogeneous extension of $h$, 
and $P_h(x) = (1 + h_0(x)) I - \grad h_0(x) \otimes x - x \otimes \grad h_0(x) 
+ (1 + h_0(x))^{-1} |\grad h_0(x)|^2 x \otimes x$, with $h_0 = \mE_0 h$, 
is the matrix in \eqref{def.P} and \eqref{punct.P.0}. 
\end{lemma}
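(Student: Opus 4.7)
My plan is to reduce every identity to the Euclidean chain rule applied to $0$-homogeneous extensions, combined with the two facts that $M$ is an isometry of $\R^3$ and that $M$ is a bijection of $\S^2$ onto itself. The cornerstone is \eqref{tool.M.01}: since $|Mx| = |x|$, the map $\sigma(x) = x/|x|$ in \eqref{def.mE.0.mE.1.sigma} satisfies $\sigma(Mx) = M\sigma(x)$, and therefore $(\mE_0 h)(x) = h(\sigma(x)) = \eta(M\sigma(x)) = (\mE_0 \eta)(Mx)$ for all $x \neq 0$. This identity is the only substantive geometric input; everything else is extracted from it by differentiation and contraction.

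Differentiating \eqref{tool.M.01} once and twice in the Euclidean sense gives $\grad(\mE_0 h)(x) = M^T \grad(\mE_0 \eta)(Mx)$ and $D^2(\mE_0 h)(x) = M^T D^2(\mE_0 \eta)(Mx) M$ in $\R^3 \setminus \{0\}$. For $x \in \S^2$, these become the tangential gradient and Hessian via \eqref{plants.01}, which yields \eqref{tool.M.02} immediately; the isometry $|M^T v| = |v|$ then gives \eqref{tool.M.03}, and, paired with the analogous identity for $\psi$, also \eqref{tool.M.07}. Identity \eqref{tool.M.04} follows from $\Delta_{\S^2} = \Delta \circ \mE_0$ on $\S^2$ and the invariance of the Euclidean Laplacian under orthogonal change of variables. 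For \eqref{tool.M.05}, the rank-one correction $\grad(\mE_0 h) \otimes x$ in the formula $D^2_{\S^2} h = D^2(\mE_0 h) + \grad(\mE_0 h) \otimes x$ of \eqref{plants.01} contributes zero when paired twice with $\grad_{\S^2} h$, because of the orthogonality \eqref{S2.gradient.is.orthogonal.to.x}, and the surviving Euclidean Hessian transforms as above. Since the parametrization of $H(h)$ in Lemma \ref{lem:meancurvature} is built only out of the scalar quantities $h$, $\Delta_{\S^2} h$, $|\grad_{\S^2} h|^2$ and $\la D^2_{\S^2} h \grad_{\S^2} h, \grad_{\S^2} h \ra$, and of $J(h)$, assembling \eqref{tool.M.01}--\eqref{tool.M.05} gives \eqref{tool.M.06}. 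For \eqref{tool.M.08}, I substitute the identity $\grad(\mE_0 h)(x) = M^T \grad(\mE_0 \eta)(Mx)$ and $x = M^T(Mx)$ into the explicit formula \eqref{punct.P.0} and apply the elementary algebraic identity $(M^T a) \otimes (M^T b) = M^T(a \otimes b) M$, together with $I = M^T I M$; each summand of $P_h(x)$ becomes the corresponding summand of $P_\eta(Mx)$ conjugated by $M$.

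For \eqref{tool.M.09} I set $\Phi_h(y) := \Phi_\eta(My)$ for $y \in \Om_h$, where $\Phi_\eta$ is the harmonic extension of $\beta$ in \eqref{def:Dirichlet-Neumann2}. One checks $M\gamma_h(x) = (1 + \eta(Mx))Mx = \gamma_\eta(Mx)$, so $M(\pa \Om_h) = \pa \Om_\eta$ and $M \Om_h = \Om_\eta$; then $\Phi_h$ is harmonic in $\Om_h$ by $M$-invariance of $\Delta$ and satisfies $\Phi_h(\gamma_h(x)) = \beta(Mx) = \psi(x)$, so it is the unique Dirichlet harmonic extension of $\psi$. The outward unit normals transform as $M\nu_{\Om_h}(y) = \nu_{\Om_\eta}(My)$, because an isometry sending $\Om_h$ to $\Om_\eta$ maps outward unit normals to outward unit normals; alternatively, this is read directly from formula \eqref{nu.h} combined with \eqref{tool.M.01}--\eqref{tool.M.03}. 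Consequently $\la \grad \Phi_h(y), \nu_{\Om_h}(y) \ra = \la M^T\grad \Phi_\eta(My), M^T \nu_{\Om_\eta}(My) \ra = \la \grad \Phi_\eta(My), \nu_{\Om_\eta}(My) \ra$, and evaluation at $y = \gamma_h(x)$ gives \eqref{tool.M.09}. The proof contains no real analytic difficulty: the only delicate part is bookkeeping, namely ensuring that $M$ and $M^T$ land on the correct side of each tensor product in \eqref{tool.M.08} and carefully separating the Euclidean and tangential Hessian contributions in \eqref{tool.M.05}.
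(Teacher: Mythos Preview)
Your proof is correct and follows essentially the same approach as the paper, which simply states that all the properties follow from $MM^T = M^TM = I$ and, for \eqref{tool.M.09}, from the uniqueness of the solution of the elliptic problem. Your write-up fills in the details the paper omits; the only cosmetic difference is that for \eqref{tool.M.09} you invoke uniqueness of the harmonic extension in $\Omega$ directly, whereas the paper points to the equivalent transformed problem \eqref{punct.modif.Lap.06} on $B_1$.
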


\begin{proof} 
All the properties can be easily proved using the identities $M M^T = M^T M = I$, 
which imply that the map $x \mapsto Mx$ is an isometry; 
to prove \eqref{tool.M.09}, use also the uniqueness of the solution 
of the elliptic problem \eqref{punct.modif.Lap.06}.
\end{proof}

By Lemma \ref{lemma:pat.transf.law}
and Lemma \ref{lemma:pax.transf.law}, 
system \eqref{kin.eq.08}, \eqref{dyn.eq.08} 
for the unknowns $h, \psi$ 
satisfying ansatz \eqref{ansatz.h.psi}
becomes the equation
\begin{equation} \label{eq.mF}
\mF(\om,u) = 0
\end{equation}
for the unknown $u = (\eta, \beta)$ on $\S^2$, 
where 
\begin{align} 
\mF(\om, u) & := (\mF_1(\om, u) , \mF_2(\om, u)),
\label{def.mF}
\\
\mF_1(\om,u) & := 
\om \pois \eta 
- \frac{\sqrt{(1+\eta)^2 + |\grad_{\S^2} \eta|^2}}{1 + \eta} \, G(\eta) \beta,
\label{def.mF.1} 
\\
\mF_2(\om,u) & := 
\om \pois \beta 
- \frac12 \Big( G(\eta) \beta
+ \frac{\la \grad_{\S^2} \beta , \grad_{\S^2} \eta \ra}{(1+\eta) 
\sqrt{(1+\eta)^2 + |\grad_{\S^2} \eta|^2}} \Big)^2
+ \frac{|\grad_{\S^2} \beta|^2}{2(1+\eta)^2} 
+ \s_0 \big(H(\eta) - 2\big), 
\label{def.mF.2} 
\end{align} 
and $\pois$ is the linear operator 
\begin{equation}  \label{def.mL}
\pois f(x) := \la \mJ x , \grad_{\S^2} f(x) \ra, \quad x \in \S^2,
\end{equation}
for any $f : \S^2 \to \R$.
Note that, since $\mJ x$ belongs to the tangent space $T_x(\S^2)$, 
one also has 
\begin{equation} \label{Poisson.brackets}
\pois f(x) = \la \mJ x, \grad \tilde f(x) \ra
= (x_1 \pa_{x_2} - x_2 \pa_{x_1}) \tilde f(x) 
\quad \forall x \in \S^2,
\end{equation}
for any extension $\tilde f$ of $f$ to an open neighborhood of $\S^2$. 
We also observe that $\mF(\om,0) = 0$ for all $\om \in \R$.

\begin{lemma} 
\label{lemma:mF.Hs}
Let $k \geq 2$ and let 
\[
U := \{ u = (\eta, \beta) : 
\eta \in H^{k+1}(\S^2,\R), \ \ 
\beta \in H^{k+\frac12}(\S^2,\R), \ \ 
\| \eta \|_{H^{3}(\S^2)} < \delta \},
\]
where $\delta$ is the constant in Theorem \ref{thm:tame.est.DN.op}.
Then $\mF_1(\om,u) \in H^{k-\frac12}(\S^2,\R)$, 
$\mF_2(\om,u) \in H^{k-1}(\S^2,\R)$ 
for all $u \in U$, $\om \in \R$, 
and the map 
\[
\mF : \R \times U \to H^{k -\frac12}(\S^2,\R) \times H^{k-1}(\S^2,\R)
\]
is analytic.
\end{lemma}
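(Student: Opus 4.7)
The plan is to handle the two components $\mF_1$ and $\mF_2$ separately, decomposing each into a sum/product of four basic building blocks: (a) linear differential operators on $\S^2$ (namely $\pois$, $\grad_{\S^2}$, $D^2_{\S^2}$, $\Delta_{\S^2}$), which lose a controlled number of derivatives; (b) the Dirichlet--Neumann operator $G(\eta)\beta$, analytic in $\eta$ and linear in $\beta$ by Theorem \ref{thm:tame.est.DN.op}; (c) Nemytskii-type operators of the form $\eta \mapsto f(\eta, \grad_{\S^2}\eta)$ with $f$ real-analytic in its arguments (the only such $f$ appearing being $t \mapsto 1/(1+t)^k$ and $(s,v)\mapsto \sqrt{(1+s)^2+|v|^2}^{\pm 1}$); and (d) pointwise products of Sobolev functions. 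The parameter $\omega$ enters only through the two terms $\omega\pois\eta$ and $\omega\pois\beta$, where dependence is affine, so analyticity in $\omega$ is immediate and I will not discuss it further.

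For the mapping property, I would first verify the target spaces by reading off the worst-case regularity of each summand. In $\mF_1$, the linear term $\pois\eta$ lands in $H^{s+\frac12}(\S^2) \hookrightarrow H^s(\S^2)$; by Theorem \ref{thm:tame.est.DN.op}, $G(\eta)\beta \in H^s(\S^2)$; and the prefactor $J/(1+\eta)$ lies in $H^{s+\frac12}(\S^2)$ by Lemma \ref{lemma:other.terms}. Multiplying in $H^s$ via \eqref{prod.est.unified.Hr.S2} (valid since $s \geq s_0 > 1 = \dim(\S^2)/2$) keeps us in $H^s$. In $\mF_2$, $\pois\beta \in H^s \hookrightarrow H^{s-\frac12}$, while $G(\eta)\beta$ squared, the cross term, and $|\grad_{\S^2}\beta|^2/(1+\eta)^2$ are all in $H^s \hookrightarrow H^{s-\frac12}$ by the same algebra property. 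The decisive term is the mean curvature $\sigma_0 H(\eta)$: by Lemma \ref{lem:meancurvature} the leading piece is $-\Delta_{\S^2}\eta/[(1+\eta)J]$, which for $\eta \in H^{s+\frac32}$ belongs exactly to $H^{s-\frac12}$, and this is precisely why the second component has regularity $s-\frac12$ rather than $s$.

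For analyticity, I would argue by composition. Theorem \ref{thm:tame.est.DN.op} already provides analyticity of $\eta \mapsto G(\eta)\beta$ as a map from the ball in $H^{s_0+\frac32}(\S^2)\cap H^{s+\frac32}(\S^2)$ into $H^s(\S^2)$, linear in $\beta$. The Nemytskii operators of type (c) are handled by expanding in convergent power series: $1/(1+\eta)^k = \sum_{n\geq 0} \binom{-k}{n}\eta^n$ converges in $H^{s+\frac32}(\S^2)$ for $\|\eta\|_{H^{s_0+\frac32}}$ small (using \eqref{power.est.Hr.S2} and the embedding \eqref{embedding.Sd}), and $\sqrt{(1+\eta)^2+|\grad_{\S^2}\eta|^2}=(1+\eta)(1+g)^{1/2}$ with $g=|\grad_{\S^2}\eta|^2/(1+\eta)^2 \in H^{s+\frac12}(\S^2)$ small, so $(1+g)^{1/2}$ is again given by a convergent binomial series in a Sobolev algebra. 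Each such series is the analytic map one obtains, and the bounded multilinear maps coincide with the obvious ones. Products of two analytic maps with values in a Sobolev algebra are analytic (Cauchy product), so point (d) is automatic. Composing these four types gives analyticity of each summand of $\mF_1,\mF_2$.

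The main technical obstacle is simply bookkeeping: one must track simultaneously the \emph{low} norm $H^{s_0+\frac32}$ (governing smallness and the radius of analyticity via the tame structure of Theorem \ref{thm:tame.est.DN.op}) and the \emph{high} norm $H^{s+\frac32}$ (governing the target regularity) through all the products, and verify that the worst-derivative chain $\Delta_{\S^2}\eta \cdot (\text{rational in } \eta,\grad_{\S^2}\eta)$ inside $H(\eta)$ respects the product estimates \eqref{prod.est.unified.Hr.S2}, \eqref{power.est.Hr.S2}, \eqref{est.grad.Hr.S2}. This is genuine but routine once one decides that the Sobolev-algebra threshold $s_0>1$ and the radius $\delta_0$ from Theorem \ref{thm:tame.est.DN.op} are the two controlling parameters; no new analytic ingredient beyond what is established in Section \ref{sec:analytic.tame.est.DN.op} is required.
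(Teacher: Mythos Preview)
Your proposal is correct and is precisely a detailed version of the paper's own proof, which consists of the single sentence ``It is a straightforward consequence of the properties and estimates proved in Section~\ref{sec:analytic.tame.est.DN.op}.'' You have accurately identified the relevant building blocks (Theorem~\ref{thm:tame.est.DN.op} for $G(\eta)\beta$, Lemma~\ref{lemma:other.terms} for the Nemytskii pieces, Lemma~\ref{lem:meancurvature} for the curvature term, and the product/power/gradient estimates \eqref{prod.est.unified.Hr.S2}, \eqref{power.est.Hr.S2}, \eqref{est.grad.Hr.S2}) and the reason the second component lands in $H^{s-\frac12}$ rather than $H^s$.
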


\begin{proof}
It is a straightforward consequence of the properties 
and estimates proved in Section \ref{sec:analytic.DN.op} and of Lemma \ref{lem:meancurvature}.
\end{proof}

\subsection{The linearized operator at zero}

We calculate the linearized operator 
$L := \pa_u \mF(\om,0)$ at $u=0$, which is the linear operator
\[
L : H^{k+1}(\S^2) \times H^{k+\frac12}(\S^2) 
\to H^{k - \frac12}(\S^2) \times H^{k-1}(\S^2),
\]
\begin{equation} \label{def.L}
L (\eta, \beta) = \begin{pmatrix} 
\om \pois \eta 
- G(0) \beta 
\\
\om \pois \beta 
- \s_0 (2 \eta + \Delta_{\S^2} \eta)
\end{pmatrix}
= \begin{pmatrix} \om \pois & - G(0) \\ 
- \s_0 (2 + \Delta_{\S^2}) & \om \pois 
\end{pmatrix}
\begin{pmatrix}
\eta \\ \beta 
\end{pmatrix}.
\end{equation}
The operators $G(0)$ and $2 + \Delta_{\S^2}$ are diagonalized by the real spherical harmonics, 
with 
\[
G(0) \ph_\ell = \ell \ph_\ell, \quad \ 
- (2 + \Delta_{\S^2}) \ph_\ell = (\ell + 2)(\ell - 1) \ph_\ell \quad \
\forall \ph_\ell \in \mH_\ell(\S^2,\R), \quad 
\ell \in \N_0,
\]
where $\mH_\ell(\S^2,\R)$ is the space of the real spherical harmonics of degree $\ell$; 
as is well known, it is a vector space of dimension $(2\ell+1)$ on $\R$. 
The operator $\pois$ can also be block-diagonalized by real spherical harmonics; 
in particular, the restriction of $\pois$ to $\mH_\ell(\S^2,\R)$ 
can be represented by a block-diagonal matrix 
with $\ell$ 2-blocks $( \begin{smallmatrix} 0 & - m \\ m & 0 \end{smallmatrix} )$, 
$m = 1, \ldots, \ell$, and one 1-block $0$ 
(using complex spherical harmonics, $\pois$ becomes diagonal with complex eigenvalues $im$, 
$m = -\ell, \ldots, \ell$).

As is well known, an $L^2(\S^2,\R)$ orthonormal basis of $\mH_\ell(\S^2,\R)$ 
is given by the classical real spherical harmonics
\begin{align}
Y_{\ell,m}^{(\cos)}(\theta,\phi) 
& = c_\ell^{(m)} (\sin \theta)^m P_\ell^{(m)}(\cos \theta) \cos(m \phi), 
\quad m = 0, \ldots, \ell,
\notag \\ 
Y_{\ell,m}^{(\sin)}(\theta,\phi) 
& = c_\ell^{(m)} (\sin \theta)^m P_\ell^{(m)}(\cos \theta) \sin(m \phi), 
\quad m = 1, \ldots, \ell,
\label{real.spherical.harmonics.Y}
\end{align}
commonly written as functions of the angles 
$\theta \in [0, \pi]$,
$\phi \in [0, 2 \pi]$
expressing any point $x \in \S^2$ in spherical coordinates 
$x_1 = \sin \theta \cos \phi$, 
$x_2 = \sin \theta \sin \phi$, 
$x_3 = \cos \theta$. 
Here $P_\ell^{(m)}(t)$ is the $m$-th derivative 
of the ordinary Legendre polynomial $P_\ell(t)$, 
which is a polynomial of degree $\ell$ with real coefficients, 
with parity $P_\ell(-t) = (-1)^\ell P_\ell(t)$, 
and $c_\ell^{(m)} \in \R$ is a normalizing coefficient;
see, e.g., \cite{Atkinson.Han}, Example 2.48 in Section 2.11. 
For $a_m, b_m \in \R$, the sum 
$a_m \cos(m \phi) + b_m \sin(m \phi)$ 
is the real part of the complex number $(a_m - i b_m) e^{im\phi}$. 
Hence any linear combination of \eqref{real.spherical.harmonics.Y}
with real coefficients $a_m, b_m$ can be written as 
\begin{align*}
& \sum_{m=0}^\ell a_m Y_{\ell,m}^{(\cos)}(\theta,\phi) 
+ \sum_{m=1}^\ell b_m Y_{\ell,m}^{(\sin)}(\theta,\phi) 
\\ 
& \quad 
= a_0 c_\ell^{(0)} P_\ell(\cos \theta) 
+ \sum_{m=1}^\ell c_\ell^{(m)} P_\ell^{(m)}(\cos \theta) 
\Re \{ (a_m - i b_m) (\sin \theta)^m e^{im\phi} \},
\end{align*}
which, in Cartesian coordinates, becomes
\[
= a_0 c_\ell^{(0)} P_\ell(x_3) 
+ \sum_{m=1}^\ell c_\ell^{(m)} P_\ell^{(m)}( x_3 ) 
\Re \{ (a_m - i b_m) (x_1 + i x_2)^m \}.
\]
Hence the functions
\begin{align}
\ph_{\ell,0}(x) := c_{\ell}^{(0)} P_\ell(x_3), 
\quad \ 
\ph_{\ell,m}^{(\Re)}(x) & := c_{\ell}^{(m)} P_\ell^{(m)}(x_3) \Re[ (x_1 + i x_2)^m ], 
\quad m = 1, \ldots, \ell,
\notag \\
\ph_{\ell,m}^{(\Im)}(x) & := c_{\ell}^{(m)} P_\ell^{(m)}(x_3) \Im[ (x_1 + i x_2)^{m} ], 
\quad m = 1, \ldots, \ell, 
\label{def.ph.ell.m.Re.Im}
\end{align}
with $x \in \S^2$, 
form an $L^2(\S^2,\R)$ orthonormal basis 
of the real vector space $\mH_\ell(\S^2,\R)$. 
For notation convenience, we denote 
\begin{equation} \label{def.ph.ell.m}
\ph_{\ell,m} := \ph_{\ell,m}^{(\Re)}, \quad  
\ph_{\ell,-m} := \ph_{\ell,m}^{(\Im)}, \quad 
m = 1, \ldots, \ell.
\end{equation} 
Thus, $\{ \ph_{\ell,m} : m = - \ell, \ldots, \ell \}$ 
is an $L^2(\S^2,\R)$ orthonormal basis of $\mH_\ell(\S^2,\R)$.
This is the basis of Legendre real spherical harmonics in Cartesian coordinates.

\begin{lemma}
\label{lemma:block.diag.mS}
One has 
$\pois \ph_{\ell,m} = - m \ph_{\ell,-m}$ 
for all $m = -\ell, \ldots, \ell$,
all $\ell \in \N_0$.
\end{lemma}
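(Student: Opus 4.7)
The plan is to reduce everything to a one-line computation by exploiting the fact that $\pois$ is nothing but the generator of rotations around the $x_3$-axis, and that the Legendre harmonics in \eqref{def.ph.ell.m.Re.Im} are built from the holomorphic monomials $(x_1+ix_2)^m$.

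First, I would use formula \eqref{Poisson.brackets}, which allows me to compute $\pois$ on any smooth extension. Since each $\ph_{\ell,m}^{(\Re)}$, $\ph_{\ell,m}^{(\Im)}$ is already a polynomial on $\R^3$, I can apply
\[
\pois = x_1 \pa_{x_2} - x_2 \pa_{x_1}
\]
directly to the Cartesian expressions in \eqref{def.ph.ell.m.Re.Im}. Setting $z := x_1 + i x_2$, I compute $\pa_{x_1} z = 1$ and $\pa_{x_2} z = i$, whence
\[
\pois z^m = m z^{m-1}(i x_1 - x_2) = i m (x_1 + i x_2) z^{m-1} = i m \, z^m.
\]
Since $P_\ell^{(m)}(x_3)$ depends only on $x_3$, it is annihilated by $\pois$, so the Leibniz rule yields
\[
\pois\bigl[ P_\ell^{(m)}(x_3) z^m \bigr] = i m \, P_\ell^{(m)}(x_3) z^m.
\]

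Next, I would take real and imaginary parts of this identity. For $m \geq 1$, using \eqref{def.ph.ell.m.Re.Im} and \eqref{def.ph.ell.m},
\begin{align*}
\pois \ph_{\ell,m}
&= c_\ell^{(m)} P_\ell^{(m)}(x_3) \, \Re(i m z^m)
= - m \, c_\ell^{(m)} P_\ell^{(m)}(x_3) \, \Im z^m
= - m \, \ph_{\ell,-m}, \\
\pois \ph_{\ell,-m}
&= c_\ell^{(m)} P_\ell^{(m)}(x_3) \, \Im(i m z^m)
= m \, c_\ell^{(m)} P_\ell^{(m)}(x_3) \, \Re z^m
= m \, \ph_{\ell,m},
\end{align*}
and the second line reads $-(-m)\ph_{\ell,-(-m)}$, as required. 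For $m=0$, $\ph_{\ell,0}$ depends only on $x_3$, so $\pois \ph_{\ell,0} = 0$, which matches $-0 \cdot \ph_{\ell,0}$.

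There is no serious obstacle: the whole proof reduces to the eigenvalue identity $\pois z^m = i m \, z^m$, reflecting the fact that $z \mapsto e^{i\theta}z$ is the action on the equatorial plane of the rotation $R(\theta)$ in \eqref{def.R(t)} generated by $\pois$.
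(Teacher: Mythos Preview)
Your proof is correct and follows essentially the same approach as the paper: both reduce to the identity $(x_1\pa_{x_2}-x_2\pa_{x_1})(x_1+ix_2)^m = im(x_1+ix_2)^m$ via \eqref{Poisson.brackets}, take real and imaginary parts, and note that the $P_\ell^{(m)}(x_3)$ factor is annihilated. The only cosmetic difference is your use of the shorthand $z=x_1+ix_2$ and the explicit mention of the Leibniz rule.
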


\begin{proof} 
To apply \eqref{Poisson.brackets}, 
we observe that the functions in \eqref{def.ph.ell.m.Re.Im} 
have a natural extension, which we write without changing the notation, 
to a neighborhood of the sphere; 
such extensions are simply obtained by extending the validity of the formulae 
in \eqref{def.ph.ell.m.Re.Im}. 
In general, these extensions are neither harmonic nor homogeneous, 
but \eqref{Poisson.brackets} holds without requiring those properties. 
One has 
\[
(x_1 \pa_{x_2} - x_2 \pa_{x_1}) \{ (x_1 + i x_2)^m \} 
= i m (x_1 + i x_2)^m 
\]
for all $m \in \N$, all $(x_1, x_2) \in \R^2$, 
and   
\begin{align*}
(x_1 \pa_{x_2} - x_2 \pa_{x_1}) \Re \{ (x_1 + i x_2)^m \} 
& = - m \, \Im \{ (x_1 + i x_2)^m \}, 
\\
(x_1 \pa_{x_2} - x_2 \pa_{x_1}) \Im \{ (x_1 + i x_2)^m \} 
& = m \, \Re \{ (x_1 + i x_2)^m \}. 
\end{align*}
Therefore, by \eqref{Poisson.brackets}, we obtain
\begin{equation} \label{Poisson.brackets.phi.ell.m.Re.Im}
\pois \ph_{\ell,0}(x) = 0, \quad  
\pois \ph_{\ell,m}^{(\Re)}(x) = - m \ph_{\ell,m}^{(\Im)}, \quad 
\pois \ph_{\ell,m}^{(\Im)}(x) = m \ph_{\ell,m}^{(\Re)}, \quad 
m = 1, \ldots, \ell.
\end{equation}
Recalling the notation in \eqref{def.ph.ell.m}, 
this completes the proof.
\end{proof}

Given $(f,g) \in H^{s}(\S^2) \times H^{s-\frac12}(\S^2)$, 
we study the equation $L(\eta, \beta) = (f,g)$.
We use the real spherical harmonics $(\ph_{\ell,m})$ of Lemma \ref{lemma:block.diag.mS}
to decompose 
\begin{equation} \label{def.mT}
\eta = \sum_{(\ell,m) \in \mT} \hat \eta_{\ell,m} \ph_{\ell,m}, 
\quad \mT = \bigcup_{\ell \in \N_0} \mT_\ell, 
\quad \mT_\ell = \{ (\ell,m) : m = - \ell, \ldots, \ell \},
\end{equation}
with $\hat \eta_{\ell,m} \in \R$, 
and similarly for $\beta, f, g$. 
Hence 
\[
\pois \eta 
= \sum_{(\ell,m) \in \mT} \hat \eta_{\ell,m} (-m) \ph_{\ell,-m} 
= \sum_{(\ell,m) \in \mT} m \hat \eta_{\ell,-m} \ph_{\ell,m}.
\] 
One has $L(\eta,\beta) = (f,g)$ if and only if 
\begin{equation} \label{syst.coeff.lin.eq}
- \om m \hat \eta_{\ell,m} - \ell \hat \beta_{\ell,-m} 
= \hat f_{\ell,-m}, 
\quad \ 
\sigma_0 (\ell+2)(\ell-1) \hat \eta_{\ell,m} + \om m \hat \beta_{\ell,-m} 
= \hat g_{\ell,m}
\quad \ 
\forall (\ell,m) \in \mT,
\end{equation}
that is,
\begin{equation} \label{syst.coeff.matrix}
L_{\ell,m}
\begin{pmatrix} 
\hat \eta_{\ell,m} \\ 
\hat \beta_{\ell,-m} 
\end{pmatrix}
=
\begin{pmatrix} 
\hat f_{\ell,-m} \\ 
\hat g_{\ell,m} 
\end{pmatrix}
\quad \ 
\forall (\ell,m) \in \mT, 
\quad \ 
L_{\ell,m} := \begin{pmatrix} 
- \om m 										& - \ell \\
\sigma_0 (\ell+2)(\ell-1) & \om m
\end{pmatrix}.
\end{equation}
One has 
\begin{equation} \label{det.L.ell.j}
\det L_{\ell,m} = - \om^2 m^2 + \sigma_0 (\ell+2)(\ell-1)\ell,
\end{equation}
and bifurcation can only occur at values of $\om$ 
such that \eqref{det.L.ell.j} vanishes at some $(\ell,m)$. 
Thus, we assume that 
\begin{equation} \label{om.fix}
\om = \sqrt{\sigma_0} \frac{ \sqrt{(\ell_0+2)(\ell_0-1)\ell_0} }{m_0}
\end{equation}
for some fixed integers $\ell_0, m_0$, 
with $\ell_0 \geq 2$ and $1 \leq m_0 \leq \ell_0$.
With $\om$ in \eqref{om.fix}, a pair $(\ell,m)$ gives 
$\det L_{\ell m} = 0$ if and only if
\begin{equation} \label{dioph.eq.travelling}
(\ell+2)(\ell-1)\ell = c_0 m^2, \quad \ 
c_0 := (\ell_0+2)(\ell_0-1) \ell_0 m_0^{-2}.
\end{equation}

\begin{lemma}  
\label{lemma:bound.det.L.ell.j}
Let $\ell_0, m_0 \in \N$, with $\ell_0 \geq 2$, $1 \leq m_0 \leq \ell_0$. 
Let $S \subset \mT$ be the set of the pairs $(\ell,m) \in \mT$ satisfying 
\eqref{dioph.eq.travelling}. 
Then $S$ has a finite number of elements, 
which are $(\ell,m) = (\ell_0, m_0)$, $(\ell_0, -m_0)$, $(1,0)$, $(0,0)$, 
and possibly finitely many other pairs, all of which with $\ell \leq c_0$. 
Moreover, assuming \eqref{om.fix}, 
there exists a constant $C > 0$, depending on $\ell_0, \sigma_0$, such that 
\begin{equation} \label{bound.det.L.ell.j}
|\det L_{\ell, m}| \geq C \ell^3 
\quad \forall (\ell,m) \in \mT \setminus S. 
\end{equation}
\end{lemma}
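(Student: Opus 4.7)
\medskip

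\noindent\textbf{Proof proposal.} The plan is to combine a direct verification with a simple size estimate, splitting the analysis of $S$ and of $|\det L_{\ell,m}|$ into a ``tail'' region where $\ell$ is large and a ``head'' region consisting of only finitely many pairs.

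First, I would check by substitution that the four pairs $(0,0)$, $(1,0)$, $(\ell_0, m_0)$, $(\ell_0, -m_0)$ satisfy \eqref{dioph.eq.travelling}. For $(0,0)$ and $(1,0)$ both sides vanish, and for $(\ell_0, \pm m_0)$ the definition of $c_0$ makes both sides equal to $(\ell_0+2)(\ell_0-1)\ell_0$. These pairs are distinct because $\ell_0\ge 2$ and $1\le m_0\le \ell_0$.

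Next, I would show that every $(\ell,m)\in S$ with $\ell\ge 2$ satisfies $\ell \leq c_0$. Using $m^2\le \ell^2$ in \eqref{dioph.eq.travelling} and dividing by $\ell$ gives
\[
(\ell+2)(\ell-1) = \ell^2+\ell-2 \;\leq\; c_0\,\ell,
\]
hence $\ell \le c_0 - 1 + 2/\ell \le c_0$ for $\ell\ge 2$. For $\ell\in\{0,1\}$ a direct inspection shows that only $(0,0)$ and $(1,0)$ belong to $S$ (since for $\ell=1$ the LHS vanishes while the RHS is $c_0 m^2$, forcing $m=0$). In particular $S$ is finite, bounded in size by $c_0(2c_0+1)+2$.

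For the determinant bound, using \eqref{om.fix} we rewrite
\[
\det L_{\ell,m} = \sigma_0\bigl[(\ell+2)(\ell-1)\ell - c_0\, m^2\bigr].
\]
I split into two regimes. In the \emph{tail} regime $\ell \ge 2c_0+4$, since $|m|\le \ell$ we have
\[
(\ell+2)(\ell-1)\ell - c_0 m^2 \;\ge\; \ell^3 + \ell^2 - 2\ell - c_0\ell^2 \;=\; \ell^2(\ell-c_0) + \ell^2 - 2\ell \;\ge\; \tfrac{1}{2}\ell^3,
\]
so $|\det L_{\ell,m}|\ge \tfrac{\sigma_0}{2}\,\ell^3$ with no exceptions in this range. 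In the \emph{head} regime $\ell < 2c_0+4$, there are only finitely many pairs $(\ell,m)\in\mT\setminus S$ to consider, and for each of them $\det L_{\ell,m}\neq 0$ by definition of $S$. Setting
\[
C_1 := \min\bigl\{\,|\det L_{\ell,m}| \,:\, (\ell,m)\in\mT\setminus S,\ \ell < 2c_0+4\,\bigr\} \;>\;0,
\]
we obtain $|\det L_{\ell,m}|\ge C_1 \ge C_1(2c_0+4)^{-3}\ell^3$ in the head regime. Taking $C := \min\{\sigma_0/2,\ C_1(2c_0+4)^{-3}\}$, the bound \eqref{bound.det.L.ell.j} follows on all of $\mT\setminus S$.

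No step is particularly delicate here: the only mildly subtle point is the Diophantine fact that $S$ is finite, which follows from the elementary inequality $m^2\le\ell^2$ rather than from any arithmetical finiteness statement, and the ``head vs.\ tail'' split that handles the $\ell^3$ growth separately from the finitely many sporadic exceptions.
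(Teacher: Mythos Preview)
Your proof is correct and follows essentially the same approach as the paper: you bound $\ell \le c_0$ for $(\ell,m)\in S$ via $m^2\le \ell^2$, then split the determinant estimate into a tail region (where $(\ell+2)(\ell-1)\ell - c_0 m^2 \ge \tfrac12 \ell^3$) and a finite head region handled by taking a minimum. The only cosmetic differences are your slightly larger tail threshold $2c_0+4$ versus the paper's $2c_0$, and your explicit verification that the four listed pairs lie in $S$ and are distinct.
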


\begin{proof} 
For any $(\ell,m) \in \mT$ one has $m^2 \leq \ell^2$. 
If $(\ell,m) \in S$, then 
$(\ell+2)(\ell-1)\ell = c_0 m^2 \leq c_0 \ell^2$.  
For $\ell \geq 2$ one has $\ell^2 \leq (\ell+2)(\ell-1)$, 
and we deduce that $\ell \leq c_0$ for all $(\ell,m) \in S$ with $\ell \geq 2$. 
The bound $\ell \leq c_0$ holds also for $\ell=0,1$ because 
$c_0 \geq (\ell_0+2)(\ell_0-1) \ell_0^{-1} \geq \ell_0 \geq 2$.

For $(\ell,m) \in \mT \setminus S$, one has $\det L_{\ell,m} \neq 0$. 
Moreover, $\det L_{\ell,m} \geq \det L_{\ell \ell}$ because $|m| \leq \ell$, 
and   for $\ell \geq 2 c_0$ it holds  $\det L_{\ell \ell}$ 
$= \sigma_0 (\ell+2)(\ell-1)\ell - \sigma_0 c_0 \ell^2 
\geq \sigma_0 \ell^3 - \sigma_0 c_0 \ell^2 \geq \frac12 \sigma_0 \ell^3$. 
On the other hand, 
$\min \{ |\det L_{\ell m}| \ell^{-3} : (\ell,m) \in \mT \setminus S, \ \ell < 2 c_0 \}$ 
is also positive because it is the minimum of a finite set of positive numbers. 
\end{proof}

For $(\ell,m) \in \mT \setminus S$, one has $\det L_{\ell,m} \neq 0$, 
and system \eqref{syst.coeff.matrix} with $(\hat f_{\ell,-m}, \hat g_{\ell,m}) = (0,0)$ 
has only the trivial solution $(\hat \eta_{\ell,m}, \hat \beta_{\ell,-m}) = (0,0)$. 
For $(\ell,m) \in S$, we distinguish $\ell=0$ from $\ell > 0$. 
For $(\ell,m) = (0,0)$, system \eqref{syst.coeff.matrix} 
with $(\hat f_{0,0}, \hat g_{0,0}) = (0,0)$ has solutions 
$( \hat \eta_{0,0} , \hat \beta_{0,0} ) = ( 0 , \lm )$, $\lm \in \R$. 
For $(\ell,m) \in S$ with $\ell \geq 1$, 
\eqref{syst.coeff.matrix} with $(\hat f_{\ell,-m}, \hat g_{\ell,m}) = (0,0)$ 
has solutions $(\hat \eta_{\ell,m}, \hat \beta_{\ell,-m}) = \lm (\ell, - \om m)$, $\lm \in \R$. 
Hence the kernel of the linear operator $L$ is the finite dimensional space 
\begin{equation} \label{def.V}
V := \ker L = \bigg\{ 
\begin{pmatrix} \eta \\ \beta \end{pmatrix} 
= \lm_{0,0} \begin{pmatrix} 0 \\ \ph_{0,0} \end{pmatrix} 
+ \sum_{ \begin{subarray}{c} (\ell,m) \in S \\ \ell \geq 1 \end{subarray}} 
\lm_{\ell,m} \begin{pmatrix} \ell \ph_{\ell,m} \\  - \om m \ph_{\ell,-m} \end{pmatrix}
: \lm_{\ell,m} \in \R \bigg\}.
\end{equation}
Its orthogonal complement in $L^2(\S^2) \times L^2(\S^2)$ 
(we denote $L^2(\S^2) = L^2(\S^2,\R)$) 
is the vector space
\begin{multline*}
W := \bigg\{ 
\begin{pmatrix} \eta \\ \beta \end{pmatrix} 
= \lm_{0,0} \begin{pmatrix} \ph_{0,0} \\ 0 \end{pmatrix} 
+ \sum_{ \begin{subarray}{c} (\ell,m) \in S \\ \ell \geq 1 \end{subarray}}
\lm_{\ell,m} \begin{pmatrix} \om m \ph_{\ell,m} \\ \ell \ph_{\ell,-m} \end{pmatrix}
+ \sum_{(\ell,m) \in \mT \setminus S} 
\begin{pmatrix} \hat \eta_{\ell,m} \ph_{\ell,m} \\  \hat \beta_{\ell,m} \ph_{\ell,m} \end{pmatrix}
\\ 
: \lm_{\ell,m}, \hat \eta_{\ell,m}, \hat \beta_{\ell,m}  \in \R, \  
(\eta, \beta) \in L^2(\S^2) \times L^2(\S^2) \bigg\}.
\end{multline*} 
Thus $L^2(\S^2) \times L^2(\S^2) = V \oplus W$, 
and $V$ and $W$ are orthogonal 
with respect to the scalar product of $L^2(\S^2) \times L^2(\S^2)$.  
Moreover 
\[
H^{s+\frac32}(\S^2) \times H^{s+1}(\S^2) 
= V \oplus W^s, \quad 
W^s := W \cap (H^{s+\frac32}(\S^2) \times H^{s+1}(\S^2)). 
\]

For $(\ell,m) \in \mT \setminus S$, 
given any $\hat f_{\ell,-m} , \hat g_{\ell,m}$, 
there exists a unique solution of system \eqref{syst.coeff.matrix}, 
which is 
\begin{equation} \label{inv.formula.outside.S}
\hat \eta_{\ell,m} 
= \frac{\om m \hat f_{\ell,-m} + \ell \hat g_{\ell, m} }{ \det L_{\ell,m} }, 
\quad \ 
\hat \beta_{\ell,-m} 
= \frac{ - \sigma_0 (\ell+2)(\ell-1) \hat f_{\ell,-m} - \om m \hat g_{\ell, m} }{ \det L_{\ell,m} }.
\end{equation}
For $(\ell,m) \in S$, we distinguish the cases $\ell = 0$ and $\ell > 0$. 
For $(\ell,m) = (0,0)$, system \eqref{syst.coeff.matrix} has a solution only if $\hat f_{0,0} = 0$, 
and, in that case, the solutions are the pairs $(\hat \eta_{0,0} , \hat \beta_{0,0})$ 
with 
\begin{equation} \label{inv.formula.S.0}
\hat \eta_{0,0} = - (2 \sigma_0)^{-1} \hat g_{0,0}, \quad 
\hat \beta_{0,0} \in \R.
\end{equation} 
For $(\ell,m) \in S$ with $\ell \geq 1$, 
system \eqref{syst.coeff.matrix} has a solution only if 
$\om m \hat f_{\ell,-m} + \ell \hat g_{\ell,m} = 0$,
and, in that case, the solutions are the pairs $(\hat \eta_{\ell,m} , \hat \beta_{\ell,-m})$ 
with 
\begin{equation} \label{inv.formula.S.geq.1}
\hat \beta_{\ell,m} = - \ell^{-1} (\om m \hat \eta_{\ell,m} + \hat f_{\ell,-m}), \quad
\hat \eta_{\ell,m} \in \R. 
\end{equation}
Hence the range of $L$ is contained in the space 
\begin{multline} 
R := \bigg\{ 
\begin{pmatrix} f \\ g \end{pmatrix} 
= \hat g_{0,0} \begin{pmatrix} 0 \\ \ph_{0,0} \end{pmatrix} 
+ \sum_{ \begin{subarray}{c} (\ell,m) \in S \\ \ell \geq 1 \end{subarray}}
\hat f_{\ell,-m} \begin{pmatrix} \ph_{\ell,-m} \\ -\om m \ell^{-1} \ph_{\ell,m} \end{pmatrix}
+ \sum_{(\ell,m) \in \mT \setminus S} 
\begin{pmatrix} \hat f_{\ell,m} \ph_{\ell,m} \\ \hat g_{\ell,m} \ph_{\ell,m} \end{pmatrix}  
\\ 
: \hat f_{\ell,m}, \hat g_{\ell,m} \in \R, \ 
(f,g) \in L^2(\S^2) \times L^2(\S^2) \bigg\}.
\label{def.R}
\end{multline}
The orthogonal complement of $R$ with respect to the scalar product of 
$L^2(\S^2) \times L^2(\S^2)$ is the finite-dimensional space 
\begin{equation} \label{def.Z} 
Z := \bigg\{ 
\begin{pmatrix} f \\ g \end{pmatrix} 
= \lm_{0,0} \begin{pmatrix} \ph_{0,0} \\ 0 \end{pmatrix} 
+ \sum_{ \begin{subarray}{c} (\ell,m) \in S \\ \ell \geq 1 \end{subarray}} 
\lm_{\ell,m} \begin{pmatrix} \om m \ph_{\ell,-m} \\ \ell \ph_{\ell,m} \end{pmatrix}
: \lm_{\ell,m} \in \R \bigg\}.
\end{equation}
Thus, $L^2(\S^2) \times L^2(\S^2) = R \oplus Z$, 
and $R$ and $Z$ are orthogonal 
with respect to the scalar product of $L^2(\S^2) \times L^2(\S^2)$.  
Moreover 
\[
H^s(\S^2) \times H^{s-\frac12}(\S^2) 
= R^s \oplus Z, \quad 
R^s := R \cap (H^s(\S^2) \times H^{s-\frac12}(\S^2)).
\]

Let $L|_{W^s} : W^s \to R^s$ be the restriction of $L$ to $W^s$.  

\begin{lemma} 
\label{lemma:L.inv}
The linear map $L|_{W^s} : W^s \to R^s$ is bijective. 
Its inverse $(L|_{W^s})^{-1} : R^s \to W^s$ is bounded, with
\begin{equation} \label{inv.est.L}
\| (L|_{W^s})^{-1} (f,g) \|_{H^{s+\frac32}(\S^2) \times H^{s+1}(\S^2)} 
\leq C \| (f,g) \|_{H^{s}(\S^2) \times H^{s-\frac12}(\S^2)} 
\end{equation}
for all $(f,g) \in R^s$. 
The constant $C$ depends on $\sigma_0, \ell_0, s$.  
\end{lemma}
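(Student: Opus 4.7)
The plan is to work mode-by-mode in the basis of real Legendre spherical harmonics $\ph_{\ell,m}$ introduced in \eqref{def.ph.ell.m.Re.Im}-\eqref{def.ph.ell.m} and invert the 2-by-2 blocks $L_{\ell,m}$ of \eqref{syst.coeff.matrix}. The key point is that, with the present definitions of $V, W, R, Z$, the obstructions to invertibility are confined to the finite set $S$ identified in Lemma \ref{lemma:bound.det.L.ell.j}, and outside $S$ we gain three powers of $\ell$ from the lower bound $|\det L_{\ell,m}| \geq C\ell^3$.

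More concretely, I will first recall the spectral characterization
\[
\|u\|_{H^s(\S^2)}^2 \sim \sum_{(\ell,m)\in \mT} (1+\ell)^{2s} |\hat u_{\ell,m}|^2,
\]
which is equivalent to the chart-based definition \eqref{def.H.s.manifold}; this allows both the direct and inverse estimates to be read off directly from \eqref{syst.coeff.matrix}. Given $(f,g)\in R^s$ I then split the components into three contributions according to the type of mode. For $(\ell,m) \in \mT\setminus S$, formula \eqref{inv.formula.outside.S} provides the unique solution of \eqref{syst.coeff.matrix}, and Lemma \ref{lemma:bound.det.L.ell.j} together with the trivial bounds $|\om m| \leq C\ell$ and $\sigma_0(\ell+2)(\ell-1) \leq C\ell^2$ yields
\[
|\hat\eta_{\ell,m}| \leq \frac{C}{\ell^2}\bigl(|\hat f_{\ell,-m}| + |\hat g_{\ell,m}|\bigr),
\qquad
|\hat\beta_{\ell,-m}| \leq \frac{C}{\ell}|\hat f_{\ell,-m}| + \frac{C}{\ell^2}|\hat g_{\ell,m}|.
\]
For $(\ell,m) \in S$ with $\ell\geq 1$, an element of $R$ at this mode has the form $\lm(\ph_{\ell,-m}, -\om m \ell^{-1} \ph_{\ell,m})$ by \eqref{def.R}, and applying $L_{\ell,m}$ to the $W$-mode $\mu(\om m \ph_{\ell,m}, \ell\ph_{\ell,-m})$ in \eqref{def.V} gives, using the relation $\om^2 m^2 = \sigma_0(\ell+2)(\ell-1)\ell$, a scalar multiple of the $R$-direction with invertible proportionality constant $-\ell[\sigma_0(\ell+2)(\ell-1) + \ell]$; this determines $\mu$ uniquely in terms of $\lm$. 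For $(\ell,m)=(0,0)$, \eqref{inv.formula.S.0} selects $\hat\eta_{0,0} = -(2\sigma_0)^{-1}\hat g_{0,0}$ and the $W$-condition forces $\hat\beta_{0,0}=0$. Since $S$ is finite by Lemma \ref{lemma:bound.det.L.ell.j}, the contribution of these modes is bounded by a finite constant depending on $\sigma_0$ and $\ell_0$.

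Combining the three cases produces a well-defined linear map $(L|_{W^s})^{-1} : R^s \to W^s$. Summing the mode-by-mode estimates with weight $(1+\ell)^{2s+3}$ for $\eta$ and $(1+\ell)^{2s+2}$ for $\beta$, and using $\ell^{2s+3}|\hat\eta_{\ell,m}|^2 \leq C\ell^{2s-1}(|\hat f|^2+|\hat g|^2)$ together with $\ell^{2s-1}\leq \max(\ell^{2s},\ell^{2s-1})$, one gets
\[
\|\eta\|_{H^{s+\frac32}(\S^2)}^2 + \|\beta\|_{H^{s+1}(\S^2)}^2 \leq C_s\bigl(\|f\|_{H^s(\S^2)}^2 + \|g\|_{H^{s-\frac12}(\S^2)}^2\bigr),
\]
which is \eqref{inv.est.L}. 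Injectivity of $L|_{W^s}$ is built into the construction since all kernel modes have been removed by passing to $W^s$, and surjectivity onto $R^s$ holds because on each mode the explicit formulas \eqref{inv.formula.outside.S}, \eqref{inv.formula.S.0}, \eqref{inv.formula.S.geq.1} yield a preimage in $W^s$ by the bounds just derived.

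The main obstacle is purely bookkeeping: one must carefully keep track of the index swap $(\ell,m) \leftrightarrow (\ell,-m)$ appearing in \eqref{syst.coeff.matrix} (a consequence of Lemma \ref{lemma:block.diag.mS}) so that the decompositions of $V, W, R, Z$ in \eqref{def.V}-\eqref{def.Z} are correctly matched to the coefficients of $f, g$; and one must ensure the equivalence between the spectral and chart-based $H^s(\S^2)$-norms is invoked in a form that accommodates the slight asymmetry $H^s \times H^{s-\frac12}$ on the target side. Beyond these points the argument is mechanical, since the elliptic gain of three orders from $|\det L_{\ell,m}| \geq C\ell^3$ is more than sufficient for the tame loss demanded by \eqref{inv.est.L}.
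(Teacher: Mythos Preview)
Your proposal is correct and follows essentially the same approach as the paper: both invert $L$ mode-by-mode using the explicit formulas \eqref{inv.formula.outside.S}, \eqref{inv.formula.S.0}, \eqref{inv.formula.S.geq.1} together with the determinant lower bound \eqref{bound.det.L.ell.j}, and pass to the spectral norm \eqref{def.spec.norm} to read off the estimate. The only cosmetic difference is that for $(\ell,m)\in S$ with $\ell\geq 1$ you compute the proportionality constant $-\ell[\sigma_0(\ell+2)(\ell-1)+\ell]$ directly, whereas the paper records the equivalent formula \eqref{inv.formula.S.geq.1.selected.W} via $\ell^2+\om^2 m^2$.
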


To prove the lemma, we use spectral norms for the spaces $H^s(\S^2)$.
Given any $\eta \in H^s(\S^2)$ written as the series of spherical harmonics \eqref{def.mT}, 
we define 
\begin{equation} \label{def.spec.norm}
\| \eta \|_{*,s} := 
\Big( |\hat \eta_{0,0}|^2 
+ \sum_{(\ell,m) \in \mT \setminus \mT_0} |\hat \eta_{\ell,m}|^2 \ell^{2s} \Big)^{\frac12}.
\end{equation}

\begin{proof}[Proof of Lemma \ref{lemma:L.inv}]
The map $L|_{W^s}$ is injective on $W^s$ by construction; 
we prove that it is surjective onto $R^s$.
Let $(f,g) \in R^s$, with coefficients $(\hat f_{\ell,m}, \hat g_{\ell,m})$.
For $(\ell,m) \in \mT \setminus S$, the solution $(\hat \eta_{\ell,m}, \hat \beta_{\ell,m})$ 
of system \eqref{syst.coeff.matrix} is uniquely determined by \eqref{inv.formula.outside.S}. 
For $(\ell,m) = (0,0)$, the infinitely many solutions $(\hat \eta_{0,0}, \hat \beta_{0,0})$  
of system \eqref{syst.coeff.matrix} are given by \eqref{inv.formula.S.0},  
and the condition $(\eta, \beta) \in W$ selects just one of these, 
which is $\hat \beta_{0,0} = 0$.  
For $(\ell,m) \in S$ with $\ell \geq 1$, the infinitely many solutions 
$(\hat \eta_{\ell,m}, \hat \beta_{\ell,-m})$ of \eqref{syst.coeff.matrix} 
are given by \eqref{inv.formula.S.geq.1},  
and the condition $(\eta, \beta) \in W$ selects just one of these, which is
\begin{equation} \label{inv.formula.S.geq.1.selected.W}
\hat \eta_{\ell,m} 
= \frac{- \om m \hat f_{\ell,-m}}{\ell^2 + \om^2 m^2}
= \frac{\ell \hat g_{\ell,m}}{\ell^2 + \om^2 m^2}, 
\quad 
\hat \beta_{\ell,-m} 
= \frac{- \ell \hat f_{\ell,-m}}{\ell^2 + \om^2 m^2}.
\end{equation}
Hence the inversion problem $L(\eta,\beta) = (f,g)$, $(\eta,\beta) \in W^s$ 
has a unique candidate solution $(\eta,\beta)$ determined by its coefficients 
$(\hat \eta_{\ell,m}, \hat \beta_{\ell,m})$. 
We have to prove that this candidate is an element of $W^s$. 
For $(\ell,m) \in \mT \setminus S$, formula \eqref{inv.formula.outside.S}
and bound \eqref{bound.det.L.ell.j} imply that 
\begin{equation} \label{bound.coeff.outside.S}
|\hat \eta_{\ell,m}| \leq C \ell^{-2} ( |\hat f_{\ell,-m}| + |\hat g_{\ell,m}|), 
\quad 
|\hat \beta_{\ell,-m}| \leq C ( \ell^{-1} |\hat f_{\ell,-m}| + \ell^{-2} |\hat g_{\ell,m}|),
\end{equation}
for some constant $C>0$ depending on $\sigma_0, \ell_0$. 
From \eqref{bound.coeff.outside.S} it follows that 
\begin{equation} \label{bound.coeff.outside.S.with.s}
\ell^{s+\frac32} |\hat \eta_{\ell,m}| + \ell^{s+1} |\hat \beta_{\ell,-m}| 
\leq 2C ( \ell^s |\hat f_{\ell,-m}| + \ell^{s-\frac12} |\hat g_{\ell,m}| ) 
\end{equation}
for all $(\ell,m) \in \mT \setminus S$. 
For $(\ell,m) = (0,0)$ one has $|\hat \eta_{0,0}| = (2 \sigma_0)^{-1} |\hat g_{0,0}|$, 
$\hat \beta_{0,0} = 0$. 
For $(\ell,m) \in S$ with $\ell \geq 1$, \eqref{inv.formula.S.geq.1} implies that 
$|\hat \eta_{\ell,m}| \leq \ell^{-1} |\hat g_{\ell,m}|$  
and $|\hat \beta_{\ell,m}| \leq \ell^{-1} |\hat f_{\ell,m}|$,
but $\ell$ is in the bounded interval $1 \leq \ell \leq c_0$, 
see Lemma \ref{lemma:bound.det.L.ell.j}, 
and therefore $\ell^{s+\frac12} \leq c_0 \, \ell^{s-\frac12}$. 
This implies that inequality \eqref{bound.coeff.outside.S.with.s} also holds 
for $(\ell,m) \in S$ with $\ell \geq 1$, 
with a (possibly different) constant $C$ depending on $\s_0,\ell_0$.
As a consequence, one has 
\[
\| \eta \|_{*,s+\frac32}^2 + \| \beta \|_{*,s+1}^2 
\leq C ( \| f \|_{*,s}^2 + \| g \|_{*,s-\frac12}^2 ).
\]
Hence $(\eta, \beta) \in W^s$, 
the inverse map $(L|_{W^s})^{-1} : R^s \to W^s$ is well-defined and bounded, 
and the proof is complete. 
\end{proof}

\subsection{Symmetries and bifurcation from a simple eigenvalue}

The set $S$ in Lemma \ref{lemma:bound.det.L.ell.j} 
has at least the 4 elements $(0,0)$, $(1,0)$, $(\ell_0, \pm m_0)$,
and consequently the kernel of $L$ has dimension $\dim V \geq 4$. 
In this subsection we use the symmetries of equation \eqref{eq.mF} 
to reduce the problem to the case of bifurcation from a simple eigenvalue. 

First, we observe that the space of functions that are even in $x_3$ is an invariant set
for the map $\mF(\om, \cdot)$. 

\begin{lemma} 
\label{lemma:mF.x3}
If $(\eta, \beta)$ is even in $x_3$, 
then also $\mF(\om, \eta, \beta)$ is an even function of $x_3$. 
\end{lemma}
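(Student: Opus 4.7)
The plan is to reduce the statement to the invariance of every scalar building block in the definition of $\mF_1, \mF_2$ under the reflection $x \mapsto Mx$, where $M = \mathrm{diag}(1,1,-1)$. Note that a function $f: \S^2 \to \R$ is even in $x_3$ precisely when $f(Mx) = f(x)$ for all $x \in \S^2$, and $M$ is orthogonal, so Lemma \ref{lemma:pax.transf.law} applies to it. Thus, given $\eta, \beta$ even in $x_3$, setting $h := \eta$ and $\psi := \beta$ so that $h(x) = \eta(Mx)$ and $\psi(x) = \beta(Mx)$, identities \eqref{tool.M.03}, \eqref{tool.M.06}, \eqref{tool.M.07}, \eqref{tool.M.09} say respectively that $|\grad_{\S^2} \eta|$, $H(\eta)$, $\la \grad_{\S^2} \eta, \grad_{\S^2} \beta \ra$ and $G(\eta)\beta$ are even in $x_3$. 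The quantities $1+\eta$ and $|\grad_{\S^2}\beta|^2$ (taking the same identity with $\psi$ in place of $h$) are obviously even as well, and therefore so is $J(\eta) = \sqrt{(1+\eta)^2 + |\grad_{\S^2}\eta|^2}$.

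The one term not directly covered by Lemma \ref{lemma:pax.transf.law} is the rotation operator $\pois$. For this I would use formula \eqref{Poisson.brackets}: for any extension $\tilde f$ of $f$ to a neighborhood of $\S^2$,
\[
\pois f(x) = (x_1 \pa_{x_2} - x_2 \pa_{x_1}) \tilde f(x).
\]
Choose $\tilde f$ to be the $0$-homogeneous extension of $f$; then $\tilde f$ is also even in $x_3$, i.e., $\tilde f \circ M = \tilde f$. Differentiating this identity via the chain rule gives $(\pa_{x_j} \tilde f)(Mx) = \pa_{x_j} \tilde f(x)$ for $j=1,2$ (while $(\pa_{x_3} \tilde f)(Mx) = -\pa_{x_3}\tilde f(x)$, which we do not need). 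Since $(Mx)_1 = x_1$, $(Mx)_2 = x_2$, substituting $Mx$ for $x$ in the displayed formula yields $\pois f(Mx) = \pois f(x)$. Hence $\pois \eta$ and $\pois \beta$ are even in $x_3$.

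Finally, inspecting the definitions \eqref{def.mF.1}, \eqref{def.mF.2}, each summand of $\mF_1(\om,u)$ and of $\mF_2(\om,u)$ is built algebraically (product, quotient, square, square root) from the $x_3$-even functions listed above. Since parity is preserved under these algebraic operations, both components of $\mF(\om, \eta, \beta)$ are even in $x_3$, which concludes the proof. There is no genuine obstacle here; the only point requiring a brief direct verification (rather than a quotation of Lemma \ref{lemma:pax.transf.law}) is the commutation $M\mJ = \mJ M$ underlying step 2.
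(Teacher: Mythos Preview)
Your proof is correct and follows essentially the same approach as the paper: introduce the reflection $M=\mathrm{diag}(1,1,-1)$, invoke Lemma~\ref{lemma:pax.transf.law} to handle all the geometric building blocks, and check separately that $\pois$ preserves $x_3$-evenness. The only cosmetic difference is that the paper handles $\pois$ via the matrix identity $M^T\mJ M=\mJ$ applied to definition~\eqref{def.mL}, while you verify it through the coordinate formula~\eqref{Poisson.brackets}; as you yourself note, both amount to the commutation $M\mJ=\mJ M$.
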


\begin{proof}
Let $M = \mathrm{diag}(1, 1, -1)$ be the $3 \times 3$ matrix 
that maps $x = (x_1, x_2, x_3) \mapsto Mx = (x_1, x_2, - x_3)$ 
for all $x \in \R^3$. Then $M$ is an orthogonal matrix, i.e., 
$M M^T = M^T M = I$, and Lemma \ref{lemma:pax.transf.law} applies to $M$.
Now a function $\eta$ defined on $\S^2$ is even in $x_3$ 
if $\eta(Mx) = \eta(x)$ for all $x \in \S^2$; 
in the notation of Lemma \ref{lemma:pax.transf.law}, 
this means that $\eta(x) = \eta(Mx) = h(x)$, i.e., $\eta = h$.
Hence, if $\eta, \beta$ are even in $x_3$, then all the properties 
of Lemma \ref{lemma:pax.transf.law} hold with $h = \eta$ and $\psi = \beta$. 
In particular, $G(\eta)\beta$, $\la \grad_{\S^2} \eta, \grad_{\S^2} \beta \ra$, 
$|\grad_{\S^2} \eta|^2$, $H_\eta$ are all even in $x_3$. 
By \eqref{def.mL}, $\mM \eta$, $\mM \beta$ are also even in $x_3$ 
because $M^T \mJ M = \mJ$. 
Recalling the definition \eqref{def.mF.1}, \eqref{def.mF.2} of $\mF$, 
the proof is complete. 
\end{proof}

\begin{lemma}
\label{lemma:parity.x3.ph.ell.m}
The spherical harmonics $\ph_{\ell,m}$ of Lemma \ref{lemma:block.diag.mS} satisfy
\[
\ph_{\ell,m} (x_1, x_2, - x_3) = (-1)^{\ell-m} \ph_{\ell,m}(x) 
\quad \forall x = (x_1, x_2, x_3) \in \S^2, 
\quad \forall (\ell, m) \in \mT.
\]
\end{lemma}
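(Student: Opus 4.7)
The plan is to verify the parity formula directly from the explicit definitions \eqref{def.ph.ell.m.Re.Im} and \eqref{def.ph.ell.m} of $\ph_{\ell,m}$. The essential observation is that under the reflection $x_3 \mapsto -x_3$ the factors depending on $(x_1,x_2)$ are unchanged, while the factor $P_\ell^{(m)}(x_3)$ picks up a sign $(-1)^{\ell+m}$, which equals $(-1)^{\ell-m}$.

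First I recall that the ordinary Legendre polynomial $P_\ell$ has the well-known parity $P_\ell(-t)=(-1)^\ell P_\ell(t)$ (stated just above \eqref{def.ph.ell.m.Re.Im}). Differentiating $m$ times in $t$ and applying the chain rule, each derivative with respect to $t$ introduces a factor $-1$ on the left-hand side, so that
\[
P_\ell^{(m)}(-t) = (-1)^{\ell+m} P_\ell^{(m)}(t) = (-1)^{\ell-m} P_\ell^{(m)}(t),
\]
the last equality holding because $(-1)^{2m}=1$. In particular, for $m=0$ one has $P_\ell(-t)=(-1)^\ell P_\ell(t)$, consistent with the formula.

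Next, for any integer $m\geq 0$, the quantities $\Re[(x_1+ix_2)^m]$ and $\Im[(x_1+ix_2)^m]$ are polynomials in $(x_1,x_2)$ only, hence invariant under the reflection $(x_1,x_2,x_3)\mapsto(x_1,x_2,-x_3)$. Combining this with the parity of $P_\ell^{(m)}$, and recalling that $\ph_{\ell,0}(x)=c_\ell^{(0)}P_\ell(x_3)$, while $\ph_{\ell,m}$ and $\ph_{\ell,-m}$ (for $m\geq 1$) are products of $c_\ell^{(m)}P_\ell^{(m)}(x_3)$ with $\Re[(x_1+ix_2)^m]$ or $\Im[(x_1+ix_2)^m]$ respectively, I obtain, for $m\in\{0,1,\ldots,\ell\}$,
\[
\ph_{\ell,\pm m}(x_1,x_2,-x_3) = (-1)^{\ell-m}\ph_{\ell,\pm m}(x_1,x_2,x_3).
\]

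Finally, to match the statement of the lemma for negative indices, I note that using the convention \eqref{def.ph.ell.m} (where $\ph_{\ell,-m}$ corresponds to index $-m$ with $m\geq 1$), the exponent $\ell-(-m)=\ell+m$ differs from $\ell-m$ by an even integer $2m$, so $(-1)^{\ell-(-m)}=(-1)^{\ell-m}$. Hence the parity formula $(-1)^{\ell-m}$ is valid uniformly for every $m\in\{-\ell,\ldots,\ell\}$, completing the proof. There is no serious obstacle here: the result is purely an elementary parity check on the explicit Cartesian representation of the real spherical harmonics.
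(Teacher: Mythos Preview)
Your proof is correct and follows essentially the same approach as the paper: both use the parity $P_\ell(-t)=(-1)^\ell P_\ell(t)$ of the Legendre polynomials, differentiate $m$ times to obtain $P_\ell^{(m)}(-t)=(-1)^{\ell-m}P_\ell^{(m)}(t)$, observe that the $(x_1,x_2)$-dependent factors are unaffected by the reflection, and then use $(-1)^{\ell+m}=(-1)^{\ell-m}$ to cover the negative-index convention \eqref{def.ph.ell.m}.
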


\begin{proof}
The ordinary Legendre polynomial $P_\ell(t)$ of degree $\ell$ 
has parity $P_\ell(-t) = (-1)^\ell P_\ell(t)$. 
Hence its $m$-th derivative has parity $(-1)^{\ell-m}$, i.e., 
\[
P_\ell^{(m)}(-t) = (-1)^{\ell - m} P_\ell^{(m)}(t),
\quad m = 1, \ldots, \ell.
\]
Then, by \eqref{def.ph.ell.m.Re.Im}, $\ph_{\ell,0}$ has parity $(-1)^\ell$ 
as a function of $x_3$, 
and $\ph_{\ell,m}^{(\Re)}, \ph_{\ell,m}^{(\Im)}$, 
$m = 1, \ldots, \ell$,  
have parity $(-1)^{\ell-m}$ in $x_3$.
Since $(-1)^{\ell+m} = (-1)^{\ell-m}$, by \eqref{def.ph.ell.m} we obtain the thesis.
\end{proof}

If a function $\eta$ is even in $x_3$, 
then, by Lemma \ref{lemma:parity.x3.ph.ell.m}, 
only the spherical harmonics $\ph_{\ell,m}$ that are even in $x_3$ 
appear in its expansion, i.e., 
$\hat \eta_{\ell,m} = 0$ for all $(\ell,m) \in \mT$ such that $\ell - m$ is odd, 
and only coefficients $\hat \eta_{\ell,m}$ with $\ell - m$ even  
can be nonzero.

\medskip

Now we consider the parity with respect to $x_2$, 
and prove that $\mF(\om, \cdot)$ changes that parity, 
as it maps any (even, odd) pair into an (odd, even) one. 

\begin{lemma}
\label{lemma:mF.x2}
If $\eta$ is even in $x_2$ and $\beta$ is odd in $x_2$, 
then $\mF_1(\om,\eta,\beta)$ is odd in $x_2$ 
and $\mF_2(\om,\eta,\beta)$ is even in $x_2$.
\end{lemma}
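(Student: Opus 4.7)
My plan is to mimic the proof of Lemma \ref{lemma:mF.x3}, but replacing the reflection in $x_3$ by the reflection $M = \mathrm{diag}(1,-1,1)$ in $x_2$, which is orthogonal and symmetric, so that Lemma \ref{lemma:pax.transf.law} applies to it. The evenness/oddness hypotheses translate to: setting $h(x) = \eta(Mx)$ and $\psi(x) = \beta(Mx)$, one has $h = \eta$ and $\psi = -\beta$. Then the identities \eqref{tool.M.01}--\eqref{tool.M.09}, combined with linearity of $G(\eta)$ in the Dirichlet datum, determine the $x_2$-parity of each building block of $\mF$.

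Concretely, I would first read off from Lemma \ref{lemma:pax.transf.law} that $|\grad_{\S^2}\eta|^2$, $\Delta_{\S^2}\eta$, $H(\eta)$ and $|\grad_{\S^2}\beta|^2$ are even in $x_2$, while the mixed quantities $G(\eta)\beta$ and $\la\grad_{\S^2}\eta,\grad_{\S^2}\beta\ra$ are odd in $x_2$ (the extra minus sign coming from $\psi = -\beta$ and the linearity of $G(\eta)$). Next I would treat the ``new'' piece, namely $\pois$, by a direct computation: since $M\mJ M = -\mJ$ (check entries), if $\eta$ is even one differentiates $\eta(Mx) = \eta(x)$ to get $\grad_{\S^2}\eta(Mx) = M\grad_{\S^2}\eta(x)$, whence
\[
\pois\eta(Mx) = \la \mJ Mx, M\grad_{\S^2}\eta(x)\ra = \la M\mJ M x, \grad_{\S^2}\eta(x)\ra = -\pois\eta(x),
\]
so $\pois\eta$ is odd in $x_2$. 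The analogous computation for $\beta$ odd in $x_2$ produces an extra minus sign, giving $\pois\beta(Mx) = \pois\beta(x)$, i.e.\ $\pois\beta$ is even in $x_2$.

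Finally I would assemble the parities by inspecting \eqref{def.mF.1}, \eqref{def.mF.2}: in $\mF_1$, the first summand $\om\pois\eta$ is odd, and the second summand is (even)$\cdot$(odd) $=$ odd, so $\mF_1$ is odd in $x_2$. In $\mF_2$, the term $\om\pois\beta$ is even; inside the big square, $G(\eta)\beta$ is odd and $\la\grad_{\S^2}\beta,\grad_{\S^2}\eta\ra/((1+\eta)J)$ is odd, so their sum is odd and its square is even; the kinetic term $|\grad_{\S^2}\beta|^2/(2(1+\eta)^2)$ is even, and $\sigma_0 H(\eta)$ is even, so $\mF_2$ is even in $x_2$.

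None of the steps should be a real obstacle: the only mildly delicate point is keeping careful track of the sign in the $\pois$ computation, since $\pois$ is not covered by Lemma \ref{lemma:pax.transf.law} and requires the identity $M\mJ M = -\mJ$, which is specific to the fact that $M$ reflects exactly one of the two coordinates that $\mJ$ mixes (contrast with the $x_3$-reflection used in Lemma \ref{lemma:mF.x3}, for which $M^T\mJ M = \mJ$).
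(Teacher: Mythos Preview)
Your proposal is correct and follows essentially the same approach as the paper's proof: both use the orthogonal reflection $M = \mathrm{diag}(1,-1,1)$, apply Lemma \ref{lemma:pax.transf.law} with $h=\eta$, $\psi=-\beta$ to determine the parities of the building blocks, and handle $\pois$ separately via the key identity $M^T\mJ M = -\mJ$ (your $M\mJ M$, since $M=M^T$). The paper's argument is slightly more terse in the final assembly, but the content is identical.
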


\begin{proof}
Consider the matrix $M = \mathrm{diag}\,(1, -1, 1)$ 
that maps $x = (x_1, x_2, x_3)$ into $Mx = (x_1, - x_2, x_3)$ 
for all $x \in \R^3$. Then $M$ is an orthogonal matrix, i.e., 
$M M^T = M^T M = I$, and Lemma \ref{lemma:pax.transf.law} applies to $M$.
Let $\eta, \beta$ be defined on $\S^2$, and let $h(x) := \eta(Mx)$, 
$\psi(x) := \beta(Mx)$ for all $x \in \S^2$. 
Assume that $\eta$ is even in $x_2$ and that $\beta$ is odd in $x_2$. 
Then $h = \eta$ and $\psi = - \beta$. 
By Lemma \ref{lemma:pax.transf.law}, we deduce that 
$(1+\eta)$, $(1+\eta)^2$, $|\grad_{\S^2} \eta|^2$, 
$H_\eta$, $|\grad_{\S^2} \beta|^2$ are even in $x_2$, 
while $G(\eta)\beta$ and $\la \grad_{\S^2} \beta, \grad_{\S^2} \eta \ra$ are odd in $x_2$. 
Also, $\mM \eta$ is odd in $x_2$ and $\mM \beta$ is even in $x_2$, because 
$(\mM \eta)(Mx) = \la \mJ M x , (\grad_{\S^2} \eta)(Mx) \ra 
= \la M^T \mJ M x, \grad_{\S^2} h(x) \ra$, 
similarly $(\mM \beta)(Mx) = \la M^T \mJ M x, \grad_{\S^2} \psi(x) \ra$, 
and $M^T \mJ M = - \mJ$. 
By the definition \eqref{def.mF.1}, \eqref{def.mF.2} of $\mF_1$, $\mF_2$, 
the proof is complete. 
\end{proof}

\begin{lemma}
\label{lemma:parity.x2.ph.ell.m}
The spherical harmonics $\ph_{\ell,m}$ of Lemma \ref{lemma:block.diag.mS} satisfy
\begin{alignat*}{2}
\ph_{\ell,m} (x_1, - x_2, x_3) 
& = \ph_{\ell,m}(x),  
\quad & m & = 0, \ldots, \ell,
\\
\ph_{\ell,m} (x_1, - x_2, x_3) 
& = - \ph_{\ell,m}(x), 
\quad & m & = -\ell, \ldots, -1,
\end{alignat*}
for all $x = (x_1, x_2, x_3) \in \S^2$, 
all $\ell \in \N_0$.
\end{lemma}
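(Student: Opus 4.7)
The plan is to read off the $x_2$-parity directly from the explicit formulas \eqref{def.ph.ell.m.Re.Im} and the identification \eqref{def.ph.ell.m}. The key observation is that replacing $x_2$ by $-x_2$ sends $x_1+ix_2$ to its complex conjugate $x_1-ix_2$, and hence $(x_1+ix_2)^m$ to $\overline{(x_1+ix_2)^m}$. Therefore $\Re[(x_1+ix_2)^m]$ is invariant while $\Im[(x_1+ix_2)^m]$ changes sign.

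The proof would then proceed case by case. For $m=0$, the function $\ph_{\ell,0}(x)=c_\ell^{(0)}P_\ell(x_3)$ does not depend on $x_2$ at all, so it is trivially even in $x_2$. For $m=1,\ldots,\ell$, the factor $c_\ell^{(m)}P_\ell^{(m)}(x_3)$ is independent of $x_2$, and the remaining factor $\Re[(x_1+ix_2)^m]$ is invariant under $x_2\mapsto -x_2$ by the observation above; by \eqref{def.ph.ell.m} this gives $\ph_{\ell,m}(x_1,-x_2,x_3)=\ph_{\ell,m}(x)$, proving the first formula. For $m=-\ell,\ldots,-1$, write $m=-k$ with $k\in\{1,\ldots,\ell\}$; then by \eqref{def.ph.ell.m} we have $\ph_{\ell,-k}=\ph_{\ell,k}^{(\Im)}$, and the imaginary part $\Im[(x_1+ix_2)^k]$ flips sign under $x_2\mapsto -x_2$, yielding $\ph_{\ell,m}(x_1,-x_2,x_3)=-\ph_{\ell,m}(x)$, which is the second formula.

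There is no real obstacle here: the entire argument is a one-line consequence of the complex-conjugation behavior of $(x_1+ix_2)^m$ combined with the explicit definitions \eqref{def.ph.ell.m.Re.Im}, \eqref{def.ph.ell.m}. The only thing to keep straight is the indexing convention that positive $m$ corresponds to the real parts and negative $m$ to the imaginary parts, which explains why the two cases have opposite parities.
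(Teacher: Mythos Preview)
Your proposal is correct and follows essentially the same approach as the paper: both arguments observe that $x_2\mapsto -x_2$ sends $(x_1+ix_2)^m$ to its complex conjugate, so the real part is preserved and the imaginary part changes sign, and then invoke \eqref{def.ph.ell.m.Re.Im}, \eqref{def.ph.ell.m}.
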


\begin{proof}
For $m=0$, the spherical harmonic $\ph_{\ell,0}$ in \eqref{def.ph.ell.m.Re.Im} 
does not depend on $x_2$, therefore it is even in $x_2$. 
For $m = 1, \ldots, \ell$, one has ${\overline w}^{\,m} = \overline{w^m}$ for all $w \in \C$,
that is, complex conjugation and $m$th power commute in $\C$, 
whence 
\begin{align*}
\Re \{ (x_1 - i x_2)^m \} 
= \Re \{ (x_1 + i x_2)^m \}, 
\quad \ 
\Im \{ (x_1 - i x_2)^m \} 
= - \Im \{ (x_1 + i x_2)^m \} 
\end{align*}
for all $(x_1, x_2) \in \R^2$, 
and, recalling \eqref{def.ph.ell.m.Re.Im},  
\[
\ph_{\ell,m}^{(\Re)}(x_1, -x_2, x_3) 
= \ph_{\ell,m}^{(\Re)}(x), 
\quad \ 
\ph_{\ell,m}^{(\Im)}(x_1, -x_2, x_3)
= - \ph_{\ell,m}^{(\Im)}(x)
\]
for all $x \in \S^2$. By \eqref{def.ph.ell.m} we obtain the thesis.
\end{proof}

If $\eta$ is even in $x_2$ and $\beta$ is odd in $x_2$,  
then, by Lemma \ref{lemma:parity.x2.ph.ell.m}, 
their coefficients satisfy
$\hat \eta_{\ell,m} = 0$ for $m < 0$ 
and $\hat \beta_{\ell,m} = 0$ for $m \geq 0$, 
and only coefficients $\hat \eta_{\ell,m}$ with $m \geq 0$ 
and $\hat \beta_{\ell,m}$ with $m < 0$ 
can be nonzero.

We put together the properties of parity with respect to $x_2$ and $x_3$, 
and define the two subspaces
\begin{align} 
X & := \{ f \in L^2(\S^2,\R) : f = \text{even}(x_2), \ \text{even}(x_3) \}, 
\notag \\
Y & := \{ f \in L^2(\S^2,\R) : f = \text{odd}(x_2), \ \text{even}(x_3) \}.
\label{def.X.Y}
\end{align}
Hence any $\eta \in X$, $\beta \in Y$ have expansion 
\begin{align}
& \eta = \sum_{(\ell,m) \in \mT_X} \hat \eta_{\ell,m} \ph_{\ell,m}, 
\quad 
\beta = \sum_{(\ell,m) \in \mT_Y} \hat \beta_{\ell,m} \ph_{\ell,m},
\label{stilo.01}
\\
& \mT_X 
:= \{ (\ell,m) \in \N_0^2 : 0 \leq m \leq \ell, 
\  \ell - m = \text{even} \}, 
\notag \\
& \mT_Y 
:= \{ (\ell,m) \in \N_0 \times \Z : - \ell \leq m \leq -1, 
\  \ell - m = \text{even} \}.
\notag
\end{align}
By Lemmas \ref{lemma:mF.Hs}, \ref{lemma:mF.x3} and \ref{lemma:mF.x2}, 
the domain and codomain of the map $\mF$ can be restricted to the subspaces 
$X \times Y$ and $Y \times X$ respectively, namely 
\begin{equation}  \label{mF.restricted}
\mF_{\res} : \R \times (U \cap (X \times Y))  
\to (H^{s}(\S^2) \times H^{s-\frac12}(\S^2)) \cap (Y \times X),
\end{equation} 
where the index ``$\res$'' indicates this restriction. 
The linearized operator $L_\res = \pa_u \mF_\res(\om,0)$ 
is $L$ restricted to $X \times Y$. 
The kernel of $L_\res$ is $V_\res := V \cap (X \times Y)$, 
its complement in $X \times Y$ is $W_\res := W \cap (X \times Y)$, 
the range of $L_\res$ is contained in $R_\res := R \cap (Y \times X)$, 
whose complement in $Y \times X$ is $Z_\res := Z \cap (Y \times X)$. 
We calculate 
\begin{align} 
V_\res 
& = V \cap (X \times Y) = \bigg\{ 
\begin{pmatrix} \eta \\ \beta \end{pmatrix} 
= \sum_{ \begin{subarray}{c} 
(\ell,m) \in S_\res 
\end{subarray}} 
\lm_{\ell,m} \begin{pmatrix} \ell \ph_{\ell,m} \\  - \om m \ph_{\ell,-m} \end{pmatrix}
: \lm_{\ell,m} \in \R \bigg\},
\label{V.res}
\\ 
S_\res 
& := \{ (\ell,m) \in S : \ell \geq 1, \, m \geq 0, \, \ell - m = \text{even} \},
\label{S.res}
\end{align}
and we note that among the 4 elements $(0,0), (1,0), (\ell_0, \pm m_0)$ of $S$ 
listed in Lemma \ref{lemma:bound.det.L.ell.j}, 
only $(\ell_0, m_0)$ belongs to $S_\res$. 
Hence, if $S$ contains only those 4 elements, 
then $V_\res$ is a 1-dimensional space,
and $\om$ in \eqref{om.fix} is a simple eigenvalue of $L_\res$. 
Now we check that Lemma \ref{lemma:L.inv} also holds on the restricted spaces 
$W^s_\res := W^s \cap (X \times Y)$, 
$R^s_\res := R^s \cap (Y \times X)$.

\begin{lemma} 
\label{lemma:L.inv.res}
The map $L_{\res}|_{W_\res^s} : W^s_\res \to R^s_\res$
is invertible, with bounded inverse satisfying estimate \eqref{inv.est.L} 
for all $(f,g) \in W^s_\res$. 
\end{lemma}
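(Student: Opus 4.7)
The plan is to obtain Lemma \ref{lemma:L.inv.res} as a direct corollary of Lemma \ref{lemma:L.inv}, by verifying that the explicit inversion formulas constructed there respect the two parity symmetries (in $x_2$ and in $x_3$) defining the restricted spaces. No new analytic estimate is needed: if the inverse $(L|_{W^s})^{-1}$ maps $R^s_\res$ into $W^s_\res$, then it automatically inherits the bound \eqref{inv.est.L}.

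First, I would translate the conditions ``$(f,g) \in R^s_\res \subset Y \times X$'' and ``$(\eta,\beta) \in W^s_\res \subset X \times Y$'' into conditions on Fourier coefficients using Lemma \ref{lemma:parity.x2.ph.ell.m} and Lemma \ref{lemma:parity.x3.ph.ell.m}: $f \in Y$ means $\hat f_{\ell,m}=0$ unless $m < 0$ and $\ell - m$ is even; $g \in X$ means $\hat g_{\ell,m}=0$ unless $m \geq 0$ and $\ell - m$ is even; the membership $\eta \in X$, $\beta \in Y$ is characterized analogously.

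Next, I would inspect the coefficient formulas provided in the proof of Lemma \ref{lemma:L.inv}, namely \eqref{inv.formula.outside.S} for $(\ell,m) \in \mT \setminus S$, formula \eqref{inv.formula.S.0} for $(\ell,m)=(0,0)$, and \eqref{inv.formula.S.geq.1.selected.W} for $(\ell,m) \in S$ with $\ell \geq 1$. In each case the coefficient $\hat \eta_{\ell,m}$ is a linear combination of $\hat f_{\ell,-m}$ and $\hat g_{\ell,m}$, while the coefficient $\hat \beta_{\ell,n}$ (with $n = -m$) is a linear combination of $\hat f_{\ell,n}$ and $\hat g_{\ell,-n}$. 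For data in $Y \times X$, a short case analysis on the sign of $m$ (respectively $n$) shows: $\hat \eta_{\ell,m}$ can be nonzero only when $m \geq 0$ and $\ell - m$ is even (vanishing identically for $m < 0$, and for $m=0$ when $\ell$ is odd, since then both $\hat f_{\ell,0}$ and $\hat g_{\ell,0}$ vanish); symmetrically, $\hat \beta_{\ell,n}$ can be nonzero only when $n < 0$ and $\ell - n$ is even. Thus $\eta \in X$ and $\beta \in Y$.

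Finally, since the inverse image $(\eta,\beta)$ of $(f,g) \in R^s_\res$ under $(L|_{W^s})^{-1}$ already lies in $W^s \cap (X \times Y) = W^s_\res$, the restricted map $L_\res|_{W^s_\res}$ is bijective onto $R^s_\res$, and the tame bound \eqref{inv.est.L} passes to the subspace verbatim. I expect no real obstacle here; the only thing to be careful about is the handling of the isolated cases $(0,0)$ and $(1,0)$ in $S$, where the coefficient selection inside $W$ (namely $\hat \beta_{0,0}=0$ and $\hat \beta_{1,0}=-\hat f_{1,0}=0$) must be checked to be compatible with the parity of $Y$, which it is.
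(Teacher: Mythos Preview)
Your proposal is correct and follows essentially the same approach as the paper: both reduce the lemma to Lemma \ref{lemma:L.inv} and then verify, via the explicit coefficient formulas \eqref{inv.formula.outside.S}, \eqref{inv.formula.S.0}, \eqref{inv.formula.S.geq.1.selected.W}, that the image $(\eta,\beta)$ of $(f,g)\in R^s_\res$ under $(L|_{W^s})^{-1}$ satisfies the parity constraints defining $X\times Y$. Your case analysis (sign of $m$, parity of $\ell-m$, and the special checks at $(0,0)$ and $(1,0)$) matches the paper's almost line by line.
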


\begin{proof}
Let $(f,g) \in R^s_\res = R^s \cap (Y \times X)$. 
By Lemma \ref{lemma:L.inv}, we already know that 
there exists a unique $(\eta, \beta) = (L|_{W^s})^{-1}(f,g) \in W^s$ 
such that $L(\eta, \beta) = (f,g)$, with inequality \eqref{inv.est.L}. 
We only have to prove that $(\eta, \beta) \in X \times Y$. 
The coefficients of $(\eta, \beta)$ are determined by those of $(f,g)$ 
by explicit formulas: they are given 
by \eqref{inv.formula.outside.S} 
for $(\ell,m) \in \mT \setminus S$, 
by \eqref{inv.formula.S.geq.1.selected.W}
for $(\ell,m) \in S$ with $\ell \geq 1$, 
and by \eqref{inv.formula.S.0} with $\hat \beta_{0,0} = 0$ 
for $(\ell,m) = (0,0)$. 
If $(\ell,m) \in \mT$ and $\ell - m$ is an odd integer, 
then $\hat f_{\ell,-m} = \hat g_{\ell,m} = 0$ because both $f$ and $g$ are even in $x_3$, 
and hence $\hat \eta_{\ell,m} = \hat \beta_{\ell,-m} = 0$ 
from the explicit formulas just mentioned. 
This implies that both $\eta$ and $\beta$ are even in $x_3$.
If $(\ell,m) \in \mT$ with $m < 0$, 
then $\hat g_{\ell,m} = 0$ because $g$ is even in $x_2$ 
and $\hat f_{\ell,-m} = 0$ because $-m > 0$ and $f$ is odd in $x_2$. 
Then, again from the explicit formulas, 
$\hat \eta_{\ell,m} = \hat \beta_{\ell,-m} = 0$ for $m < 0$.   
This implies that $\eta$ is even in $x_2$. 
Moreover, for $m=0$, one has $\hat f_{\ell,0} = 0$ because $f$ is odd in $x_2$, 
and therefore, from the explicit formulas, $\hat \beta_{\ell,0} = 0$. 
Hence $\beta$ is odd in $x_2$. 
\end{proof}

To obtain the bifurcation from a simple eigenvalue, 
it only remains to check the following transversality property. 
Recall that $\om$ is given by \eqref{om.fix}, and it is nonzero. 

\begin{lemma} 
Let $(\ell, m) \in S$, with $\ell \geq 1$ and $1 \leq |m| \leq \ell$. 
Let $\eta = \ell \ph_{\ell,m}$ and $\beta = - \om m \ph_{\ell, -m}$. 
Then the pair $(\pois \eta, \pois \beta)$ does not belong to $R$. 
\end{lemma}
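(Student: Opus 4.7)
The plan is to compute $\pois \eta$ and $\pois \beta$ explicitly in terms of the orthonormal basis $\{\ph_{\ell,m}\}$, and then test $(\pois \eta, \pois \beta)$ against a suitable element of the finite-dimensional space $Z$ in \eqref{def.Z}, which is the $L^2(\S^2)\times L^2(\S^2)$-orthogonal complement of $R$. Since $(\pois \eta, \pois \beta)$ belongs to $L^2(\S^2)\times L^2(\S^2)$, it lies in $R$ if and only if it is orthogonal to every element of $Z$; hence exhibiting a single element of $Z$ against which it pairs nontrivially will suffice.

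First I would apply Lemma \ref{lemma:block.diag.mS}, which gives $\pois \ph_{\ell,m} = -m\,\ph_{\ell,-m}$ for all $(\ell,m) \in \mT$. With $\eta = \ell\,\ph_{\ell,m}$ and $\beta = -\om m\,\ph_{\ell,-m}$ this yields
\begin{equation*}
\pois \eta = -\ell m\,\ph_{\ell,-m}, \qquad
\pois \beta = -\om m\,\pois \ph_{\ell,-m} = -\om m^2\,\ph_{\ell,m},
\end{equation*}
where I used $\pois \ph_{\ell,-m} = -(-m)\ph_{\ell,m} = m\,\ph_{\ell,m}$. Note also that $(\ell,-m) \in S$ whenever $(\ell,m)\in S$, since the Diophantine equation \eqref{dioph.eq.travelling} depends only on $m^2$.

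Next I would pair $(\pois \eta, \pois \beta)$ with the element of $Z$ associated to the index $(\ell,m) \in S$ with $\ell\geq 1$, namely
\begin{equation*}
\Phi_{\ell,m} := \begin{pmatrix} \om m\,\ph_{\ell,-m} \\ \ell\,\ph_{\ell,m} \end{pmatrix} \in Z.
\end{equation*}
Since the $\ph_{\ell,m}$ form an $L^2(\S^2)$-orthonormal basis,
\begin{equation*}
\bigl\la (\pois \eta, \pois \beta), \Phi_{\ell,m} \bigr\ra_{L^2\times L^2}
= (\om m)(-\ell m) + \ell(-\om m^2) = -2\om\ell m^2,
\end{equation*}
which is nonzero because $\om \neq 0$ (by \eqref{om.fix}), $\ell\geq 1$, and $m\neq 0$. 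Therefore $(\pois\eta,\pois\beta)$ is not $L^2$-orthogonal to $Z$, and consequently $(\pois \eta, \pois \beta) \notin R$.

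The proof is entirely computational once the orthogonal decomposition $L^2\times L^2 = R\oplus Z$ and the action of $\pois$ on the $\ph_{\ell,m}$ are in hand; no estimates or inversion arguments are needed. The only subtle point—easy to check but worth verifying—is that the element $\Phi_{\ell,m}$ is indeed in $Z$, which is clear from the description \eqref{def.Z} since $(\ell,m) \in S$ with $\ell\geq 1$. This transversality condition then combines with the simple-eigenvalue setup prepared in Lemmas \ref{lemma:L.inv.res}--\ref{lemma:parity.x2.ph.ell.m} to trigger Crandall--Rabinowitz bifurcation.
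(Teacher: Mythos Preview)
Your proof is correct and follows essentially the same approach as the paper: both compute $\pois\eta = -\ell m\,\ph_{\ell,-m}$ and $\pois\beta = -\om m^2\,\ph_{\ell,m}$ via Lemma~\ref{lemma:block.diag.mS}, then check this pair is not in $R$. The only cosmetic difference is that the paper verifies directly that $(-\ell m, -\om m^2)$ is not a scalar multiple of the $R$-generator $(1,\,-\om m\ell^{-1})$, whereas you pair against the corresponding element of $Z = R^\perp$; these are dual formulations of the same $2\times 2$ linear-algebra check yielding $2\om\ell m^2\neq 0$.
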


\begin{proof} 
By Lemma \ref{lemma:block.diag.mS}, one has $\pois \eta = - \ell m \ph_{\ell, -m}$
and $\pois \beta = - \om m^2 \ph_{\ell, m}$. 
Hence, by \eqref{def.R}, the pair $(\pois \eta, \pois \beta)$ 
belongs to $R$ if and only if 
\[
\begin{pmatrix} 
- \ell m \ph_{\ell, -m} \\
- \om m^2 \ph_{\ell, m} 
\end{pmatrix}
= \lm 
\begin{pmatrix} 
\ph_{\ell, -m}  \\ 
- \om m \ell^{-1} \ph_{\ell,m} 
\end{pmatrix}
\quad \exists \lm \in \R,
\]
and, since $\om \neq 0$, this is possible only for $m=0$. 
\end{proof}

By the classical results of bifurcation from a simple eigenvalue, 
we obtain the following result.

\begin{theorem} \label{thm:bif}
Let $\sigma_0 > 0$. 
Let $\ell_0, m_0$ be integers with $\ell_0 \geq 2$, $1 \leq m_0 \leq \ell_0$
and $\ell_0 - m_0$ even. 
Assume that the Diophantine equation in the unknowns $(\ell,m)$
\begin{equation} \label{dioph.eq.travelling.in.the.thm}
m_0^2 (\ell+2)(\ell-1)\ell = (\ell_0+2)(\ell_0-1)\ell_0 m^2
\end{equation} 
has only the solution $(\ell,m) = (\ell_0, m_0)$ in the finite set 
\begin{align}
T & = \big\{ (\ell,m) \in \Z^2 : 
1 \leq \ell \leq c_0, \ \ 
0 \leq m \leq \ell, \ \ 
\ell - m = \text{even} \big\}, 
\notag \\ 
c_0 & = (\ell_0+2)(\ell_0-1)\ell_0 m_0^{-2}.
\label{c0.in.the.thm}
\end{align}
Then the value 
\begin{equation} \label{om.fix.in.the.thm}
\om_* = \sqrt{\sigma_0} \frac{ \sqrt{(\ell_0+2)(\ell_0-1)\ell_0} }{m_0}
\end{equation}
for the angular velocity parameter $\om$ is a bifurcation point. 
For $k \geq 2$ integer, the set of nontrivial solutions of equation $\mF(\om,u) = 0$ 
near $(\om_*,0)$ in $\R \times (V_\res \oplus W^{k - \frac12}_\res)$ 
is a unique analytic curve with parametric representation 
on the 1-dimensional space $V_\res$.
\end{theorem}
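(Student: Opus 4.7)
The plan is to verify the hypotheses of the Crandall--Rabinowitz bifurcation theorem from a simple eigenvalue for the restricted map $\mF_\res$ in \eqref{mF.restricted}, and then quote the theorem to obtain the analytic curve of nontrivial solutions. Analyticity and smoothness of $\mF_\res$ in $(\om,u)$ comes from Lemma \ref{lemma:mF.Hs}, and the trivial solutions $\mF_\res(\om,0)=0$ are immediate. The work is therefore concentrated on three Fredholm-type properties of the linearized operator $L_\res=\pa_u\mF_\res(\om_*,0)$: one-dimensional kernel, codimension-one range, and the transversality of $\pa_\om\pa_u\mF_\res(\om_*,0)$ to the range along the kernel direction.

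For the kernel, I would observe that under the Diophantine hypothesis, the set $S_\res$ in \eqref{S.res} reduces to the single pair $(\ell_0,m_0)$: any solution $(\ell,m)$ of $\det L_{\ell,m}=0$ in $\mT$ with $\ell\ge 1$, $m\ge 0$, and $\ell-m$ even lies in $T$ of \eqref{c0.in.the.thm} by Lemma \ref{lemma:bound.det.L.ell.j}, and the equation forces $m=0\Rightarrow\ell\in\{0,1\}$, which is incompatible with the parity constraint, so only $(\ell_0,m_0)$ remains. Hence $V_\res$ in \eqref{V.res} is one-dimensional, spanned by
\[
u_0 := \bigl(\ell_0\,\ph_{\ell_0,m_0}\,,\ -\om_* m_0\,\ph_{\ell_0,-m_0}\bigr).
\]
A parallel parity check on $Z_\res=Z\cap(Y\times X)$ using Lemmas \ref{lemma:parity.x3.ph.ell.m}--\ref{lemma:parity.x2.ph.ell.m} shows that the vector $(\ph_{0,0},0)\in Z$ does not survive the restriction (since $\ph_{0,0}$ is even, not odd, in $x_2$), and the surviving generators of $Z$ correspond exactly to $(\ell,m)\in S$ with $\ell\ge 1$, $m>0$, $\ell-m$ even, which again reduces to $(\ell_0,m_0)$. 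Thus $Z_\res$ is one-dimensional, and Lemma \ref{lemma:L.inv.res} gives that $L_\res$ maps $W^s_\res$ isomorphically onto $R^s_\res$, whose codimension in the target space is exactly $\dim Z_\res=1$. This makes $L_\res$ a Fredholm operator of index zero with one-dimensional kernel.

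For the transversality condition, I would differentiate \eqref{def.L} in $\om$ to get $\pa_\om L(\eta,\beta)=(\pois\eta,\pois\beta)$, and evaluate on $u_0$ using Lemma \ref{lemma:block.diag.mS}:
\[
\pa_\om L_\res\,u_0 \;=\; \bigl(-\ell_0 m_0\,\ph_{\ell_0,-m_0}\,,\ -\om_* m_0^2\,\ph_{\ell_0,m_0}\bigr).
\]
If this vector belonged to $R$, then by the generator in \eqref{def.R} associated to $(\ell_0,m_0)\in S$, there would exist $\lambda\in\R$ with $\lambda=-\ell_0 m_0$ and $-\om_* m_0^2=-\lambda\,\om_* m_0\ell_0^{-1}=\om_* m_0^2$, which forces $\om_* m_0^2=0$ and contradicts $\om_*\neq 0$, $m_0\ge 1$. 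Hence $\pa_\om L_\res\,u_0\notin R$, and in particular not in $R_\res$, giving the transversality required by Crandall--Rabinowitz. Applying the theorem then yields a unique analytic curve of nontrivial solutions of $\mF_\res(\om,u)=0$ in a neighborhood of $(\om_*,0)$ inside $\R\times(V_\res\oplus W^s_\res)$, parametrized on $V_\res$.

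The main conceptual obstacle is the potential failure of the kernel to be one-dimensional: a priori, $S_\res$ could contain extra ``resonant'' pairs $(\ell,m)\ne(\ell_0,m_0)$ satisfying the Diophantine equation \eqref{dioph.eq.travelling.in.the.thm}, which would enlarge $V_\res$ and break the Crandall--Rabinowitz framework. This is exactly the arithmetic hypothesis of the theorem; the existence of infinitely many choices of $(\ell_0,m_0)$ for which the hypothesis holds (hence producing infinitely many bifurcation angular velocities $\om_*$) is a separate number-theoretic issue, to be handled by prime factorization arguments on $(\ell_0+2)(\ell_0-1)\ell_0$, but for Theorem \ref{thm:bif} itself the hypothesis is assumed, and the only technical verification is the parity bookkeeping above.
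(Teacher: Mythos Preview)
Your proposal is correct and follows essentially the same approach as the paper: verify that $S_\res=\{(\ell_0,m_0)\}$ under the Diophantine hypothesis so that $V_\res$ is one-dimensional, use Lemma \ref{lemma:L.inv.res} to get the Fredholm index-zero structure, and check transversality by showing $(\pois\eta,\pois\beta)\notin R$ on the kernel direction, exactly as in the unnamed lemma just before Theorem \ref{thm:bif}. Your write-up is somewhat more explicit than the paper's terse proof (in particular the parity bookkeeping for $Z_\res$), but the strategy and the computations coincide.
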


\begin{proof} 
Equation \eqref{dioph.eq.travelling.in.the.thm} is \eqref{dioph.eq.travelling},  
and \eqref{om.fix.in.the.thm} is \eqref{om.fix}. 
As observed in Lemma \ref{lemma:bound.det.L.ell.j}, 
equation \eqref{dioph.eq.travelling.in.the.thm} has no solution with $\ell > c_0$, 
and therefore the set $S_\res$ in \eqref{S.res} 
has only one element, the pair $(\ell_0, m_0)$.  
Recalling \eqref{def.L}, the mixed second derivative $\pa^2_{\om u} \mF(\om,0)$ 
is the operator $(\eta, \beta) \mapsto (\pois \eta, \pois \beta)$. 
Thus, by the analysis above, the thesis follows from 
a direct application of the classical theory of bifurcation 
from a simple eigenvalue; 
see, e.g., \cite{Ambrosetti.Prodi}, Theorem 4.1 in Section 5.4, 
and \cite{Buffoni.Toland}.  
The use of symmetries to obtain a simple eigenvalue is also contained, e.g.,  
in \cite{Ambrosetti.Prodi}, Sections 6.3 and 6.4.
\end{proof}

\subsection{Arithmetics of simple eigenvalues}

Using the prime factor decomposition of integers, 
it is not difficult to see that there exist
both pairs $(\ell_0, m_0)$ that satisfy the assumptions of Theorem \ref{thm:bif}
and pairs that do not satisfy them. 
By direct check, we have studied the following few cases of small integers. 

\begin{lemma} 
$(i)$ For $(\ell_0, m_0) =$ 
$(2,2)$, 
$(3,3)$, 
$(4,2)$, $(4,4)$, 
$(5,5)$, 
$(6,4)$, $(6,6)$,
$(7,5)$, $(7,7)$,
the assumptions of Theorem \ref{thm:bif} are satisfied, 
and hence the set $S_\res$ in \eqref{S.res} has only one element, 
the pair $(\ell_0, m_0)$ itself. 

$(ii)$ For $(\ell_0, m_0) =$ 
$(3,1)$, $(5,3)$, $(5,1)$, $(16, 16)$, 
the assumptions of Theorem \ref{thm:bif} are not satisfied,
and the corresponding set $S_\res$ is, respectively, 
\begin{alignat*}{2}
S_\res & = \{ (3,1), \, (10, 6), \, (16,12) \}, 
\quad & 
S_\res & = \{ (5,3), \, (8, 6) \}, 
\\ 
S_\res & = \{ (5,1), \, (8, 2), \, (126, 120) \},
\quad & 
S_\res & = \{ (16,16), \, (10, 8) \}. 
\end{alignat*}
\end{lemma}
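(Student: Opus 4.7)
The lemma is a purely arithmetic statement: for each listed pair $(\ell_0,m_0)$ one must compute the finite set
\[
S_\res=\Big\{(\ell,m)\in\N^2:\ 1\le\ell\le c_0,\ 0\le m\le\ell,\ \ell-m\text{ even},\ m_0^2\,\ell(\ell-1)(\ell+2)=\ell_0(\ell_0-1)(\ell_0+2)\,m^2\Big\},
\]
where $c_0=(\ell_0+2)(\ell_0-1)\ell_0\,m_0^{-2}$. The crucial reduction is already in Lemma \ref{lemma:bound.det.L.ell.j}: any solution satisfies $\ell\le c_0$, so the search is finite. My plan is therefore to fix $(\ell_0,m_0)$, list $c_0$, and for each integer $\ell\in\{1,\ldots,\lfloor c_0\rfloor\}$ compute
\[
q(\ell):=\frac{m_0^2\,\ell(\ell-1)(\ell+2)}{\ell_0(\ell_0-1)(\ell_0+2)}
\]
and check whether $q(\ell)$ is a perfect square $m^2$ with $m\le\ell$ and $\ell-m$ even; these are the elements of $S_\res$.

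For part (i) the relevant values are
\[
c_0(2,2)=2,\ c_0(3,3)=\tfrac{10}{3},\ c_0(4,2)=18,\ c_0(4,4)=\tfrac{9}{2},\ c_0(5,5)=\tfrac{28}{5},\ c_0(6,4)=15,\ c_0(6,6)=\tfrac{20}{3},\ c_0(7,5)=\tfrac{378}{25},\ c_0(7,7)=\tfrac{54}{7},
\]
so each enumeration involves at most $18$ values of $\ell$. I expect each pair to be dispatched in a few lines: for the ``diagonal'' cases $(\ell_0,\ell_0)$, the denominator $\ell_0^2$ already forces strong divisibility constraints that reduce the candidate $\ell$'s to a handful, and for $(4,2),(6,4),(7,5)$ the direct check on the integer range $1\le\ell\le 18$ shows $q(\ell)$ is a perfect square only at $\ell=\ell_0$. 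For part (ii), I would carry out the same enumeration and exhibit the additional solutions listed in the statement; for $(3,1)$ with $c_0=30$, $(5,3)$ with $c_0=\tfrac{140}{9}$, and $(16,16)$ with $c_0=\tfrac{135}{8}$, the ranges are again short. Finally, for each solution $(\ell,m)$ produced, the parity condition $\ell-m$ even is checked to decide inclusion in $S_\res$ (e.g.\ $\ell=1$, $m=0$ always solves the equation trivially but is discarded when $\ell_0$ is even, since then $1-0=1$ is odd).

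The only case that requires more than a direct tabulation is $(5,1)$, for which $c_0=140$. Here I would organize the argument by factoring $140=2^2\cdot5\cdot7$ and using the pairwise coprimality $\gcd(\ell,\ell-1)=1$, $\gcd(\ell,\ell+2)\mid 2$, $\gcd(\ell-1,\ell+2)\mid 3$: after removing the obvious square factor, the equation $\ell(\ell-1)(\ell+2)=140\,m^2$ forces $35$ to divide one of the three factors (up to sharing of the primes $5$ and $7$ between coprime factors), which restricts $\ell\bmod 5$ and $\ell\bmod 7$ to a small list of residues; combined with the condition that the resulting quotient be a square, this leaves only $\ell\in\{5,8,126\}$, giving $S_\res=\{(5,1),(8,2),(126,120)\}$ as claimed.

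The main ``obstacle'' is not mathematical but bookkeeping: writing the $(5,1)$ enumeration cleanly via the coprimality-and-residue reduction, rather than a brute list of $140$ entries. All the other cases reduce to a few lines of arithmetic verification once $c_0$ is written down, and the lemma follows by assembling these checks.
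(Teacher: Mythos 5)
Your approach — enumerate $\ell\in\{1,\ldots,\lfloor c_0\rfloor\}$ using the bound from Lemma \ref{lemma:bound.det.L.ell.j}, test whether $q(\ell)$ is a perfect square, and keep only pairs with $\ell-m$ even — is exactly the intended argument, and the paper's own "proof" says nothing more than that it is elementary and uses prime factorization. Your tabulated values of $c_0$ are correct, and I spot-checked several cases (e.g.\ $(4,2)$ with $c_0=18$, $(3,1)$ with $c_0=30$, $(16,16)$ with $c_0=135/8$, $(5,1)$ with $c_0=140$) against the asserted $S_\res$ and they all match. One small slip: you write that $(\ell,m)=(1,0)$ ``is discarded when $\ell_0$ is even, since then $1-0=1$ is odd.'' The parity filter $\ell-m\equiv 0\pmod 2$ in the definition \eqref{S.res} of $S_\res$ does not depend on $\ell_0$ at all; $(1,0)$ is always excluded because $1-0=1$ is odd, irrespective of $\ell_0$. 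This does not affect any of your conclusions, but the stated reason is wrong. Otherwise the plan is sound; the only thing missing relative to a fully written-out proof is actually performing the finite checks, which is also what the paper omits.
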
 

\begin{proof} 
Using the prime factor decomposition, the proof is a bit long but completely elementary. 
\end{proof}

As the previous lemma shows, it seems hard to guess a simple criterion 
that determines whether a given pair $(\ell_0, m_0)$ satisfies the assumptions of Theorem \ref{thm:bif}. 
Nonetheless, again using prime numbers, we can prove that 
there are infinitely many pairs $(\ell_0, m_0)$ that satisfy those assumptions.

\begin{proposition} \label{prop:infinity-family}
$(i)$ For every prime integers
$2 < p_1 < p_2 < \dots < p_n$, $n \geq 1$, 
given $\ell_0 = p_1 p_2 \cdots p_n$, 
the pair $(\ell_0, m_0) = (\ell_0, \ell_0)$  
satisfies the assumptions of Theorem \ref{thm:bif}. 

$(ii)$ For every prime integer $\ell_0 \geq 11$, 
the pair $(\ell_0, m_0) = (\ell_0, \ell_0 - 2)$  
satisfies the assumptions of Theorem \ref{thm:bif}. 

$(iii)$ For every prime integer $p>3$, given $\ell_0 = 2p$, 
the pair $(\ell_0, m_0) = (\ell_0, \ell_0)$  
satisfies the assumptions of Theorem \ref{thm:bif}. 

In particular, there are infinitely many pairs $(\ell_0, m_0)$ satisfying 
the assumptions of Theorem \ref{thm:bif}.
\end{proposition}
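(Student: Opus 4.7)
My plan is to verify (i), (ii), (iii) separately by arithmetic analysis of the Diophantine equation
\[
m_0^2 (\ell+2)(\ell-1)\ell = (\ell_0+2)(\ell_0-1)\ell_0\, m^2
\]
over the finite set $T$ in \eqref{c0.in.the.thm}. In each case the parity condition ``$\ell_0-m_0$ even'' is immediate from the form of $\ell_0, m_0$, and a short computation shows $c_0 = (\ell_0+2)(\ell_0-1)\ell_0/m_0^2$ is not much larger than $\ell_0$, confining $\ell$ to a small range. The central trick is then to reduce the equation modulo the prime divisors of $\ell_0$ (and, in case (iii), of $m_0$); this restricts $\ell$ to a handful of residues, and each residue is dispatched either by the parity constraint $\ell-m$ even or by a Bezout-type divisibility obstruction showing that the value of $m^2$ forced by the equation is not a perfect square (in fact, usually not even an integer).

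Parts (i) and (iii) follow a common template because in both $m_0 = \ell_0$. In (i), with $\ell_0 = p_1\cdots p_n$ squarefree and odd, $\gcd(\ell_0,(\ell_0+2)(\ell_0-1))=1$ lets one cancel $\ell_0$ to obtain $\ell_0(\ell+2)(\ell-1)\ell = (\ell_0+2)(\ell_0-1)m^2$; then $\ell_0\mid m^2$, and squarefreeness gives $\ell_0\mid m$. Since $c_0 = \ell_0+1-2/\ell_0 < \ell_0+1$, we have $m\leq\ell\leq\ell_0$, forcing $m\in\{0,\ell_0\}$; the value $m=0$ yields $\ell=1$ which fails parity, and $m=\ell_0$ yields $\ell=\ell_0$, the trivial solution. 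In (iii), with $\ell_0 = m_0 = 2p$ and $p>3$ prime, dividing by $4p$ gives $p(\ell+2)(\ell-1)\ell = (p+1)(2p-1)m^2$; since $\gcd(p,(p+1)(2p-1))=1$, $p\mid m$. Writing $m=pk$ and using $c_0<2p+1$ gives $k\in\{0,1,2\}$; the boundary values produce $(1,0)$ (parity fails) and the trivial solution, while for $k=1$ the parity constraint forces $\ell$ odd, so that together with $\ell\geq m=p$ and the mod-$p$ reduction $p\mid(\ell+2)(\ell-1)\ell$ only $\ell=p$ survives, and substituting it in the equation yields $p^2=-1$.

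Case (ii) is the most delicate. With $\ell_0=p\geq 11$ prime and $m_0=p-2$, the elementary inequality $p^2-9p+10>0$ (valid for $p\geq 8$) gives $c_0<2p$, so $\ell\leq 2p-1$. Reducing $(p-2)^2(\ell+2)(\ell-1)\ell = p(p+2)(p-1)m^2$ modulo $p$ forces $\ell\in\{1,p-2,p,p+1,2p-2\}$; the cases $\ell=1$ (parity) and $\ell=p$ (trivial solution) are immediate. For the remaining three I solve for $m^2$ explicitly and rule out each by divisibility. When $\ell=p-2$ the equation reads $(p+2)(p-1)m^2 = (p-2)^3(p-3)$; since $p$ is odd, $\gcd(p+2,p-2)\mid 4$ gives $\gcd(p+2,(p-2)^3)=1$, and $\gcd(p+2,p-3)\mid 5$, so $\gcd(p+2,(p-2)^3(p-3))\mid 5$ and $p+2\leq 5$, contradicting $p\geq 11$. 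When $\ell=p+1$ the equation becomes $(p+2)(p-1)m^2 = (p-2)^2(p+3)(p+1)$ with $\gcd(p+2, (p-2)^2(p+3)(p+1))=1$, impossible. When $\ell=2p-2$ it reduces to $(p+2)m^2 = 4(p-2)^2(2p-3)$, and $\gcd(p+2,2p-3) = \gcd(p+2,7)$ forces $p+2\leq 7$, again contradicting $p\geq 11$. The infinite family claim then follows at once from (i) with $n=1$ ranging over all odd primes.

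The main obstacle is the analysis in case (ii), where after the mod-$p$ reduction one faces three separate nontrivial residues, each handled by a different coprimality argument. The unifying inputs are the gcd identities $\gcd(p+2,p-2)\mid 4$, $\gcd(p+2,p+3)=\gcd(p+2,p+1)=1$, $\gcd(p+2,p-3)\mid 5$, $\gcd(p+2,2p-3)\mid 7$, together with the parity observation that $p+2$ is odd so the factor $4$ is coprime to $p+2$. By contrast, (i) and (iii) are technically simpler because the choice $m_0=\ell_0$ lets $\ell_0$ be factored out of the equation and reduces everything to a clean squarefree/prime divisibility question.
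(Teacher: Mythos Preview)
Your proof is correct and follows the same overall strategy as the paper: reduce modulo the prime divisors of $\ell_0$ to confine $\ell$ to a short list, then eliminate each candidate by a divisibility or parity obstruction. A few differences in execution are worth noting. In part (ii) at $\ell=\ell_0-2$, the paper instead observes that $\gcd(\ell_0-2,(\ell_0+2)(\ell_0-1))=1$ forces $(\ell_0-2)^3\mid m^2$, whence $m^2\geq(\ell_0-2)^3>(\ell_0-2)^2=\ell^2\geq m^2$; your $\gcd(p+2,\cdot)\mid 5$ argument is an equally valid alternative. Your bound $c_0<2p$ in (ii) is slightly weaker than the paper's (which uses $c_0\leq\varphi(11)\ell_0$ with $\varphi(11)+2/11<2$ to force $b=1$ directly in the case $\ell_0\mid\ell+2$), so you must treat the extra residue $\ell=2p-2$; in fact one can check $c_0<2p-2$ for $p\geq11$, so that case is vacuous, though your handling of it is correct. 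The most interesting difference is in (iii): for $m=p$ the paper checks five candidate values $\ell\in\{p-2,p,p+1,2p-2,2p\}$ one by one, whereas you note that $m=p$ odd together with the constraint $\ell-m$ even forces $\ell$ odd, so with $\ell\geq m=p$ only $\ell=p$ survives the mod-$p$ reduction --- a genuine streamlining. You are also more explicit than the paper about the boundary case $m=0$, $\ell=1$ (excluded from $T$ by parity), which the paper silently assumes away by writing ``$b\geq1$'' or ``$\kappa\geq1$''.
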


\begin{proof}
$(i)$ For $m_0 = \ell_0 > 0$, equation \eqref{dioph.eq.travelling.in.the.thm} becomes 
\begin{equation} \label{dioph.3.0}
\ell_0 (\ell+2)(\ell-1)\ell = (\ell_0+2)(\ell_0-1) m^2,
\end{equation} 
and $c_0$ in \eqref{c0.in.the.thm} becomes 
$c_0 = (\ell_0^2 + \ell_0 - 2) \ell_0^{-1} < \ell_0 + 1$. 
Assume that $(\ell,m) \in T$ solves \eqref{dioph.3.0}. 
Recall that $\ell_0$ is the product $p_1 \cdots p_n$, 
and observe that, for any $i \in \{1,\dots, n\}$, 
$p_i$ divides neither $\ell_0+2$ nor $\ell_0-1$, since $p_1>2$. 
Hence, for all $i$, the prime $p_i$ must divide $m^2$, and therefore $m$. 
Thus $m = \kappa \ell_0$ for some integer $\kappa \geq 1$. 
Since $m \leq \ell \leq c_0 < \ell_0+1$, we immediately have $\kappa=1$. 
Hence $\ell_0 = m \leq \ell < \ell_0 + 1$, whence $\ell = \ell_0$. 

$(ii)$ For $m_0 = \ell_0 - 2 > 0$, equation \eqref{dioph.eq.travelling.in.the.thm} becomes 
\begin{equation}\label{dioph.eq.cor}
(\ell_0-2)^2 (\ell +2)(\ell-1)\ell = (\ell_0 +2)(\ell_0-1)\ell_0 m^2,
\end{equation}
and $c_0$ in \eqref{c0.in.the.thm} becomes 
$c_0 = \ph(\ell_0) \ell_0$, where $\ph$ is the function 
$\ph(x) = (x+2)(x-1) (x-2)^{-2} 
= 1 + 5 (x-2)^{-1} + 4 (x-2)^{-2}$, decreasing in $x \in (2, \infty)$. 
For $\ell_0 \geq 11$, one has $c_0 = \ph(\ell_0) \ell_0 \leq \ph(11) \ell_0$, 
and $\ph(11) = 130/81 < 2$.  
Assume that $(\ell,m) \in T$ solves \eqref{dioph.eq.cor},  
and let $\ell_0 \geq 11$ be prime. Then, by \eqref{dioph.eq.cor}, 
$\ell_0$ must divide one of the factors $\ell-1$, $\ell$, $\ell+2$. 
We consider the three cases.

{\it Case one.} Assume that $\ell_0$ divide $\ell$. 
Hence $\ell= b \ell_0$ for some integer $b \geq 1$. 
Since $(\ell,m) \in T$, one has 
$b \ell_0 = \ell \leq c_0 \leq \ph(11) \ell_0 < 2 \ell_0$, 
whence $b=1$. Then $\ell = \ell_0$, which is the trivial solution.

{\it Case two}. Assume that $\ell_0$ divides $\ell-1$. 
Hence $\ell-1 = b \ell_0$ for some integer $b \geq 1$. 
Since $(\ell,m) \in T$, one has 
$b \ell_0 < b \ell_0 + 1 = \ell \leq c_0 \leq \ph(11) \ell_0 < 2 \ell_0$, 
whence $b=1$. Then $\ell = \ell_0 + 1$, and \eqref{dioph.eq.cor} becomes
\begin{equation} \label{dioph.case.two}
(\ell_0-2)^2 (\ell_0 + 3) (\ell_0 + 1) = (\ell_0 +2)(\ell_0-1) m^2.
\end{equation}
Now $\gcd(\ell_0 + 2, \ell_0 + 1) = 1$ 
and $\gcd( \ell_0 + 2, \ell_0 + 3) = 1$ 
(consecutive integers). 
Also, $\gcd(\ell_0 + 2, \ell_0 - 2) \in \{1, 2, 4\}$ 
(their difference is 4), but $\ell_0 \pm 2$, like $\ell_0$, is odd. 
Hence $\gcd(\ell_0 + 2, \ell_0 - 2) = 1$. 
As a consequence, any divisor of $\ell_0 + 2$ divides the RHS of \eqref{dioph.case.two}
and it does not divide the LHS of \eqref{dioph.case.two}, a contradiction.

{\it Case three}. Assume that $\ell_0$ divides $\ell+2$. 
Hence $\ell+2 = b \ell_0$ for some integer $b \geq 1$. 
Since $(\ell,m) \in T$, one has 
$b \ell_0 - 2 = \ell \leq c_0 \leq \ph(11) \ell_0$, 
whence $b \ell_0 \leq \ph(11) \ell_0 + 2$ and 
$b \leq \ph(11) + (2/\ell_0) \leq \ph(11) + (2/11) < 2$.
Therefore $b=1$. 
Then $\ell = \ell_0 - 2$, and \eqref{dioph.eq.cor} becomes
\begin{equation} \label{dioph.case.three}
(\ell_0-2)^3 (\ell_0 - 3) = (\ell_0 +2)(\ell_0-1) m^2.
\end{equation}
Now since  $\ell_0$ is odd we have by arguing as above $\gcd(\ell_0-2, \ell_0+2)=1$. 
Moreover $\gcd(\ell_0-2, \ell_0-1)=1$ (consecutive integers).   
Hence $(\ell_0-2)^3$ divides $m^2$, 
namely $m^2 = b (\ell_0-2)^3$ for some integer $b \geq 1$. 
Thus $(\ell_0 - 2)^3 \leq b (\ell_0 - 2)^3 = m^2 \leq \ell^2 = (\ell_0 - 2)^2$, 
but this is impossible for $\ell_0 > 3$.

$(iii)$ As observed above, 
for $m_0 = \ell_0 > 0$, equation \eqref{dioph.eq.travelling.in.the.thm} becomes \eqref{dioph.3.0},
and $c_0$ in \eqref{c0.in.the.thm} becomes 
$c_0 = (\ell_0^2 + \ell_0 - 2) \ell_0^{-1} < \ell_0 + 1$. 
Let $m_0 = \ell_0 = 2 p$, with $p$ prime. 
Then \eqref{dioph.3.0} becomes 
\begin{equation} \label{dioph.3.3}
p (\ell+2)(\ell-1)\ell = (p+1)(2p-1) m^2.
\end{equation} 
Assume that $(\ell,m) \in T$ solves \eqref{dioph.3.3}. 
Since $p$ is prime, $\gcd(p, 2p-1) = 1$, and $\gcd(p, p+1)=1$, 
we have that $p$ divides $m^2$ and therefore $m$. 
Hence $m = \kappa p$ for some integer $\kappa \geq 1$, 
and, since $m \leq \ell \leq c_0 < \ell_0 + 1 = 2p+1$, 
one has $\kappa \in \{ 1, 2 \}$. 
If $\kappa = 2$, then $m = 2p = \ell_0$, 
and $\ell_0 = m \leq \ell \leq c_0 < \ell_0 + 1$, 
whence $\ell = \ell_0$. Therefore $(\ell,m) = (\ell_0, m_0)$, 
which is the trivial solution. 
It remains to study the case $\kappa = 1$, i.e., $m = p$. 
For $m=p$, \eqref{dioph.3.3} gives
\begin{equation} \label{dioph.3.4}
(\ell+2) (\ell-1) \ell = (p+1) (2p-1) p.
\end{equation}
Since $p$ is prime and $p > 3$, we have that $p$ divides exactly one of the three factors 
on the LHS of \eqref{dioph.3.4}. We consider the three cases. 

\textit{Case one.} Assume that $p$ divides $\ell$. 
Hence $\ell \in \{ p, 2p \}$ because $\ell \leq c_0 < 2p+1$. 
Identity \eqref{dioph.3.4} with $\ell = p$ gives $p^2 + 1 = 0$, a contradiction; 
\eqref{dioph.3.4} with $\ell = 2p$ gives $4 = 1$, a contradiction. 

\textit{Case two.} Assume that $p$ divides $\ell-1$. 
Hence $\ell - 1$ is an integer multiple of $p$, 
and, since $\ell - 1 \leq c_0 - 1 < \ell_0 = 2p$, 
one has $\ell - 1 = p$. 
Plugging $\ell = p+1$ into \eqref{dioph.3.4} gives $p=4$, which is not a prime number, 
a contradiction. 

\textit{Case three.} Assume that $p$ divides $\ell+2$. 
Hence $\ell+2 \in \{ p, 2p \}$ because $\ell + 2 \leq c_0 + 2 < 2p+3$ $< 3p$. 
For $\ell + 2 = p$, \eqref{dioph.3.4} gives $(p+7)(p-1) = 0$, a contradiction. 
For $\ell + 2 = 2p$, \eqref{dioph.3.4} gives $6 p^2 - 21 p + 13 = 0$,
but this polynomial has no integer root, a contradiction.
\end{proof}

\bigskip

\emph{Statements and Declarations.} 
The authors state that there is no conflict of interest. 
No data was used for the research described in the article.

\begin{flushright}

\textbf{Pietro Baldi}

Dipartimento di Matematica e Applicazioni ``R. Caccioppoli''

University of Naples Federico II

Via Cintia, Monte Sant'Angelo, 80126 Naples, Italy

pietro.baldi@unina.it

ORCID 0000-0002-9644-3935

\medskip

\textbf{Vesa Julin} 

Department of Mathematics and Statistics

University of Jyv\"askyl\"a

P.O. Box 35, 40014 Jyv\"askyl\"a, Finland

vesa.julin@jyu.fi

ORCID 0000-0002-1310-4904

\medskip

\textbf{Domenico Angelo La Manna}

Dipartimento di Matematica e Applicazioni ``R. Caccioppoli''

University of Naples Federico II

Via Cintia, Monte Sant'Angelo, 80126 Naples, Italy

domenicoangelo.lamanna@unina.it

ORCID 0000-0003-1900-2025

\end{flushright}
\end{document}